\newcommand{\sign}{\mathrm{sign}}
\newcommand{\PP}{\mathbb{P}}
\newcommand{\EE}{{\mathbb{E}}}
\newcommand{\eps}{\varepsilon}
\newcommand{\FF}{\mathcal{F}}
\newcommand{\XX}{\mathcal{X}}
\newcommand{\YY}{\mathcal{Y}}
\newcommand{\EEE}{\mathcal{E}}
\renewcommand{\phi}{\varphi}
\newcommand{\given}{\,|\,}
\newcommand{\K}{{K}}
\newcommand{\pp}{\tilde p}
\def\BState{\State\hskip-\ALG@thistlm}
\newtheorem{theorem}{Theorem}[section]
\newtheorem{lemma}[theorem]{Lemma}
\newtheorem{prop}[theorem]{Proposition}
\newtheorem{corollary}[theorem]{Corollary}
\theoremstyle{definition}
\newtheorem{remark}[theorem]{Remark}
\numberwithin{equation}{section}
\begin{document}

\begin{frontmatter}
\title{Adaptive distributed methods under communication constraints}
\runtitle{Adaptive distributed estimation}

\begin{aug}
\author{\fnms{Botond} \snm{Szab\'o}\thanksref{t1,t2}\ead[label=e1]{b.t.szabo@math.leidenuniv.nl}},
\and
\author{\fnms{Harry} \snm{van Zanten}\thanksref{t1}\ead[label=e2]{hvzanten@uva.nl}}

\thankstext{t1}{Research supported by the Netherlands Organization of Scientific Research NWO.}
\thankstext{t2}{The research leading to these results has received funding from the European Research Council under ERC Grant Agreement 320637.}
\runauthor{Szab\'o and Van Zanten}

\affiliation{Leiden University and University of Amsterdam}

\address{Mathematical Institute\\
Leiden University\\
Niels Bohrweg 1\\
2333 CA Leiden\\
The Netherlands\\
\printead{e1}\\
}

\address{Korteweg-de Vries Institute for Mathematics\\
University of Amsterdam\\
Science Park 107\\
1098 XG Amsterdam\\
The Netherlands\\
\printead{e2}}
\end{aug}

\begin{abstract}
We study distributed estimation methods under communication constraints in 
a distributed version of the nonparametric random design regression model. 
We derive minimax lower bounds and exhibit methods that attain those bounds. 
Moreover, we show that adaptive estimation is possible in this setting. 
\end{abstract}



\end{frontmatter}

\section{Introduction}

In this paper we take up the study of the fundamental possibilities and
limitations of distributed methods for high-dimensional, or nonparametric 
problems. The design and study of such methods has attracted substantial 
attention recently. This is for a large part motivated  by the ever increasing size of 
datasets, leading to the necessity to analyze data while distributed over
multiple machines and/or cores. Other reasons to consider distributed 
methods include privacy considerations or the simple fact that in some 
situations data are physically collected at multiple locations. 

By now a variety of methods are available for estimating nonparametric or
high-dimensional models to data in a distributed manner. A (certainly incomplete) 
list of recent references includes the papers \cite{scott:etal:2013,zhang:2013,kleiner:2014,deisenroth:2015,shang:cheng:2015,srivastava:2015a,Guhaniyogi:2017, battey:2018,jason:2017}. 
Some of these papers propose new methods, some study theoretical aspects of such methods,  and some do both.
The number of  theoretical papers on the fundamental performance of distributed methods
is still rather limited however. In the paper \cite{szabo:zanten:2017} we recently introduced
a distributed version of the canonical signal-in-white-noise model 
to serve as a benchmark model to study aspects like convergence rates
and optimal tuning of distributed methods. We used it to compare the 
performance of a number of distributed nonparametric methods recently
introduced in the literature. The study illustrated the intuitively obvious fact 
that in order to achieve an optimal bias-variance trade-off, or, equivalently, 
to find the correct balance between over- and under-fitting, distributed 
methods need to be tuned differently than methods that handle all data at once.
Moreover, our comparison showed that some of the proposed methods 
are more successful at this {than} others. 

A major challenge and fundamental question for nonparametric distributed 
methods is whether or not it is possible to achieve a form of adaptive
inference. In other words, whether we can design methods that do automatic, data-driven
tuning in order to achieve the optimal bias-variance trade-off. 
We illustrated by example in \cite{szabo:zanten:2017} that naively using 
methods that are known to achieve optimal adaptation in  non-distributed settings, 
can lead to sub-optimal performance in the distributed case. 
In the recent paper \cite{zhu:2018}, which considers the same distributed signal-in-white-noise
model and was written independently and at the same time as the present paper, 
it is in fact conjectured that adaptation in the considered particular distributed model is not possible.

In order to study convergence rates and adaptation for distributed methods
in a meaningful way the class of methods should be restricted somehow. 
Indeed, if there is no limitation on communication or computation, then we 
could simply communicate all data from the various local machines to a central machine, 
aggregate it, and  use some existing adaptive, rate-optimal procedure. 
In this paper we consider a setting in which the communication between the local and the global 
machines is restricted, much in the same way as the communication restrictions 
imposed in \cite{zhang:2013} in a parametric framework and recently in the 
simultaneously written paper \cite{zhu:2018}
in the context of the distributed signal-in-white-noise model we introduced in  \cite{szabo:zanten:2017}.

In the distributed nonparametric regression model with communication {constraints} 
that we consider we can derive
minimax lower bounds for the best possible rate that any distributed 
procedure can achieve under smoothness conditions on the true regression function.
Technically this essentially relies on an extension of the information 
theoretic approach of \cite{zhang:2013} to the infinite-dimensional setting (this is 
different from the approach taken in \cite{zhu:2018}, which relies on 
results from \cite{wang:2010}).
It turns out there are different regimes, depending on how much communication 
is allowed. On the one extreme end, and in accordance with intuition, 
if enough communication is allowed, it is possible to achieve the same 
convergence rates in the distributed setting as in the non-distributed case. 
The other extreme case is that there is so little communication allowed that 
combining different machines does not help. Then the optimal rate under the 
communication restriction can already be obtained by just using a single 
local machine and discarding the others. The interesting case is the 
intermediate regime. For that case we show there exists an optimal strategy 
that involves grouping the machines in a certain way and letting them 
work on different parts of the regression function.

These first results on rate-optimal distributed estimators are not adaptive, 
in the sense that the optimal procedures depend on the  regularity of the 
unknown regression function. The same holds true for the {procedure 
obtained} in parallel in \cite{zhu:2018}. 
In this paper we go a step further and show that 
contrary perhaps to intuition, and contrary
to the conjecture in \cite{zhu:2018},  adaptation is in fact possible.
Indeed, we exhibit in this paper an adaptive distributed method
which involves a very specific grouping of the local machines, in combination 
with a Lepski-type method that is carried out in the central machine.
We prove that the resulting distributed estimator
adapts to a range of smoothness levels of the unknown regression function and that, up to logarithmic factors, it attains the minimax lower bound.

Although our analysis is  theoretical, we believe it contains interesting
messages that are ultimately very relevant for the development of applied 
distributed methods in high-dimensional settings. 
First of all, we show that depending on the communication budget, it might be
advantageous to group local machines and let different groups work on 
different aspects of the high-dimensional object of interest. Secondly, 
we show that it is possible to have adaptation in communication restricted 
distributed settings, i.e.\
to have data-driven tuning that automatically achieves the correct bias-variance
trade-off. We note however that although our proof of this fact is constructive, 
the method we exhibit appears to be still rather unpractical. We view our adaptation result
primarily as a first proof of concept, that hopefully invites the development
of more practical adaptation techniques for distributed settings.

\subsection{Notations}
For two positive sequences $a_n,b_n$ we use the notation $a_n\lesssim b_n$ if there exists an universal positive constant $C$ such that $a_n\leq C b_n$. Along the lines $a_n\asymp b_n$ denotes that $a_n\lesssim b_n$ and $b_n\lesssim a_n$ hold simultaneously. Furthermore we write $a_n\ll b_n$ if $a_n/b_n=o(1)$. Let us denote by $\lceil a\rceil$ and $\lfloor a \rfloor $ the upper and lower integer value of the real number $a$, respectively. The sum $\sum_{i=a}^b x_j$ for $a,b$ positive real number denotes the sum $\sum_{i\in\mathbb{N}: a\leq i \leq b}x_j$. For a set $A$ let $|A|$ denote the size of the set. For $f\in L_2[0,1]$ we denote the standard $L_2$-norm as $\|f\|_2^2=\int_0^1 f(x)^2dx$, while for bounded functions $\|f\|_{\infty}$ denotes the $L_\infty$-norm. The function $\sign:\, \mathbb{R}\mapsto \{0,1\}$ evaluates to {$0$}  on $(-\infty, 0)$ 
and {$1$} on $[0,\infty)$. Furthermore, we use the notation 
$\text{mean}\{a_1,\ldots,a_n\}=(a_1+\ldots+a_n)/n$. Throughout the paper, $c$ and $C$ denote global constants whose value may change one line to another.


\section{Main results}
We work  with the distributed version of the 
random design regression model. 
We assume that we have $m$ `local' machines and in the $i$th machine we observe pairs of random variables $(T^{(i)}_{\ell},X^{(i)}_{\ell})$, $\ell=1,...,n/m$, (with $n/m\in\mathbb{N}$) satisfying 
\begin{align}\label{model: regression}
&X^{(i)}_\ell=f_0(T^{(i)}_\ell)+\sigma \eps_{\ell}^{(i)},\quad \text{where}\\
& T^{(i)}_{\ell}\stackrel{iid}{\sim} U(0,1), \eps_{\ell}^{(i)}\stackrel{iid}{\sim}N(0,1),\quad \ell=1,...,n/m,\,i=1,...,m,\nonumber
\end{align}
and $f_0 \in L_2[0,1]$ (which is 
the same for all machines) is the unknown functional parameter of interest. 
We denote the local distribution and  expectation corresponding to the $i$th machine in \eqref{model: regression} by $\mathbb{P}_{f_0,T}^{(i)}$ and $\mathbb{E}_{f_0,T}^{(i)}$, respectively, and the joint distribution and expectation over all machines $i=1,...,m$, by $\mathbb{P}_{f_0,T}$ and $\mathbb{E}_{f_0,T}$, respectively. 
We assume that the total sample size $n$ is known to every local machine.
For our theoretical results we will assume that the unknown true function $f_0$ belongs to some regularity class. We work in our analysis with Besov smoothness classes, more specifically we assume that 
for some degree of smoothness $s> 0$ we have 
$f_0\in B_{2,\infty}^s(L)$ or $f_0\in B_{\infty,\infty}^s(L)$. The first class is of Sobolev type, while the second one is of  H\"older type with minimax estimation rates $n^{-s/(1+2s)}$ and $(n/\log n)^{-s/(1+2s)}$, respectively. For precise definitions, see  Section \ref{sec: wavelets} in the supplementary material \cite{suppl: szabo:zanten:2018}. Each local machine carries out (parallel to the others) a local statistical procedure and transmits the results to a central machine, which produces an estimator for  the signal $f_0$ by somehow 
aggregating the messages received from the local machines.

We study these distributed procedures under communication constraints between the local machines 
and the central machine. We allow each local 
machine to send at most $B^{(i)}$ bits on average to the central machine. More formally, a distributed estimator $\hat f$ is a measurable function of 
$m$ binary strings, or messages,  passed down from the local machines to the central machine. We denote 
by $Y^{(i)}$ the finite binary string  transmitted from machine $i$ to the central machine, 
which is a measurable function of the local data $X^{(i)}$.
 For
a class of potential signals {$\FF \subset L_2[0,1]$}, we restrict the communication between the 
machines by assuming that for numbers $B^{(1)}, \ldots, B^{(m)}$, it holds that 
 $\mathbb{E}_{f_0,T}[l({Y^{(i)}} )]\leq B^{(i)}$ for every $f_0\in\mathcal{F}$ and 
 $i=1,\ldots,m$, where $l(Y)$ denotes the length of the string $Y$.  We denote the resulting class of communication restricted distributed estimators $\hat f$ by $\mathcal{F}_{dist}(B^{(1)},\ldots,B^{(m)};\mathcal{F})$.
The number of machines $m$ and the communication constraints $B^{(i)}$ 
are allowed to depend on the overall sample size $n$, in fact that is 
the interesting situation. To alleviate the notational burden somewhat we do not make 
this explicit in the notation however.

\subsection{Distributed minimax lower bounds for the $L_2$-risk} 
The first theorem we present gives a minimax lower bound for 
distributed procedures for the $L_2$-risk, uniformly over Sobolev-type 
Besov balls, see Section \ref{sec: wavelets} in the supplement for rigorous definitions.

\begin{theorem}\label{theorem: minimaxL2LB}
Consider   $s,L> 0$, $\log_2 n\leq m=O(n^{\frac{2s}{1+2s}}/\log^2 n)$ and communication constraints $B^{(1)}, \ldots, B^{(m)} > 0$.
Let the sequence $\delta_n=o(1)$ be defined as the solution to the equation
\begin{align}
\delta_n=\min\Big\{\frac{m}{n \log_2 n }, \frac{m}{n \sum_{i=1}^m[ (\log_2(n)\delta_n^{\frac{1}{1+2s}} B^{(i)}) \wedge 1]} \Big\}.\label{def: delta_n}
\end{align}
Then in distributed random design nonparametric regression model \eqref{model: regression} we have that
\begin{align*}
\inf_{\hat{f}\in\mathcal{F}_{dist}(B^{(1)},\ldots,B^{(m)};B_{2,\infty}^s(L))}\, \sup_{f_0\in B_{2,\infty}^s(L)} \mathbb{E}_{f_0,T}\|\hat{f}-f_0\|_2^2 \gtrsim L^{\frac{2}{1+2s}} \delta_n^{\frac{2s}{1+2s}}.
\end{align*}
\end{theorem}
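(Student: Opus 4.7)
I would prove this lower bound by a distributed version of Assouad's lemma, combined with an information-theoretic bound on the messages that reflects the communication budgets. The approach extends the argument of \cite{zhang:2013} from the parametric to the nonparametric setting by using a wavelet-indexed hypercube of hypotheses, which turns the infinite-dimensional problem into a simultaneous testing problem for $2^J$ independent signs.

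The first step is the standard wavelet Assouad construction. Fix a resolution level $J$ with $M:=2^J\asymp L^{2/(1+2s)}\delta_n^{-1/(1+2s)}$ and a compactly supported wavelet basis $\{\psi_{J,k}\}_{k=0}^{M-1}$ of sufficient regularity. For every $\varepsilon\in\{-1,+1\}^M$ set
$$f_\varepsilon=\rho\sum_{k=0}^{M-1}\varepsilon_k\psi_{J,k},\qquad \rho=cL\cdot 2^{-J(s+1/2)},$$
with $c>0$ sufficiently small that $f_\varepsilon\in B_{2,\infty}^s(L)$. By orthonormality, $\|f_\varepsilon-f_{\varepsilon'}\|_2^2\asymp\rho^2\#\{k:\varepsilon_k\ne\varepsilon'_k\}$, and the scaling is calibrated so that $M\rho^2\asymp L^{2/(1+2s)}\delta_n^{2s/(1+2s)}$ and $\rho^2\asymp\delta_n$. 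Placing the uniform prior on $\{-1,+1\}^M$ and applying Assouad's lemma coordinate-wise lower-bounds the minimax $L_2$-risk by a constant multiple of $\rho^2\sum_k(1-\|\overline{\mathbb P}_{+k}-\overline{\mathbb P}_{-k}\|_{TV})$, where $\overline{\mathbb P}_{\pm k}$ is the marginal law of $(Y^{(1)},\ldots,Y^{(m)})$ under the uniform mixture over sign vectors with $\varepsilon_k$ fixed. It is enough to show that a positive fraction of these total variations are bounded away from $1$.

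The main technical step --- and the principal obstacle --- is to control these total variations under the expected-length constraints $\mathbb E\,l(Y^{(i)})\le B^{(i)}$. Two ingredients enter. First, a truncation step replaces the expected-length constraint by the deterministic one $l(Y^{(i)})\le B^{(i)}\log_2 n$: on the Markov event $\{l(Y^{(i)})>B^{(i)}\log_2 n\}$, of probability $\le 1/\log_2 n$, the message is replaced by a null symbol, perturbing total variations by at most $1/\log_2 n$ and bounding the entropy of the truncated message by $B^{(i)}\log_2 n$. Second, I would extend the tensorised information inequality of \cite{zhang:2013} to the random-design wavelet setting: using the conditional independence of the local messages given $\varepsilon$ and a strong data-processing inequality relating message information to the per-coordinate Gaussian Kullback-Leibler divergence $KL(\mathbb P_{f_{+k}}^{(i)}\|\mathbb P_{f_{-k}}^{(i)})\lesssim (n/m)\rho^2/\sigma^2$ (computed via orthonormality of the wavelet basis and the uniform design), one obtains the averaged bound
$$\frac{1}{M}\sum_{k=0}^{M-1} I\bigl(\varepsilon_k;Y^{(1)},\ldots,Y^{(m)}\bigr)\;\lesssim\;\frac{n\delta_n}{m}\sum_{i=1}^m\Bigl[\log_2(n)\,\delta_n^{1/(1+2s)}\,B^{(i)}\wedge 1\Bigr].$$
Here the $(\cdot)\wedge 1$ clamp arises from the trivial entropy bound $I(\varepsilon_k;Y^{(i)})\le H(\varepsilon_k)=1$, while the other branch comes from the per-channel KL-based bound; the adaptation of the covering/tensorisation argument to the infinite-dimensional wavelet setting with a random design is the technically delicate point.

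Finally, the Assouad obstruction requires the above average mutual information to be bounded by a constant, which, after using $\rho^2\asymp\delta_n$, translates directly into the self-consistent inequality $\delta_n\lesssim m\big/\bigl(n\sum_i[\log_2(n)\delta_n^{1/(1+2s)}B^{(i)}\wedge 1]\bigr)$, i.e.\ the second branch of the minimum in \eqref{def: delta_n}. The first branch $m/(n\log_2 n)$ is the absolute floor in the regime where the $\log_2 n$ loss from the truncation step dominates, so that further communication cannot sharpen the bound. Substituting the resulting $\delta_n$ back into $M\rho^2\asymp L^{2/(1+2s)}\delta_n^{2s/(1+2s)}$ and into the Assouad lower bound produces the displayed rate.
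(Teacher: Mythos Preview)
Your overall architecture matches the paper's: a wavelet hypercube at resolution $2^J\asymp \delta_n^{-1/(1+2s)}$, an information bound on the messages in the spirit of \cite{zhang:2013}, and then the self-consistent equation for $\delta_n$. The paper uses a generalized Fano inequality (with ball counts $N_t^{\max}$) rather than Assouad, but for a hypercube prior the two are interchangeable, so this is not a substantive difference.

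Two points, however, are not right as stated.

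\textbf{(a) Handling the expected-length constraint.} The paper does \emph{not} truncate. It proves a small source-coding lemma (Lemma~\ref{lem: Shannon}): for any random finite binary string $Y$, $H(Y)\le 2\,\mathbb{E}\,l(Y)+1$, which feeds directly into the bound $I(X^{(i)};Y^{(i)})\le H(Y^{(i)})\lesssim B^{(i)}$. Your truncation at length $B^{(i)}\log_2 n$ would (i) cost an extra $\log_2 n$ in the entropy bound, and (ii) since the theorem assumes $m\ge \log_2 n$, the union probability that \emph{some} machine truncates is $\Omega(1)$, so ``perturbing total variations by at most $1/\log_2 n$'' does not hold for the joint law of $(Y^{(1)},\ldots,Y^{(m)})$. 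The entropy lemma is the clean replacement.

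\textbf{(b) Where the branch $m/(n\log_2 n)$ comes from.} It is not a truncation artefact. In the paper it is the constraint under which the strong data-processing step is usable: the likelihood ratio between $\pm$ hypotheses is localized on a set $B_k(t)$ so that $C=e^{2a}$ with $a^2\asymp n\delta^2\log(dm)/(dm)$; one needs $a$ small for $(C-1)^2$ to act as a contraction on $I(X^{(i)};Y^{(i)})$. With $\delta^2\asymp\delta_n^{2s/(1+2s)}$ and $d\asymp\delta_n^{-1/(1+2s)}$, the requirement $a\lesssim 1$ is exactly $\delta_n\lesssim m/(n\log n)$. The $\log n$ that multiplies $B^{(i)}$ in \eqref{def: delta_n} has the same origin (the $\log(dm)$ in $a^2$), not the message-length bound.

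On the random-design issue you flagged: the paper conditions on $T^{(i)}$, chooses \emph{disjointly supported} Daubechies wavelets so that the local sample splits into blocks $X^{(i)}_{[(\ell_{k-1}+1):\ell_k]}$ with $X^{(i)}_{[(\ell_{k-1}+1):\ell_k]}\mid(T^{(i)},F_k)$ independent of $F_{-k}$, and uses a binomial concentration bound (Lemma~\ref{lem: design}) to guarantee each block has size $\asymp n/(md)$ with high probability. That is indeed the technical core, and your sketch correctly identifies it as such.
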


\begin{proof}
See Section \ref{sec: minimaxL2LB}
\end{proof}


We briefly comment on the derived result. First of all note that the quantity $\delta_n$ in \eqref{def: delta_n} is well defined, since the left-hand side of the equation is increasing, while the right-hand side is decreasing in $\delta_n$. The proof of the theorem is based on an application of a version of Fano's inequality, frequently used to derive minimax lower bounds. 
More specifically, as a first step we find as usual  a large enough finite subset of the functional space $B_{2,\infty}^s(L)$ over which the minimax rate is the same as over the whole space. This is  done by finding the `effective resolution level' $j_n$ in the wavelet representation of the 
function of interest and perturbing the corresponding wavelet coefficients, while setting the rest of the coefficients to zero. This effective resolution level for $s$-smooth functions is usually $({1+2s})^{-1}\log_2 n$ in case of the $L_2$-norm for non-distributed models
(e.g. \cite{gine:nickl:2016}). However, in our distributed setting the effective {resolution level} changes to 
$({1+2s})^{-1}\log \delta_n^{-1}$, which can be substantially different from the non-distributed case, as it strongly depends on the number of transmitted bits. 
The dependence on the expected number of transmitted bits enters the formula by using a variation 
of  Shannon's source coding theorem. Many of the information theoretic manipulations in the proof are an extended and adapted version of the approach introduced in \cite{zhang:2013}, where similar results were derived in context of distributed methods with communication constraints over parametric models.

To understand the result it is illustrative to consider the special 
case that the communication constraints are the same for all machines, i.e. $B^{(1)}
=\cdots=B^{(m)} = B$ for some $B>0$. We can then  distinguish three regimes: \ref{(i)} the case 
$B\geq n^{1/(1+2s)}/\log_2 n$; \ref{(ii)} the case  $(n\log_2 (n)/m^{2+2s})^{{1}/({1+2s})}\leq B <  n^{1/(1+2s)}/\log_2 n$; and \ref{(iii)} the case $B < (n\log_2 (n)/m^{2+2s})^{{1}/({1+2s})}$. 

In regime \ref{(i)}  we have a large  communication budget and by elementary computations we get that the minimum in \eqref{def: delta_n}  is taken in the second fraction 
and hence that $\delta_n=1/n$. This means that in this case the derived lower bound corresponds to the usual non-distributed minimax rate $n^{-2s/(1+2s)}$. 
In the other extreme case, regime \ref{(iii)}, the minimum is taken at the first term in  \eqref{def: delta_n} and $\delta_n=m/(n\log_2 n)$, so the lower bound is of the order $(n\log_2(n)/m)^{-2s/(1+2s)}$. This rate is, up to the $\log_2 n$ factor, equal to the minimax rate corresponding to the sample size $n/m$. Consequently, in this case it does not make sense to consider distributed methods, since by just using a single machine the best rate
can already be obtained (up to a logarithmic factor).
In  the intermediate case \ref{(ii)} it is straightforward to see that 
$\delta_n= (nB\log_2 n)^{({1+2s})/({2+2s})}$. It follows that if $B=o(n^{1/(1+2s)}/\log_2 n)$, 
i.e.\ if we are only allowed to communicate  `strictly' less than in case \ref{(i)}, 
then  the  lower bound is strictly worse  than the minimax rate corresponding to the non-distributed setting.

The findings above are summarized in the following corollary.

\begin{corollary}\label{cor: minimaxLB}
Consider   $s, L> 0$, a communication constraints $B^{(1)}=....=B^{(m)}=B> 0$ and assume that $\log_2 n \le m=O(n^{\frac{2s}{1+2s}}/\log^2 n)$. Then
\begin{enumerate}[label=(\roman*)]
\item if $B\geq n^{1/(1+2s)}/\log_2 n$, \label{(i)}
\begin{align*}
\inf_{\hat{f}\in\mathcal{F}_{dist}(B,\ldots,B; B_{2,\infty}^{s}(L))}\,
\sup_{f_0\in B_{2,\infty}^s(L)} \mathbb{E}_{f_0,T}\|\hat{f}-f_0\|_2^2 \gtrsim L^{\frac{2}{1+2s}}n^{-\frac{2s}{1+2s}};
\end{align*}
\item
if  $(n\log_2 (n)/m^{2+2s})^{{1}/({1+2s})}\leq B <  n^{1/(1+2s)}/\log_2 n$, \label{(ii)}
\begin{align*}
\inf_{\hat{f}\in\mathcal{F}_{dist}(B,\ldots,B; B_{2,\infty}^{s}(L))}\,
\sup_{f_0\in B_{2,\infty}^s(L)} \mathbb{E}_{f_0,T}\|\hat{f}-f_0\|_2^2 \gtrsim   L^{\frac{2}{1+2s}}\Big(\frac{n^{1/(1+2s)}}{B\log_2 n}\Big)^{\frac{2s}{2+2s}} n^{-\frac{2s}{1+2s}};
\end{align*}
\item
if $(n\log_2 (n)/m^{2+2s})^{{1}/({1+2s})}> B$, \label{(iii)}
\begin{align*}
\inf_{\hat{f}\in\mathcal{F}_{dist}(B,\ldots,B; B_{2,\infty}^{s}(L))}\, \sup_{f_0\in B_{2,\infty}^s(L)} 
\mathbb{E}_{f_0,T}\|\hat{f}-f_0\|_2^2 \gtrsim  L^{\frac{2}{1+2s}} \Big(\frac{n\log_2 n}{m}\Big)^{-\frac{2s}{1+2s}}.
\end{align*}
\end{enumerate}
\end{corollary}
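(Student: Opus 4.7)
The plan is to specialize Theorem \ref{theorem: minimaxL2LB} to the case $B^{(1)}=\cdots=B^{(m)}=B$ and solve the fixed-point equation \eqref{def: delta_n} separately in each of the three regimes. With equal budgets, equation \eqref{def: delta_n} reduces to
\begin{align*}
\delta_n = \min\Big\{\frac{m}{n\log_2 n},\,\frac{1}{n\bigl[(\log_2(n)\,\delta_n^{1/(1+2s)}B)\wedge 1\bigr]}\Big\},
\end{align*}
and I would first note (as in the remark following Theorem \ref{theorem: minimaxL2LB}) that a solution exists and is unique because the LHS is strictly increasing in $\delta_n$ while the RHS is non-increasing in $\delta_n$. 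The strategy in each regime is then to guess the correct expression for $\delta_n$ and verify that it is a fixed point; the corresponding rate $L^{2/(1+2s)}\delta_n^{2s/(1+2s)}$ then follows directly from Theorem \ref{theorem: minimaxL2LB}.

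For regime \ref{(i)} I would try the candidate $\delta_n = 1/n$. Under the hypothesis $B \geq n^{1/(1+2s)}/\log_2 n$, the quantity $\log_2(n)\,\delta_n^{1/(1+2s)} B$ is at least $1$, so the inner bracket is $1$ and the second term in the min equals $1/n$; and since $m\geq \log_2 n$, the first term $m/(n\log_2 n)$ is at least $1/n$. Thus $\delta_n=1/n$ is indeed the fixed point, producing the claimed rate $n^{-2s/(1+2s)}$. For regime \ref{(iii)} I would try $\delta_n = m/(n\log_2 n)$. I would check that the assumption $B<(n\log_2 n/m^{2+2s})^{1/(1+2s)}$ is equivalent (after raising to the power $1+2s$ and rearranging) to $\log_2(n)\,\delta_n^{1/(1+2s)} B < 1/m \leq 1$, so the bracket equals $\log_2(n)\delta_n^{1/(1+2s)} B$ and the second term in the min is at least $m/(n\log_2 n)$, confirming the first term is attained.

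The intermediate regime \ref{(ii)} is the only one requiring a small computation. Here I would set $\delta_n$ equal to the second term in the min while keeping the bracket equal to $\log_2(n)\delta_n^{1/(1+2s)} B$ (not yet capped at $1$), giving
\begin{align*}
\delta_n \cdot n\log_2(n)\,\delta_n^{1/(1+2s)} B = 1,
\end{align*}
hence $\delta_n = (nB\log_2 n)^{-(1+2s)/(2+2s)}$. I would then verify that under the assumed bounds on $B$ the two side conditions $\log_2(n)\delta_n^{1/(1+2s)} B \leq 1$ (so the min-with-one is indeed active as the non-unit branch) and $\delta_n \leq m/(n\log_2 n)$ (so the minimum in \eqref{def: delta_n} is attained by the second term) both hold. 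Substituting this $\delta_n$ into $\delta_n^{2s/(1+2s)} = (nB\log_2 n)^{-2s/(2+2s)}$ and reorganizing as $(n^{1/(1+2s)}/(B\log_2 n))^{2s/(2+2s)} n^{-2s/(1+2s)}$ gives the stated rate.

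No genuine obstacle is anticipated beyond careful bookkeeping of exponents; the only place where one has to be slightly attentive is in regime \ref{(ii)}, to confirm that the guessed $\delta_n$ is consistent with both boundary inequalities, thereby gluing continuously onto regimes \ref{(i)} and \ref{(iii)} at the endpoints $B \asymp n^{1/(1+2s)}/\log_2 n$ and $B \asymp (n\log_2 n/m^{2+2s})^{1/(1+2s)}$.
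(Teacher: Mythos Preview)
Your proposal is correct and follows essentially the same approach as the paper, which derives the corollary directly from Theorem \ref{theorem: minimaxL2LB} by solving the fixed-point equation \eqref{def: delta_n} in each of the three regimes (the paper's ``proof'' is the discussion immediately preceding the corollary). One small algebraic slip: in regime \ref{(iii)} the assumption $B<(n\log_2 n/m^{2+2s})^{1/(1+2s)}$ is equivalent to $\delta_n^{1/(1+2s)} B < 1/m$, not to $\log_2(n)\,\delta_n^{1/(1+2s)} B < 1/m$; but since $m\geq \log_2 n$ this still gives $\log_2(n)\,\delta_n^{1/(1+2s)} B \leq \log_2 n/m \leq 1$, which is exactly what you need for both the ``no capping'' and the min comparison, so the argument goes through unchanged.
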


\subsection{Non-adaptive rate-optimal distributed procedures for $L_2$-risk}

Next we show that the derived lower bounds are sharp
by exhibiting distributed procedures that attain the bounds (up to logarithmic factors).
We note that it is sufficient to consider only the case 
$B \geq (n\log_2 (n)/m^{2+2s})^{{1}/({1+2s})}$, since otherwise distributed techniques do not perform better than standard techniques carried out on one of the local servers. In case \ref{(iii)} therefore one would probably prefer to use a single local machine instead of a complicated distributed method with (possibly) worse performance.

As a first step let us consider Daubechies wavelets $\psi_{jk}$, $j=0,...$,  $k=0,1,...,2^j-1$ with at least $s$ vanishing moments (for details, see Section \ref{sec: wavelets} in the supplement). Then let us estimate the wavelet coefficients of the underlying function $f_0$ in each local problems, i.e. for every $j=0,...,$ and $k=0,1,...,2^j-1$ let us construct
\begin{align*}
\hat{f}_{jk}^{(i)}=\frac{m}{n}\sum_{\ell=1}^{n/m}X_\ell^{(i)}\psi_{jk}(T^{(i)}_\ell) 
\end{align*}
and note that
\begin{align*}
\mathbb{E}_{f_0,T}\hat{f}_{jk}^{(i)}=\int_{0}^1 f_0(t)\psi_{jk}(t)dt=f_{0,jk}.
\end{align*}
Since one can only transmit finite amount of bits we have to approximate the estimators of the wavelet coefficients. Let us take an arbitrary $x\in\mathbb{R}$ and write it in a scientific binary representation, i.e. $|x|=\sum_{k=-\infty}^{\log_2|x|} b_k2^k$, with $b_k\in\{0,1\}$, $k\in \mathbb{Z}$. Then let us take $y$ consisting the same digits as $x$ up to the $(D\log_2 n)th$ digits. for some $D>0$, after the binary dot (and truncated there), i.e. $|y|=\sum_{k=-D\log_2 n}^{\log_2|x|} b_k2^k$, see also Algorithm \ref{alg: transmit:number}.

\begin{algorithm}
\caption{Transmitting a finite-bit approximation of a number}\label{alg: transmit:number}
\begin{algorithmic}[1]
\Procedure{TransApprox($x$)}{}
\State \textit{Transmit:} $\sign(x)$, $b_{-\lfloor D\log_2 n\rfloor},...,b_{\lfloor\log_2|x|\rfloor} $.
\State \textit{Construct:} $y=(2\sign(x)-1)\sum_{k= -D\log_2 n}^{\log_2|x|}b_k2^{k}$.
\EndProcedure
\end{algorithmic}
\end{algorithm}

Observe that the length of $y$ (viewed
as a binary string) is bounded from above by $1+(1\vee \log_2 |x|)+D\log_2 n$ bits.
The following lemma asserts that if $\mathbb{E}(1\vee \log_2|X|)=o(\log_2 n)$, then the expected length  $\EE [l(Y)]$ of the  constructed binary string approximating $X$ is less than $\log_2 n$ (for sufficiently large $n$ and by choosing $D=1/2$)  and the approximation is sufficiently close to $X$.
\begin{lemma}\label{lem: approx2}
Assume that $\EE(1\vee \log_2  |X|)=o(\log_2  n)$. Then the approximation $Y$ of $X$ given in Algorithm \ref{alg: transmit:number} satisfies that
\begin{align*}
0\leq |X-Y|\leq n^{-D}\quad \text{and}\quad
\EE[ l(Y)]\leq (D+o(1))\log_2(n).
\end{align*}
\end{lemma}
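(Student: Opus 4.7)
The plan is to verify the two assertions separately from the explicit binary representation used in Algorithm~\ref{alg: transmit:number}, with one step of bookkeeping on the sign and integer part and one geometric-series estimate on the tail.

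First, for the approximation error, I would expand $|X|$ in its binary representation $|X|=\sum_{k=-\infty}^{\lfloor\log_2|X|\rfloor} b_k 2^k$ with $b_k\in\{0,1\}$. By construction $Y$ has the same sign as $X$ and $|Y|$ is obtained from $|X|$ by discarding the bits with index $k<-\lfloor D\log_2 n\rfloor$. Hence
\begin{equation*}
|X-Y|\;=\;|X|-|Y|\;=\sum_{k<-\lfloor D\log_2 n\rfloor} b_k 2^k\;\le\;\sum_{k<-\lfloor D\log_2 n\rfloor} 2^k\;=\;2^{-\lfloor D\log_2 n\rfloor}\;\le\;2\cdot n^{-D},
\end{equation*}
which gives the claimed bound up to a constant absorbable into $D$.

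Second, for the expected length, I would read off from the algorithm that $Y$ consists of one sign bit together with the bits $b_k$ for $-\lfloor D\log_2 n\rfloor\le k\le \lfloor\log_2|X|\rfloor$, so that
\begin{equation*}
l(Y)\;\le\;1+\lfloor D\log_2 n\rfloor+\bigl(1\vee(\lfloor\log_2|X|\rfloor+1)\bigr)\;\le\;2+D\log_2 n+(1\vee\log_2|X|).
\end{equation*}
Taking expectations and applying the hypothesis $\mathbb{E}(1\vee\log_2|X|)=o(\log_2 n)$ yields
\begin{equation*}
\mathbb{E}[l(Y)]\;\le\;D\log_2 n+o(\log_2 n)\;=\;(D+o(1))\log_2 n.
\end{equation*}

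There is no real conceptual obstacle; the only subtleties are cosmetic. The distinction between $|X|\ge1$ and $|X|<1$ is exactly what forces the $(1\vee\log_2|X|)$ correction (if $|X|<1$ the ``integer part'' contributes no bits but one still needs a constant indicator, and $\log_2|X|$ can be very negative without increasing $l(Y)$). One also has to handle the non-integrality of $D\log_2 n$ via a floor, which is why the error bound picks up a harmless factor of $2$. Finally, to make $Y$ genuinely decodable in the central machine one should use a prefix-free encoding (adding an $O(\log l(Y))=O(\log\log n)$ overhead), but this is absorbed into the $o(\log_2 n)$ term and does not affect either conclusion.
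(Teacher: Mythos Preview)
Your proof is correct and follows essentially the same approach as the paper's: both arguments verify the two claims directly from the binary construction, bounding the truncation error by the geometric tail and the length by $1+(1\vee\log_2|X|)+D\log_2 n$ before taking expectations. You are in fact slightly more careful than the paper in flagging the harmless factor of $2$ coming from the floor $\lfloor D\log_2 n\rfloor$, which the paper simply absorbs without comment.
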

\begin{proof}
See Section \ref{sec: approx}.
\end{proof}

After these preparations we can exhibit  procedures  attaining (nearly) the 
theoretical limits obtained in Corollary \ref{cor: minimaxLB}.

We first consider the case  \ref{(i)} that $B\geq n^{1/(1+2s)}/\log_2 n$. 
In this case each local machine $i=1,\ldots,m$ transmits the approximations $Y^{(i)}_{jk}$ (given in Algorithm \ref{alg: transmit:number}) of the first $n^{1/(1+2s)}\wedge (B/\log_2 n)$  wavelet coefficients $\hat{f}_{jk}^{(i)}$, i.e. for $2^j+k\leq n^{1/(1+2s)}\wedge (B/\log_2 n)$. Then in the central machine we simply average  the  transmitted approximations to obtain the estimated wavelet coefficients 
\begin{equation*}
\hat{f}_{jk}=
\begin{cases}
\frac{1}{m}\sum_{i=1}^m Y^{(i)}_{jk}, &\text{if $2^j+k\leq n^{1/(1+2s)}\wedge (B/\log_2 n)$},\\
0,& \text{else.}
\end{cases}
\end{equation*}
The final estimator $\hat f$ for $f_0$ is the function in $L_2[0,1]$ 
with these wavelet coefficients, i.e.\
$\hat f = \sum \hat f_{jk}\psi_{jk}$.
The  method is summarized as Algorithm \ref{alg: nonadapt:L2:case1} below.

\begin{algorithm}

\caption{Nonadaptive $L_2$-method, case \ref{(i)}}\label{alg: nonadapt:L2:case1}
\begin{algorithmic}[1]
\BState  \textbf{In the local machines:}
\For {$i=1$ to $m$}:
\For {$ 2^j+k=$  1 to $n^{1/(1+2s)}\wedge (B/\log_2 n)$}
\State \text{$Y_{jk}^{(i)}$ :=TransApprox($\hat{f}_{jk}^{(i)}$)}
\EndFor
\EndFor
\BState \textbf{In the central machine}:
\For {$2^j+k=1$ to $n^{1/(1+2s)}\wedge (B/\log_2 n)$}
\State \text{$\hat{f}_{jk}:= mean\{ Y_{jk}^{(i)}:\, 1\leq i\leq m\}$.}
\EndFor
\State Construct: $\hat f = \sum \hat f_{jk}\psi_{jk}$.
\end{algorithmic}
\end{algorithm}

We note again that the procedure outlined in Algorithm \ref{alg: nonadapt:L2:case1}
is just a simple averaging, sometimes called ``divide and conquer'' or ``embarrassingly
parallel'' in the learning literature (e.g.\ \cite{zhang2013}, \cite{rosen2016}). The following theorem asserts that the constructed estimator indeed attains 
the lower bound in case \ref{(i)} (up to a logarithmic factor for $B$ close to the threshold).

\begin{theorem}\label{theorem: minimaxL2UB1}
Let $s, L > 0$, $m\leq n$, and suppose that  $B\geq n^{1/(1+2s)}/\log_2 n$. 
Then  the distributed estimator $\hat f$ described in Algorithm 
\ref{alg: nonadapt:L2:case1} belongs to $\mathcal{F}_{dist}(B,\ldots,B; B_{2,\infty}^{s}(L))$ 
and satisfies
\begin{align*}
\sup_{f_0\in B_{2,\infty}^{s}(L),\,\|f_0\|_{\infty}\leq M}\mathbb{E}_{f_0,T}\|\hat{f}-f_0\|_2^2\lesssim L^{\frac{2}{1+2s}} \big(n^{-\frac{2s}{1+2s}})\vee (B/\log_2 n)^{-2s}\big).
\end{align*}
\end{theorem}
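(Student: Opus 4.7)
The plan is to verify that the estimator $\hat f$ respects the communication budget $B$ per machine, and then to bound $\|\hat f - f_0\|_2^2$ via the wavelet Parseval identity, accounting for three sources of error: (i) the tail bias coming from truncating the wavelet expansion at level $J_n := n^{1/(1+2s)}\wedge (B/\log_2 n)$, (ii) the sampling variance in the unbiased local estimators $\hat f_{jk}^{(i)}$ after averaging over the $m$ machines, and (iii) the finite-bit quantization error introduced by \textsc{TransApprox} in Algorithm \ref{alg: transmit:number}. Throughout I fix $D=1/2$ in the call to \textsc{TransApprox}.

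For the budget, $\hat f_{jk}^{(i)} = (m/n)\sum_{\ell}X^{(i)}_{\ell}\psi_{jk}(T^{(i)}_{\ell})$ is an unbiased estimator of $f_{0,jk}$, and the hypothesis $\|f_0\|_\infty\leq M$ together with the orthonormality of the wavelet basis gives $\mathbb{E}_{f_0,T}(\hat f_{jk}^{(i)})^2\leq f_{0,jk}^2 + (M^2+\sigma^2)m/n = O(1)$, uniformly in $f_0\in B_{2,\infty}^s(L)\cap\{\|\cdot\|_\infty\leq M\}$ and in $(j,k)$. By Jensen's inequality this forces $\mathbb{E}(1\vee\log_2|\hat f_{jk}^{(i)}|)=O(1)=o(\log_2 n)$, so Lemma \ref{lem: approx2} applies and yields $|Y_{jk}^{(i)}-\hat f_{jk}^{(i)}|\leq n^{-1/2}$ together with $\mathbb{E}\,l(Y_{jk}^{(i)})\leq (1/2+o(1))\log_2 n$. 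Summing over the at most $J_n$ coefficients transmitted by each machine, the total expected communication per machine is at most $J_n\cdot (1/2+o(1))\log_2 n$, which by the definition of $J_n$ and the regime $B\geq n^{1/(1+2s)}/\log_2 n$ is bounded by $B$ for $n$ sufficiently large.

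For the risk, Parseval yields $\|\hat f - f_0\|_2^2 = \sum_{2^j+k>J_n} f_{0,jk}^2 + \sum_{2^j+k\leq J_n}(\hat f_{jk} - f_{0,jk})^2$. The Besov characterization $\sum_{k}f_{0,jk}^2\lesssim L^2 2^{-2js}$ bounds the tail bias by $L^2 J_n^{-2s}$. In the second sum, writing $\hat f_{jk}-f_{0,jk}=(1/m)\sum_i(Y_{jk}^{(i)}-\hat f_{jk}^{(i)})+(1/m)\sum_i(\hat f_{jk}^{(i)}-f_{0,jk})$ separates a deterministic quantization error of size at most $n^{-1/2}$ from a zero-mean average of $m$ independent local estimators of variance $O(m/n)$ each; the latter average has variance $O(1/n)$, so summing over $J_n$ coefficients gives a sampling contribution $\lesssim J_n/n$ and a quantization contribution $\lesssim J_n/n$. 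Plugging in $J_n = n^{1/(1+2s)}\wedge (B/\log_2 n)$: when $J_n = n^{1/(1+2s)}$ all three terms are of order $n^{-2s/(1+2s)}$, whereas when $J_n = B/\log_2 n\leq n^{1/(1+2s)}$ the variance and quantization terms are still $\leq n^{-2s/(1+2s)}$ and are dominated by the tail bias $L^2(B/\log_2 n)^{-2s}$, giving the stated rate after matching powers of $L$. The only real technical point is the uniform control of $\mathbb{E}(1\vee\log_2|\hat f_{jk}^{(i)}|)$ needed to invoke Lemma \ref{lem: approx2}, which is precisely the role of the $\|f_0\|_\infty\leq M$ hypothesis; once it is in place the remaining calculations are routine.
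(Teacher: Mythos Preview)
Your proposal is correct and follows essentially the same approach as the paper: verify the budget via Lemma~\ref{lem: approx2} (you bound the second moment and appeal to concavity of the logarithm; the paper bounds the first moment directly via Cauchy--Schwarz, which is marginally simpler), then decompose the $L_2$-risk via Parseval into tail bias $\lesssim L^2 J_n^{-2s}$, averaged sampling variance $\lesssim J_n/n$, and quantization error $\lesssim J_n/n$, and balance. The only loose end---shared with the paper---is the precise power of $L$: the algorithm's truncation level $J_n$ does not depend on $L$, so the honest bound is $L^2 J_n^{-2s}+C_M J_n/n$ rather than the $L^{2/(1+2s)}$ claimed in the theorem; this is harmless if $L$ is treated as a fixed constant absorbed into $\lesssim$, but your phrase ``after matching powers of $L$'' papers over the same gap.
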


\begin{proof}
See Section \ref{sec: minimaxL2UB}
\end{proof}

Next we consider the case \ref{(ii)} of Corollary \ref{cor: minimaxLB}, i.e.\ 
the case that the communication restriction satisfies  $(n\log_2 (n)/m^{2+2s})^{{1}/({1+2s})}\leq   B<
n^{{1}/({1+2s})}/\log_2 n$. For technical reasons we also assume that $B\geq \log_2 n$. Using Algorithm \ref{alg: nonadapt:L2:case1} in this case 
would result in a highly sub-optimal procedure, as we prove at the end of Section \ref{sec: minimaxL2UB2}. It turns out that under this more severe communication 
restriction we can do much better if we form different groups of machines 
that work on different parts of the signal.

We introduce the notation $\eta = \lfloor (L^2n)^{\frac{1}{2+2s}}\big((\log_2 n)/B\big)^{\frac{1+2s}{2+2s}}\rfloor\wedge m$. Then  we group the local machines into $\eta$ groups 
and let the different groups work on different parts of wavelet domain as follows:
the  machines with numbers $1\leq i\leq  m/\eta$ each transmit the approximations $Y_{jk}^{(i)}$ of the estimated wavelet coefficients $\hat{f}_{jk}^{(i)}$ for $1\leq 2^{j}+k\leq \lfloor B/\log_2 n\rfloor$; the next machines, 
with numbers $m/\eta< i\leq 2 m/\eta$, each transmit the approximations $Y_{jk}^{(i)}$
for $\lfloor B/\log_2 n\rfloor< 2^{j}+k\leq 2 \lfloor B/\log_2 n\rfloor$,  and so on.  The last machines with numbers 
$(\eta-1) m/\eta< i\leq  m $  transmit the $Y_{jk}^{(i)}$ for $(\eta-1) \lfloor B/\log_2 n\rfloor< 2^{j}+k\leq \eta \lfloor B/\log_2 n\rfloor$. Then in the central machine we average the corresponding transmitted noisy coefficients in the obvious way. Formally, using the notation $\mu_{jk}= \big\lceil(2^{j}+k)\lfloor B/\log_2 n\rfloor^{-1}\big\rceil-1$,  the aggregated estimator $\hat f$ is the function with wavelet 
coefficients given by 
\begin{align*}
\hat{f}_{jk}=
\begin{cases}
mean\{Y_{jk}^{(i)}:\, \frac{\mu_{jk} m}{\eta} < i \leq \frac{(\mu_{jk}+1) m}{\eta}\},& \text{if $2^{j}+k\leq \eta\lfloor B/\log_2 n\rfloor$},\\
0, & \text{else}.
\end{cases}
\end{align*}
The procedure is summarized as Algorithm \ref{alg: nonadapt:L2:case2}.

\begin{algorithm}
\caption{Nonadaptive $L_2$-method, case \ref{(ii)}}\label{alg: nonadapt:L2:case2}
\begin{algorithmic}[1]
\BState \textbf{In the local machines}:
\For{$\ell= 1$ to $\eta$ }
\For {$i =\lfloor (\ell-1) m/\eta\rfloor +1$ to $\lfloor\ell m/\eta\rfloor$}
\For {$ 2^{j}+k=  (\ell-1) \lfloor B/\log_2 n\rfloor+1$ to $\ell\lfloor B/\log_2 n\rfloor$}
\State \text{$Y_{jk}^{(i)}$ :=TransApprox($\hat{f}_{jk}^{(i)}$)}
\EndFor
\EndFor
\EndFor
\BState \textbf{In the central machine}:
\For {$ 2^j+k=1$ to $\eta\lfloor B/\log_2 n\rfloor$}
\State \text{$\hat{f}_{jk}:=mean\{Y_{jk}^{(i)}:\, \mu_{jk}  m/\eta<  i \leq (\mu_{jk}+1)  m/\eta \}$}
\EndFor
\State Construct: $\hat f = \sum \hat f_{jk}\psi_{jk}$.
\end{algorithmic}
\end{algorithm}

The following theorem asserts that this estimator  attains 
the lower bound in case \ref{(ii)} (up to a logarithmic factor).

\begin{theorem}\label{theorem: minimaxL2UB2}
Let $s, L > 0$, $m\leq n$ and suppose that  $(n\log_2 (n)/m^{2+2s})^{{1}/({1+2s})}\vee \log_2 n \leq B
< n^{1/(1+2s)}/\log_2 n$. Then the distributed estimator $\hat f$ described 
 in Algorithm \ref{alg: nonadapt:L2:case2} belongs to 
 $\mathcal{F}_{dist}(B,\ldots,B; B_{2,\infty}^{s}(L))$  and satisfies 
\begin{align*}
\sup_{f_0\in B_{2,\infty}^{s}(L),\,\|f_0\|_{\infty}\leq M}\mathbb{E}_{f_0,T}\| \hat{f}_n-f_0\|_2^2\lesssim  M_n \Big(\frac{n^{1/(1+2s)}}{B\log_2 n}\Big)^{\frac{2s}{2+2s}}n^{-\frac{2s}{1+2s}},
\end{align*}
with $M_n=L^{\frac{4}{2+2s}}(\log_2n)^{2s} $.
\end{theorem}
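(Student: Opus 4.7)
My plan is a bias-variance decomposition over the wavelet basis, combined with a careful accounting of quantization and communication costs. By Parseval's identity the risk splits as
\begin{align*}
\mathbb{E}_{f_0,T}\|\hat f - f_0\|_2^2 = \sum_{2^j+k > \eta\lfloor B/\log_2 n\rfloor} f_{0,jk}^2 \;+\; \sum_{2^j+k \leq \eta\lfloor B/\log_2 n\rfloor} \mathbb{E}_{f_0,T}(\hat f_{jk} - f_{0,jk})^2.
\end{align*}
The first (truncation) sum is controlled by the Besov embedding $f_0 \in B_{2,\infty}^s(L)$, which yields $\sum_k f_{0,jk}^2 \lesssim L^2 2^{-2sj}$, and hence a contribution of order $L^2(\eta B/\log_2 n)^{-2s}$.

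For each retained coefficient I would split, via the triangle inequality, into a statistical and a quantization component
\begin{align*}
\mathbb{E}_{f_0,T}(\hat f_{jk}-f_{0,jk})^2 \leq 2\,\mathbb{E}\Big(\tfrac{\eta}{m}\sum_{i \in G}(\hat f_{jk}^{(i)}-f_{0,jk})\Big)^2 + 2\,\mathbb{E}\Big(\tfrac{\eta}{m}\sum_{i \in G}(Y_{jk}^{(i)}-\hat f_{jk}^{(i)})\Big)^2,
\end{align*}
where $G$ is the group of size $m/\eta$ assigned to $(j,k)$. The local estimator $\hat f_{jk}^{(i)}$ is unbiased for $f_{0,jk}$ and, using $\|f_0\|_\infty \leq M$ and $\|\psi_{jk}\|_2=1$, has variance at most $(m/n)(M^2+\sigma^2)$; averaging over the $m/\eta$ independent machines in $G$ yields a statistical variance of order $\eta/n$. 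Lemma \ref{lem: approx2} controls the quantization component by $n^{-2D}$ per coefficient. Summing over the $\eta\lfloor B/\log_2 n\rfloor$ transmitted coefficients gives a total estimation error of order $\eta^2 B/(n\log_2 n)$ plus a negligible quantization contribution.

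Balancing the truncation and estimation pieces then explains the definition of $\eta$: setting $L^2(\eta B/\log_2 n)^{-2s}$ equal to $\eta^2 B/(n\log_2 n)$ gives exactly $\eta \asymp (L^2 n)^{1/(2+2s)}((\log_2 n)/B)^{(1+2s)/(2+2s)}$, and substituting back produces an upper bound of order $L^{4/(2+2s)}(\log_2 n)^{2s/(2+2s)}B^{-2s/(2+2s)}n^{-2s/(2+2s)}$, which sits inside the claimed $M_n$-rate after collecting powers of $n^{1/(1+2s)}$ and $(\log_2 n)^{2s}$. The clip $\eta \leq m$ is invoked only when the resulting group size $m/\eta$ would otherwise be smaller than one; the lower bound on $B$ in case \ref{(ii)} ensures this does not degrade the rate.

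The main obstacle I expect is verifying the communication constraint. Lemma \ref{lem: approx2} requires $\mathbb{E}(1\vee \log_2|\hat f_{jk}^{(i)}|)=o(\log_2 n)$, but since $\|\psi_{jk}\|_\infty \asymp 2^{j/2}$ one only has $|\hat f_{jk}^{(i)}|\lesssim 2^{j/2}(M+\sigma\max_\ell|\eps_\ell^{(i)}|)$, so the integer part of the transmitted number can occupy up to $j/2 + O(\log_2\log_2 n)$ bits. One exploits the hypothesis $B < n^{1/(1+2s)}/\log_2 n$, which (together with $\eta \leq m$) forces $2^j \leq \eta B/\log_2 n \lesssim n^{1/(1+2s)}$ and hence $j/2$ to be a strict fraction of $\log_2 n$, and then picks the precision parameter $D$ slightly smaller than $1$ so that each transmission still fits in at most $(1-\e)\log_2 n$ bits on average and the total over the $\lfloor B/\log_2 n\rfloor$ coefficients respects the budget $B$, while the resulting quantization error $n^{-2D}$ remains negligible against the statistical variance $\eta/n$.
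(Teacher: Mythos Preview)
Your bias--variance decomposition, the variance bound of order $\eta/n$ per retained coefficient, and the balancing of $\eta$ match the paper's proof exactly; the rate you obtain, $L^{4/(2+2s)}(nB/\log_2 n)^{-2s/(2+2s)}$, is precisely what the paper derives in display \eqref{UB: alg2} before relaxing to the stated $M_n$-bound.

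Where you diverge is in checking the communication constraint, and there you have made the problem harder than it is and introduced a small inconsistency. You bound $|\hat f_{jk}^{(i)}|$ pathwise via $\|\psi_{jk}\|_\infty\asymp 2^{j/2}$, which leaves you with an integer part of roughly $j/2$ bits, and you then propose to absorb this by taking $D$ ``slightly smaller than $1$''. But your own estimate gives $j/2\le(2+4s)^{-1}\log_2 n$ at the top resolution level, so to keep the expected total under $\log_2 n$ you would actually need $D\le (1+4s)/(2+4s)$, which for small $s$ is near $1/2$, not near $1$. A careful tuning of $D$ in the window $[(1+2s)/(4+4s),\,(1+4s)/(2+4s)]$ does salvage the argument, but this is unnecessary.

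The observation the paper uses is that Lemma~\ref{lem: approx2} only needs a bound on the \emph{expectation} of $1\vee\log_2|\hat f_{jk}^{(i)}|$, and for that one should work with $\|\psi_{jk}\|_2=1$ rather than $\|\psi_{jk}\|_\infty$. By the elementary inequality $1\vee\log_2|x|\le 1+|x|$, the triangle inequality, and Cauchy--Schwarz,
\begin{align*}
\mathbb{E}_{f_0,T}\big(1\vee\log_2|\hat f_{jk}^{(i)}|\big)
&\le 1+\mathbb{E}_{f_0,T}|\hat f_{jk}^{(i)}|
\le 1+\mathbb{E}_{f_0,T}\big|X_1^{(i)}\psi_{jk}(T_1^{(i)})\big|\\
&\le 1+\|f_0\|_2\|\psi_{jk}\|_2+\|\psi_{jk}\|_2\,\mathbb{E}|\eps_1^{(i)}|=O(1),
\end{align*}
uniformly in $j,k$. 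Hence Lemma~\ref{lem: approx2} applies directly with $D=1/2$: each coefficient costs $(1/2+o(1))\log_2 n$ expected bits, the $\lfloor B/\log_2 n\rfloor$ coefficients per machine fit inside $B$, and the quantization error $n^{-1/2}$ is already negligible against $\eta/n$ without any further adjustment of $D$.
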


\begin{proof}
See Section \ref{sec: minimaxL2UB2}
\end{proof}



\subsection{Distributed minimax results for $L_\infty$-risk}

When we replace the $L_2$-norm by the  $L_{\infty}$-norm and correspondingly 
change the type of Besov balls we consider, we can derive 
a lower bound similar to Theorem \ref{theorem: minimaxL2LB} (see Section \ref{sec: wavelets} in the supplement for the rigorous definition of Besov balls).

\begin{theorem}\label{theorem: minimaxLinftyLB}
Consider   $s, L> 0$, communication constraints $B^{(1)}, \ldots, B^{(m)} > 0$ and assume that $\log_2n \le m=O(n^{\frac{2s}{1+2s}}/\log^2 n)$.
Let the sequence $\delta_n=o(1)$ be defined as the solution to the equation
\eqref{def: delta_n}.
Then in the distributed random design regression model \eqref{model: regression} we have that
\begin{align*}
\inf_{\hat{f}\in\mathcal{F}_{dist}(B^{(1)},\ldots,B^{(m)}; B_{\infty,\infty}^s(L))}
\, \sup_{f_0\in  B_{\infty,\infty}^s(L)} \mathbb{E}_{f_0,T}\|\hat{f}-f_0\|_{\infty} \gtrsim  \Big(\frac{n}{\log n}\Big)^{-\frac{s}{1+2s}} \vee \delta_n^{\frac{s}{1+2s}}.
\end{align*}
\end{theorem}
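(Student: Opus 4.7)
The claimed rate is the maximum of a non-distributed piece $(n/\log n)^{-s/(1+2s)}$ and a distributed piece $\delta_n^{s/(1+2s)}$; I would establish each as a lower bound separately and then take the maximum.

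For the non-distributed piece, any $\hat f \in \mathcal{F}_{dist}(B^{(1)},\ldots,B^{(m)};B^s_{\infty,\infty}(L))$ is a fortiori a measurable function of the full data (through the transmitted messages), so $\mathcal{F}_{dist}$ is a subset of all measurable estimators. Hence the distributed minimax is bounded below by the classical $L_\infty$ minimax over $B^s_{\infty,\infty}(L)$ from $n$ i.i.d.\ regression observations, which is of order $(n/\log n)^{-s/(1+2s)}$; this follows from the textbook Fano argument with $2^{j}$ single-bump hypotheses at the resolution $j \asymp (1+2s)^{-1}\log_2(n/\log n)$.

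For the distributed piece I would recycle the packing $\{f_\omega : \omega \in \Omega\}$ constructed in the proof of Theorem \ref{theorem: minimaxL2LB}: its non-zero wavelet coefficients live at a single effective resolution $j_n \asymp (1+2s)^{-1}\log_2\delta_n^{-1}$ and take values $\pm c$ with $c \asymp L\,2^{-j_n(s+1/2)}$, and the packing is pairwise $L_2$-separated by a constant multiple of $L\,\delta_n^{s/(1+2s)}$. Crucially, at a single active scale the wavelet characterizations of $B^s_{2,\infty}$ and $B^s_{\infty,\infty}$ impose the identical coefficient bound $2^{j(s+1/2)}|c|\lesssim L$, so up to an absolute constant depending only on the wavelet basis the same family lies in $B^s_{\infty,\infty}(L)$; and the pointwise inequality $\|g\|_\infty \geq \|g\|_2$ on $[0,1]$ shows the packing remains separated in $L_\infty$ by at least the same order.

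The distributed-Fano / Shannon-source-coding machinery of the proof of Theorem \ref{theorem: minimaxL2LB}, in which the communication budgets $B^{(i)}$ enter through the upper bound on the mutual information between the hypothesis $\omega$ and the transmitted strings $Y^{(1)},\ldots,Y^{(m)}$, then applies verbatim and yields, for some $\omega^\ast \in \Omega$ and an absolute constant $c_0 > 0$,
\[
\mathbb{P}_{f_{\omega^\ast},T}\bigl(\|\hat f - f_{\omega^\ast}\|_2 \gtrsim L\,\delta_n^{s/(1+2s)}\bigr) \geq c_0
\]
uniformly over $\hat f \in \mathcal{F}_{dist}$; combining with $\|\hat f - f_{\omega^\ast}\|_\infty \geq \|\hat f - f_{\omega^\ast}\|_2$ and taking expectations gives $\mathbb{E}_{f_{\omega^\ast},T}\|\hat f - f_{\omega^\ast}\|_\infty \gtrsim L\,\delta_n^{s/(1+2s)}$, and passing to the sup over $f_0 \in B^s_{\infty,\infty}(L)$ and the inf over $\hat f$ completes the argument. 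The main points requiring care are the Besov-norm bookkeeping needed to reinterpret the $L_2$-packing as an $L_\infty$-Besov packing, and extracting from the $L_2$ Fano step a first-moment probability statement rather than the usual $\mathbb{E}\|\cdot\|_2^2$ bound (whose square root would lose constants via Jensen), so that the pointwise norm comparison can be applied inside the expectation; the information-theoretic heart of the proof — distributed Fano with the correct communication penalty encoded by $\delta_n$ — is wholly inherited from Theorem \ref{theorem: minimaxL2LB}.
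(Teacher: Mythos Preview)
Your proposal is correct and reaches the same conclusion, but the route for the distributed piece differs from the paper's in one respect worth noting.

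The paper, like you, reuses the packing $\FF_0=\{f_\beta\}$ from the proof of Theorem~\ref{theorem: minimaxL2LB} and checks $\FF_0\subset B^s_{\infty,\infty}(L)$. However, instead of going through the $L_2$ Fano and then invoking $\|\cdot\|_\infty\ge\|\cdot\|_2$, the paper observes that because the selected wavelets $\psi_{j_n,k}$, $k\in K_{j_n}$, have \emph{disjoint supports}, any two distinct $f_\beta,f_{\beta'}$ satisfy
\[
\|f_\beta-f_{\beta'}\|_\infty \ge |f_{\beta,j_nk}-f_{\beta',j_nk}|\,\|\psi_{j_n,k}\|_\infty \gtrsim 2^{j_n/2}\delta_n^{1/2}\asymp \delta_n^{s/(1+2s)},
\]
so the entire packing is pairwise $L_\infty$-separated at the target scale. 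This lets the paper apply the \emph{standard} Fano inequality (Theorem~\ref{thm: fano}) directly in $L_\infty$, rather than the ball-counting version needed for $L_2$.

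Your reduction works, but one remark in your write-up is inaccurate: the packing is \emph{not} pairwise $L_2$-separated by order $\delta_n^{s/(1+2s)}$ --- neighbours differing in a single coordinate are only $\asymp L\delta_n^{1/2}$ apart in $L_2$, which is strictly smaller. This is precisely why Theorem~\ref{theorem: minimaxL2LB} needs the ball-counting Fano (Theorem~\ref{lem: Fano:Wainwright}) rather than the standard one. Your argument does not actually use that separation claim --- you correctly extract the probability statement from the ball-counting Fano proof and then apply $\|\cdot\|_\infty\ge\|\cdot\|_2$ to $\hat f-f_{\omega^*}$ --- so the logic survives, but you should drop or correct that sentence. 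The paper's route avoids this detour entirely: the disjoint-support observation upgrades the per-coordinate separation by a factor $2^{j_n/2}$, which is exactly what turns $\delta_n^{1/2}$ into $\delta_n^{s/(1+2s)}$ and makes the whole packing uniformly separated in $L_\infty$.
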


\begin{proof}
See Section \ref{sec: minimaxLinftyLB}
\end{proof}

The proof of the theorem is very similar to the proof of Theorem \ref{theorem: minimaxLinftyLB}. The first term on the right hand side follows from the usual non-distributed  
minimax  lower bound. For the second term we use the standard version of Fano's inequality. We again consider a large enough finite subset of $B_{\infty,\infty}^s{(L)}$. The effective resolution level for the $L_{\infty}$-norm in the non-distributed case is $(1+2s)^{-1}\log_2(n/\log_2 n)$. Similarly to the $L_2$ case the effective resolution level changes to $(1+2s)^{-1}\log\delta_n^{-1}$ in the distributed setting, which can be again substantially different from the non-distributed case. The rest of the proof follows the same lines reasoning as the proof of Theorem \ref{theorem: minimaxLinftyLB}. 

Similarly to the $L_2$-norm we consider again the specific case where all communication budgets are taken to be equal, i.e. $B^{(1)}=B^{(2)}=...=B^{(m)}=B$. One can easily see that there are again three regimes of $B$ (slightly different compared to the $L_2$-case).

\begin{corollary}\label{cor: minimaxLB_Linfty}
Consider   $s, L> 0$,  communication constraint $B^{(1)}=...=B^{(m)}=B> 0$ and assume that $\log_2 n \le m=O(n^{\frac{2s}{1+2s}}/\log^2 n)$.
\begin{enumerate}[label=(\roman*b)]
\item
If $B\geq \big(n/(\log_2 n)^{3+4s}\big)^{1/(1+2s)}$, then  \label{(ib)}
\begin{align*}
\inf_{\hat{f}\in\mathcal{F}_{dist}(B,\ldots,B; B_{\infty,\infty}^{s}(L))}\,
\sup_{f_0\in B_{\infty,\infty}^s(L)} \mathbb{E}_{f_0,T}\|\hat{f}-f_0\|_{\infty} \gtrsim (n/\log_2 n)^{-\frac{s}{1+2s}}.
\end{align*}
\item
If  $(n\log_2 (n)/m^{2+2s})^{{1}/({1+2s})}\leq B <  \big(n/(\log_2 n)^{3+4s}\big)^{1/(1+2s)}$, then \label{(iib)}
\begin{align*}
\inf_{\hat{f}\in\mathcal{F}_{dist}(B,\ldots,B; B_{\infty,\infty}^{s}(L))}\,
\sup_{f_0\in B_{\infty,\infty}^s(L)} \mathbb{E}_{f_0,T}\|\hat{f}-f_0\|_{\infty} \gtrsim   \big(\frac{n^{\frac{1}{1+2s}}}{B(\log_2 n)^{\frac{3+4s}{1+2s}}}\big)^{\frac{s}{2+2s}} (\frac{n}{\log_2 n})^{-\frac{s}{1+2s}}.
\end{align*}
\item
If $(n\log_2 (n)/m^{2+2s})^{{1}/({1+2s})}> B$\label{(iiib)}, then
\begin{align*}
\inf_{\hat{f}\in\mathcal{F}_{dist}(B,\ldots,B; B_{\infty,\infty}^{s}(L))}\, \sup_{f_0\in B_{\infty,\infty}^s(L)} 
\mathbb{E}_{f_0,T}\|\hat{f}-f_0\|_{\infty} \gtrsim   \Big(\frac{n\log_2 n}{m}\Big)^{-\frac{s}{1+2s}}.
\end{align*}
\end{enumerate}
\end{corollary}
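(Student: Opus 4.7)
The plan is to deduce this corollary directly from Theorem \ref{theorem: minimaxLinftyLB} by specializing the defining equation \eqref{def: delta_n} to the case of equal communication budgets $B^{(1)}=\cdots=B^{(m)}=B$ and then comparing the two terms in the lower bound $(n/\log n)^{-s/(1+2s)} \vee \delta_n^{s/(1+2s)}$. The strategy is exactly parallel to the derivation of Corollary \ref{cor: minimaxLB} from Theorem \ref{theorem: minimaxL2LB}; the only new ingredient is the explicit presence of the non-distributed floor $(n/\log n)^{-s/(1+2s)}$, which shifts one of the two break-points.

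First I note that with identical budgets \eqref{def: delta_n} reduces to
\begin{align*}
\delta_n = \min\Big\{\frac{m}{n\log_2 n},\; \frac{1}{n\big[(\log_2(n)\,\delta_n^{1/(1+2s)} B) \wedge 1\big]}\Big\}.
\end{align*}
Monotonicity of both sides in $\delta_n$ identifies three algebraic regimes for $B$, according to which branch of the right-hand side is active. For $B$ very small the first branch is minimal and $\delta_n = m/(n\log_2 n)$; for $B$ very large the inner saturation $\wedge 1$ dominates and $\delta_n = 1/n$; in between, equating $\delta_n$ with the second branch (with the $\wedge 1$ inactive) yields $\delta_n = (nB\log_2 n)^{-(1+2s)/(2+2s)}$. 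The crossover points are exactly $B = (n\log_2(n)/m^{2+2s})^{1/(1+2s)}$ and $B = n^{1/(1+2s)}/\log_2 n$, as in the $L_2$ analysis.

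Next I will compute $\delta_n^{s/(1+2s)}$ on each algebraic regime and take the maximum with $(n/\log_2 n)^{-s/(1+2s)}$. The only step requiring a short calculation is to locate the point where the non-distributed floor overtakes the intermediate branch: equating $(nB\log_2 n)^{-s/(2+2s)} = (n/\log_2 n)^{-s/(1+2s)}$ yields $B = (n/(\log_2 n)^{3+4s})^{1/(1+2s)}$, which is the threshold separating case \ref{(ib)} from case \ref{(iib)} and explains the logarithmic discrepancy with the $L_2$ break-point. Above this threshold the non-distributed floor dominates and gives the bound in \ref{(ib)}. Between the two thresholds the intermediate branch dominates and, after the algebraic simplification $(nB\log_2 n)^{-s/(2+2s)} = \big(\frac{n^{1/(1+2s)}}{B(\log_2 n)^{(3+4s)/(1+2s)}}\big)^{s/(2+2s)} (n/\log_2 n)^{-s/(1+2s)}$, produces the bound in \ref{(iib)}. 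Below the first threshold $\delta_n = m/(n\log_2 n)$, and the assumption $m \ge \log_2 n$ ensures $(n\log_2 n/m)^{-s/(1+2s)}$ exceeds the floor, yielding \ref{(iiib)}.

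There is no genuine obstacle beyond bookkeeping: the corollary is an elementary consequence of Theorem \ref{theorem: minimaxLinftyLB}, entirely parallel to Corollary \ref{cor: minimaxLB} aside from the extra maximum against the non-distributed minimax rate $(n/\log n)^{-s/(1+2s)}$, which is itself responsible for the modified location of the high-budget threshold.
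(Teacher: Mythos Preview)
Your proposal is correct and follows exactly the approach the paper intends: the paper does not write out a proof of this corollary, saying only ``One can easily see that there are again three regimes of $B$ (slightly different compared to the $L_2$-case)'', and you have filled in precisely the calculation it alludes to, identifying the algebraic regimes for $\delta_n$ from Corollary~\ref{cor: minimaxLB} and then taking the maximum with the non-distributed floor $(n/\log n)^{-s/(1+2s)}$.

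One small slip: in case \ref{(iiib)} you write that $m\ge\log_2 n$ ensures $(n\log_2 n/m)^{-s/(1+2s)}$ exceeds the floor. That would require $m\ge(\log_2 n)^2$, not $m\ge\log_2 n$. But this does not matter: since the lower bound from Theorem~\ref{theorem: minimaxLinftyLB} is a maximum, $\delta_n^{s/(1+2s)}$ is itself always a valid lower bound, and you can simply drop the comparison with the floor in case \ref{(iiib)}.
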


Next we provide matching upper bounds (up to a $\log n$ factor) in the first two cases, i.e. \ref{(ib)} and \ref{(iib)}. In the third case the lower bound matches (up to a logarithmic factor) the minimax rate corresponding to a single local machine, hence it is not advantageous at all to develop complicated distributed techniques as a single server with only fraction of the total information performs at least as well. In the previous section dealing with $L_2$ estimation we have provided two algorithms (one where the machines had the same tasks and one where the machines were divided into groups and were assigned different tasks) to highlight the differences between the cases. Here for simplicity we combine the algorithms to a single one, but essentially the same techniques are used as before. 

In each local machine we compute the local estimators of the wavelet coefficients $\hat{f}_{jk}^{(i)}$ and transmit a finite digit approximations of them $Y_{jk}^{(i)}$, as in the $L_2$-case. Then let us divide the machines into $\eta = \lfloor\big(n(\log_2 n)^{2s}/B^{1+2s}\big)^{\frac{1}{2+2s}}\rfloor \wedge m\vee 1$ equal sized groups ($\eta=1$ corresponds to case \ref{(ib)}, while $\eta>1$ corresponds to case \ref{(iib)}). Similarly to before machines with numbers $1\leq i\leq  m/\eta$ transmit the approximations $Y_{jk}^{(i)}$ of the estimated wavelet coefficients $\hat{f}_{jk}^{(i)}$ for $1\leq 2^{j}+k\leq \lfloor B/\log_2 n\rfloor\wedge (n/\log_2 n)^{\frac{1}{1+2s}}$, and so on, the last machines with numbers 
$(\eta-1) m/\eta< i\leq  m $  transmit the approximations $Y_{jk}^{(i)}$ for $(\eta-1)\lfloor B/\log_2 n\rfloor\wedge (n/\log_2 n)^{\frac{1}{1+2s}}< 2^{j}+k\leq \eta\lfloor B/\log_2 n\rfloor\wedge (n/\log_2 n)^{\frac{1}{1+2s}}$. In the central machine we average the corresponding transmitted coefficients in the obvious way, i.e.  the aggregated estimator $\hat f$ is the function with wavelet 
coefficients given by 
\begin{align*}
\hat{f}_{jk}=
\begin{cases}
mean\{Y_{jk}^{(i)}:\, \frac{\mu_{jk} m}{\eta} < i \leq \frac{(\mu_{jk}+1) m}{\eta}\},& \text{if $2^{j}+k\leq \eta \lfloor\frac{ B}{\log_2 n}\rfloor\wedge (\frac{n}{\log n})^{\frac{1}{1+2s}}$},\\
0, & \text{else},
\end{cases}
\end{align*}
where $\mu_{jk}= \big\lceil(2^{j}+k) \lfloor B/\log_2 n\rfloor^{-1} \big\rceil-1$.
The procedure is summarized as Algorithm \ref{alg: nonadapt:Linfty} and the (up to a logarithmic factor) optimal behaviour is given in 
Theorem \ref{theorem: minimaxLinftyUB} below.

\begin{algorithm}
\caption{Nonadaptive $L_\infty$-method, combined}\label{alg: nonadapt:Linfty}
\begin{algorithmic}[1]
\BState \textbf{In the local machines}:
\For{$\ell= 1$ to $\eta$ }
\For {$i =\lfloor (\ell-1) m/\eta\rfloor +1$ to $\lfloor\ell m/\eta\rfloor$}
\For {$ 2^{j}+k= (\ell-1)\lfloor  B/\log_2 n\rfloor+1$ to $\ell \lfloor B/\log_2 n\rfloor$}
\State \text{$Y_{jk}^{(i)}$ :=TransApprox($\hat{f}_{jk}^{(i)}$)}
\EndFor
\EndFor
\EndFor
\BState \textbf{In the central machine}:
\For {$ 2^j+k=1$ to $\eta\lfloor B/\log_2 n\rfloor$}
\State \text{$\hat{f}_{jk}:=mean\{Y_{jk}^{(i)}:\, \mu_{jk}  m/\eta<  i \leq (\mu_{jk}+1)  m/\eta \}$}
\EndFor
\State Construct: $\hat f = \sum \hat f_{jk}\psi_{jk}$.
\end{algorithmic}
\end{algorithm}

\begin{theorem}\label{theorem: minimaxLinftyUB}
Let $s, L > 0$. Then the distributed estimator $\hat f$ described 
 in Algorithm \ref{alg: nonadapt:Linfty} belongs to 
 $\mathcal{F}_{dist}(B,\ldots,B; B_{\infty,\infty}^{s}(L))$  and satisfies
\begin{itemize}
\item for $B\geq n^{1/(1+2s)}(\log_2 n)^{2s/(1+2s)}$,
\begin{align*}
\sup_{f_0\in B_{\infty,\infty}^{s}(L)}\mathbb{E}_{f_0,T}\| \hat{f}_n-f_0\|_\infty\lesssim (n/\log_2 n)^{-\frac{s}{1+2s}};
\end{align*}
\item for $\big(n(\log_2 n)/m^{2+2s}\big)^{{1}/({1+2s})} \vee \log_2 n\leq B< n^{1/(1+2s)}(\log_2 n)^{2s/(1+2s)}$,
\begin{align*}
\sup_{f_0\in B_{\infty,\infty}^{s}(L)}\mathbb{E}_{f_0,T}\| \hat{f}_n-f_0\|_\infty\lesssim  M_n\Big(\frac{n^{\frac{1}{1+2s}} }{B(\log_2 n)^{\frac{3+4s}{1+2s}}}\Big)^{\frac{s}{2+2s}}(n/\log_2 n)^{-\frac{s}{1+2s}},
\end{align*}
with $M_n=(\log_2 n)^{s\vee \frac{3s}{2+2s}}$.
\end{itemize}
\end{theorem}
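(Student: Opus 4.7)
I would establish the upper bound by the standard bias--variance decomposition for wavelet series estimators, supplemented by a quantisation error term. Let $\tilde f_{jk}$ denote the aggregated coefficient \emph{without} the rounding of Algorithm \ref{alg: transmit:number} (i.e.\ the mean of the exact $\hat f^{(i)}_{jk}$ over the appropriate group of machines), let $\mathcal{I}=\{(j,k):2^{j}+k\leq T\}$ with
\[
T:=\eta\lfloor B/\log_2 n\rfloor\wedge(n/\log_2 n)^{1/(1+2s)},
\]
and write $P_{\mathcal I}f_0=\sum_{(j,k)\in\mathcal I}f_{0,jk}\psi_{jk}$. I split $\hat f-f_0$ into (a) a quantisation term $\sum_{(j,k)\in\mathcal I}(\hat f_{jk}-\tilde f_{jk})\psi_{jk}$, (b) a stochastic term $\sum_{(j,k)\in\mathcal I}(\tilde f_{jk}-f_{0,jk})\psi_{jk}$, and (c) the approximation bias $P_{\mathcal I}f_0-f_0$. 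Thanks to the Besov characterisation $|f_{0,jk}|\lesssim L\,2^{-j(s+1/2)}$ and the compact-support bound $\sup_x\sum_k|\psi_{jk}(x)|\lesssim 2^{j/2}$, the approximation bias satisfies $\|P_{\mathcal I}f_0-f_0\|_\infty\lesssim L\,T^{-s}$.

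\textbf{Stochastic and quantisation terms.} Using the wavelet characterisation of the sup-norm,
\[
\Big\|\sum_{(j,k)\in\mathcal I}(\tilde f_{jk}-f_{0,jk})\psi_{jk}\Big\|_{\infty}\lesssim\sum_{j\leq J}2^{j/2}\max_{k:(j,k)\in\mathcal I}|\tilde f_{jk}-f_{0,jk}|,\qquad J\asymp\log_{2}T.
\]
Each $\tilde f_{jk}$ is the average of $m/\eta$ local estimators, each of which is itself a sample mean of $n/m$ i.i.d.\ copies of $X_\ell^{(i)}\psi_{jk}(T_\ell^{(i)})$, of common mean $f_{0,jk}$, variance of order $L^{2}+\sigma^{2}$ (using $\|f_0\|_\infty\lesssim L$ on $B^{s}_{\infty,\infty}(L)$ and wavelet orthonormality) and summands bounded by $(L+\sigma|\eps|)\|\psi_{jk}\|_\infty\lesssim(L+\sigma|\eps|)2^{j/2}$. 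A Bernstein inequality therefore gives
\[
|\tilde f_{jk}-f_{0,jk}|\lesssim\sqrt{\frac{(L^{2}+\sigma^{2})\eta\log T}{n}}+\frac{2^{j/2}\eta\log T}{n}
\]
with probability at least $1-T^{-c}$. Taking a union bound over the $\lesssim T$ indices in $\mathcal I$ and using $\sum_{j\leq J}2^{j/2}\lesssim 2^{J/2}$ yields the stochastic bound $\lesssim 2^{J/2}\sqrt{\eta\log T/n}$. The quantisation term is controlled by Lemma \ref{lem: approx2}: choosing the parameter $D$ there as a fixed constant keeps the expected transmitted length per coefficient $\lesssim\log_2 n$, so that the total expected bit-length per machine is $\leq B$ and $\hat f\in\FF_{dist}(B,\ldots,B;B^{s}_{\infty,\infty}(L))$, while the per-coefficient error $\leq n^{-D}$ contributes at most $2^{J/2}n^{-D}$ in sup-norm, which is of smaller order than the stochastic term.

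\textbf{Finalisation and main obstacle.} In Case 1 ($\eta=1$, $T\asymp(n/\log_2 n)^{1/(1+2s)}$) direct substitution shows that $L\,T^{-s}$ and $\sqrt{T\log T/n}$ are both of order $(n/\log_2 n)^{-s/(1+2s)}$, giving the first claim. In Case 2 the choice $\eta\asymp(n(\log_2 n)^{2s}/B^{1+2s})^{1/(2+2s)}$ is precisely what equates bias and stochastic rate; substituting $T\asymp(nB/(\log_2 n)^{2})^{1/(2+2s)}$ and simplifying shows that both terms are of order $(nB\log_{2}n)^{-s/(2+2s)}$, which matches the claim once the polylog factors are absorbed into $M_{n}=(\log_{2}n)^{s\vee 3s/(2+2s)}$. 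The expectation bound is obtained by integrating the high-probability bound and absorbing the $T^{-c}$ tail via a crude a priori sup-norm estimate on $\hat f-f_0$. The main technical obstacle is the Bernstein step: because the summands are unbounded (Gaussian noise) and have amplitude up to $2^{j/2}$, one has to verify for every $j\leq J$ that the Bernstein correction $2^{j/2}\eta\log T/n$ is dominated by the sub-Gaussian term $\sqrt{\eta\log T/n}$. This is exactly where the hypothesis $B\geq\log_{2}n$ (and in Case 2 the condition $B\geq(n\log_{2}n/m^{2+2s})^{1/(1+2s)}$) enters, and it is also the step that produces the extra polylog powers making up $M_{n}$.
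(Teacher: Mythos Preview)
Your proposal is correct and follows the same overall architecture as the paper: the same quantisation/stochastic/bias decomposition, the same wavelet $L_\infty$ characterisation $\|g\|_\infty\lesssim\sum_j 2^{j/2}\max_k|g_{jk}|$, and the same balancing of $T^{-s}$ against $\sqrt{T\eta\log n/n}$ in the two regimes.

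The one genuine methodological difference is in how the stochastic term $\max_k|\tilde f_{jk}-f_{0,jk}|$ is controlled. You apply a Bernstein inequality directly to the unconditioned summands $X_\ell^{(i)}\psi_{jk}(T_\ell^{(i)})$, take a union bound over the $\lesssim T$ coefficients, and then pass from a high-probability bound to an expectation bound by integrating the tail against a crude a priori estimate on $\|\hat f-f_0\|_\infty$. The paper instead \emph{conditions on the design} $T$: then $Z_{jk}\mid T$ is exactly Gaussian with explicit mean $\mu_{n,m,k,T}$ and variance $\sigma_{n,m,k,T}^2$, so the centred maximum is handled by the standard Gaussian maximal inequality, the random mean $\mu_{n,m,k,T}$ is controlled by Bernstein applied to the \emph{bounded} summands $f_0(T_\ell)\psi_{jk}(T_\ell)$, and the variance is controlled on a high-probability bin-counting event $\mathcal B_j$. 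This yields $\mathbb E\max_k|Z_{jk}|\lesssim\sqrt{\eta\log n/n}$ directly. The advantages of the paper's route are that (i) Bernstein is only applied to bounded variables, so no moment version is needed, and (ii) the expectation bound is obtained directly, without the high-probability-to-expectation step and the accompanying crude a priori bound. Your route also works (the moment form of Bernstein, Lemma~\ref{lem: Bernstein}, covers the sub-Gaussian summands, and a polynomial a priori bound on $\mathbb E\|\hat f-f_0\|_\infty^2$ is easy), but these two points are exactly the places where your sketch is thinnest, so if you write it up in full you should spell them out. Your identification of ``$B\ge\log_2 n$'' as the source of the Bernstein-correction control is slightly off: that hypothesis is primarily there so that $\lfloor B/\log_2 n\rfloor\ge 1$, while the correction term $2^{J}\eta\log n/n$ is negligible simply because $2^J\asymp T$ and $T\eta\log n/n\to 0$ in both regimes.
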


\begin{proof}
See Section \ref{sec: minimaxLinftyUB}
\end{proof}

We can draw similar conclusions for the $L_{\infty}$-norm as for the $L_{2}$-norm. If we do not transmit a sufficient amount of bits (at least $n^{1/(1+2s)}$ up to a $\log n$ factor) from the local machines to the central one then the lower bound from the theorem exceeds the minimax risk corresponding the non-distributed case. Furthermore by transmitting the sufficient amount of bits (i.e. $n^{1/(1+2s)}$ up to a $\log n$ factor) corresponding to the class $B_{\infty,\infty}^s(L)$, the lower bound will coincide with the non-distributed  minimax estimation rate.

\subsection{Adaptive distributed estimation}\label{sec: adapt:GWN}
The (almost) rate-optimal procedures  considered so far 
 have in common that they are non-adaptive, in the sense that they all use the knowledge of the regularity level $s$ of the unknown functional parameter of interest. 
In this section we  exhibit a distributed algorithm attaining the lower bounds 
(up to a logarithmic factor) across a whole range of regularities $s$ simultaneously. 
In the non-distributed setting it is well known that this is possible, and 
many adaptation methods exist, including for instance the block Stein method, Lepski's method, wavelet thresholding, 
and Bayesian adaptation methods, just to mention but a few (e.g. \cite{tsybakov:2009,gine:nickl:2016}). 
In the distributed case the matter is more complicated. Using the usual adaptive 
tuning methods in the local machines will typically not work (see \cite{szabo:zanten:2017})
and in fact it was recently conjectured that adaptation, if at all possible,  
would require more communication than is allowed in our model (see \cite{zhu:2018}).

We will show, however, that in our setting, if all machines have the same communication restriction 
given by $B\geq \log_2 n$, it is possible to adapt to regularities
$s$ ranging in the interval $[s_{\min}, s_{\max})$, where 
\begin{align}
s_{\min}=\arg\inf_{s>0}\lim\inf_n \Big\{(n(\log_2 n)^2/m^{2+2s})^{\frac{1}{1+2s}}\leq B\Big\}\label{def: smin}
\end{align}
and  $s_{\max}$ is the regularity of the considered Daubechies wavelet and can be chosen arbitrarily large. Note  that $s_{\min}$ is well defined. 
If $s\in [s_{\min}, s_{\max})$, then we are in one of the non-trivial cases 
\ref{(i)} or \ref{(ii)} of Corollary \ref{cor: minimaxLB}.
We will construct a distributed method which, up to logarithmic factors, attains 
the corresponding lower bounds, without using knowledge about the regularity level $s$.

\begin{remark}
We provide some examples for the value of $s_{\min}$ for different choices of $B$ and $m$. Taking $m=\sqrt{n}$ we have for all $B\geq \log_2 n$ that $s_{\min}=0$. For $m=\log n$ and $B=\sqrt{n}$ we get $s_{\min}=1/2$. For $m=\log n$ and $B=\log_2 n$ we have that $s_{\min}=\infty$. Note that it is intuitively clear that in case the number of machines is large, then it is typically advantageous to use a distributed method compared to a single local machine as we would lose too much information in the later case. However, if we have a small number of machines and can transmit only a very limited amount of information, then it might be more advantageous to use only a single machine to make inference.
\end{remark}

In the non-adaptive case we saw that different strategies were required to attain 
the optimal rate, case \ref{(ii)} requiring a particular grouping of the local machines. 
The cut-off between cases \ref{(i)} and \ref{(ii)} depends, however, on the value of $s$, 
so in the present adaptive setting we do not know beforehand in which of the two cases
we are. 
In order to tackle this problem we introduce a somewhat more involved grouping of the 
machines, which basically gives us the possibility to carry out both  strategies simultaneously.
This is combined with a modified version of Lepski's method, carried out in the 
central machine, ultimately leading to (nearly) optimal distributed concentration rates for every regularity class $s\in [s_{\min}, s_{\max})$, simultaneously. (We note that in our 
distributed regression setting, deriving an appropriate version of 
Lepski's method requires some non-standard technical work, see Section \ref{sec: adaptive}).

As a first step in our adaptive procedure we divide the machines into groups. To begin with, let us take the first $\lfloor m/2\rfloor$ machines and denote the set of their index numbers by $I$. Then 
the remaining $\lceil m/2\rceil$ machines are split into $\tilde{\eta}_n=j_{\max}-j_{B,n}$  equally sized groups (for simplicity each group has $\lfloor \lceil m/2 \rceil/\tilde\eta \rfloor$ machines and the leftovers are discarded), where
\begin{align*}
j_{B,n}&:= \lfloor \log_2 \lfloor B/\log_2 n\rfloor\rfloor\\
j_{\max}&:=\lceil  (2+2s_{\min})^{-1}\log_2 (nB) \rceil\wedge \lceil (1+2s_{\min})^{-1}\log_2 n  \rceil.
\end{align*}
The corresponding sets of indexes
are denoted by $I_0,I_1,\ldots, I_{\tilde\eta-1}$. Note that $|I_t|\asymp m/\log_2 n$, for $t\in\{0,...,\tilde\eta-1\}$. Then the machines in the group $I$ transmit the approximations $Y_{jk}^{(i)}$ (with $D=1/2$ in Algorithm \ref{alg: transmit:number}) of the local estimators of the wavelet coefficients  $\hat{f}_{jk}^{(i)}$, for $0\leq j\leq j_{B,n}-1, k=0,...,2^j-1$, with 

 to the central machine.  The machines in group $I_t$, $t\in\{0,...,\tilde\eta-1\}$, will be responsible for transmitting the coefficients at resolution level $j=j_{B,n}+t$. First for every $t\in\{0,\ldots,\tilde\eta-1\}$, the machines in group $I_t$ are split again into $2^{t}$ equal size groups (for simplicity each group has $\lfloor2^{-t}\lfloor \lceil m/2 \rceil/\tilde\eta \rfloor\rfloor\geq 1$ machines and the leftovers are discarded again), denoted by $I_{t,1},I_{t,2},\ldots,I_{t,2^{t}}$.  A machine $i$ in one of the groups  $I_{t,\ell}$
for $\ell\in\{1,...,2^{t}\}$ transmits the approximations $Y_{jk}^{(i)}$ (again with $D=1/2$ in Algorithm \ref{alg: transmit:number})  of the local estimators of the wavelet coefficients  $\hat{f}_{jk}^{(i)}$, for $j=j_{B,n}+t$ and $(\ell-1)2^{j_{B,n}}\leq   k<  \ell 2^{j_{B,n}}$ to the central machine. 

In the central machine we first average  the transmitted approximations of the corresponding coefficients. We define
\begin{equation}\label{def: y_jk}
\hat{f}_{jk}=
\begin{cases}
|I|^{-1} \sum_{i\in I} Y_{jk}^{(i)} & \text{if  $j<j_{B,n}, k=0,...,2^{j}-1$,}\\
 |I_{t,\ell}|^{-1} \sum_{i\in I_{t,\ell}} Y_{jk}^{(i)}& \text{if $j_{B,n}\leq j\leq j_{B,n}+\tilde{\eta}$, $k=0,...,2^{j}-1$.}
\end{cases}
\end{equation}
Using these coefficients we can construct for every $j$ the preliminary estimator 
\begin{align}
\tilde{f}(j)=\sum_{l\leq j-1}\sum_{k=0}^{2^l-1}  \hat{f}_{jk}\psi_{jk}.\label{def:tilde:f}
\end{align}
This gives us a sequence of estimators from which we
select the appropriate one 
using a  modified version of Lepski's method.  
We consider 
$\mathcal{J}=\{0,...,j_{\max}\}$ and  define $\hat{j}$ as
\begin{align}
\hat{j}=\min\big\{ j\in \mathcal{J}:   \|\tilde{f}(j)-\tilde{f}(l)\|_2^2\leq \tau 2^l/n_l,\, \forall l>j,\, l\in\mathcal{J}  \big\},
\end{align}
for some sufficiently large parameter $\tau>1$ and $n_j=|I_{j-j_{B,n},1}| n/m\asymp \frac{nB}{2^{j}(\log_2 n)^2}$, for $j\geq j_{B,n}$ and $n_j=|I|n/m\asymp n$ for $j<j_{B,n}$. Then we construct our final estimator $\hat{f}$ simply by taking $\hat{f}=\tilde{f}(\hat{j})$.

We summarize the above procedure (without discarding servers for achieving exactly equal size subgroups) in Algorithm \ref{alg: adapt:L2}, below.

\bigskip

\begin{algorithm}
\caption{Adaptive $L_2$-method}\label{alg: adapt:L2}
\begin{algorithmic}[1]
\BState \textbf{In the local machines}:
\For {$i=1$ to $\lfloor m/2\rfloor$}
\For {$ j= 0$ to $j_{B,n}-1$}
\For {$k=0$ to $2^j-1$}
\State \text{$Y_{jk}^{(i)}$ :=TransApprox($\hat{f}_{jk}^{(i)}$)}
\EndFor
\EndFor
\EndFor
\For{$t= 0$ to $\tilde\eta-1$ }
\For{$\ell =1$ to $2^{t}$}
\For {{$i =\lfloor m/2 \rfloor +t\Big\lfloor \frac{\lceil m/2 \rceil}{\tilde{\eta}}\Big\rfloor+ (\ell-1)\Big\lfloor2^{-t}\Big\lfloor\frac{\lceil m/2 \rceil}{\tilde{\eta}}\Big\rfloor\Big\rfloor+1$ to $\lfloor m/2 \rfloor +t\Big\lfloor \frac{\lceil m/2 \rceil}{\tilde{\eta}}\Big\rfloor+ \ell\Big\lfloor2^{-t}\Big\lfloor\frac{\lceil m/2 \rceil}{\tilde{\eta}}\Big\rfloor\Big\rfloor$}}
\For {$j=j_{B,n}$  to $j_{B,n}+\tilde{\eta}-1$}
\For {$k=0$ to $2^j-1$}
\State \text{$Y_{jk}^{(i)}$ :=TransApprox($\hat{f}_{jk}^{(i)}$)}
\EndFor
\EndFor
\EndFor
\EndFor
\EndFor
\BState \textbf{In the central machine}:
\BState \textit{(1) Averaging the local observations}:
\For{$j=0$ to $j_{B,n}-1$}
\For {$k=0$ to $2^j-1$}
\State \text{$\hat{f}_{jk}:=mean\{Y_{jk}^{(i)}: i\leq m/2$\}}
\EndFor
\EndFor
\For{$t= 0$ to $\tilde\eta-1$ }
\State{$j:=j_{B,n}+t$}
\For{$\ell =1$ to $2^{t}$}
\For{$k=(\ell-1)2^{j_{B,n}}$ to   $\ell 2^{j_{B,n}}-1$}

\State {\text{$\hat{f}_{jk}:=mean\Big\{Y_{jk}^{(i)}:\,  \lfloor \frac{m}{2} \rfloor +t\Big\lfloor \frac{\lceil m/2 \rceil}{\tilde{\eta}}\Big\rfloor+ (\ell-1)\Big\lfloor2^{-t}\Big\lfloor\frac{\lceil m/2 \rceil}{\tilde{\eta}}\Big\rfloor\Big\rfloor<  i\leq$}
\State \qquad\qquad\qquad\qquad\qquad\text{ $\leq  \lfloor \frac{m}{2} \rfloor +t\Big\lfloor \frac{\lceil m/2 \rceil}{\tilde{\eta}}\Big\rfloor+ \ell\Big\lfloor2^{-t}\Big\lfloor\frac{\lceil m/2 \rceil}{\tilde{\eta}}\Big\rfloor\Big\rfloor\Big\}$}}

\EndFor
\EndFor
\EndFor
\BState \textit{(2) Lepski's method}:
\For{ {$j=0$} to $j_{\max}$}
\State \text{ $\tilde{f}(j):=\sum_{l\leq j-1}\sum_{k=0}^{2^j-1} \hat{f}_{jk}\psi_{jk}$}
\EndFor
\State{ Let $\hat{j}:=j_{\max}$, $stop:=FALSE$}
\While{ $stop==FALSE$ and $\hat{j}\geq  0$}
\State{Let $l:=\hat{j}+1$}
\While{ $stop== FALSE$ and $l\leq j_{\max}$}
\If{$\|\tilde{f}(j)-\tilde{f}(l)\|_2^2\leq \tau 2^l/ n_l$}
\State{$l:=l+1 $}
\Else{ $stop:=TRUE$}
\EndIf
\EndWhile
\If{$stop==FALSE$}
\State{$\hat{j}:=\hat{j}-1$}
\EndIf
\EndWhile
\State Construct: $\hat f = \tilde{f}(\hat{j})$.
\end{algorithmic}
\end{algorithm}

\newpage

\begin{theorem}\label{theorem: adaptive}
For every $L,s>0$ the distributed method $\hat f$ described above belongs to $\mathcal{F}_{dist}(B,\ldots,B; B_{2,\infty}^{s}(L))$ and for all $s\in [s_{\min},s_{\max})$

\begin{equation*}
\sup_{f_0\in B_{2,\infty}^s(L)}\mathbb{E}_{f_0,T}\|\hat{f}- f_0\|_2 \lesssim 
\begin{cases}
n^{-s/(1+2s)},&\text{if  $B\geq  C_L n^{1/(1+2s)}\log_2 n$,}\\
M_n\Big(\frac{n^{1/(1+2s)} }{B\log_2 n}\Big)^{\frac{s}{2+2s}}n^{-\frac{s}{1+2s}}, &\text{if $B< C_L n^{1/(1+2s)}\log_2 n$,}\\
\end{cases}
\end{equation*}
with $C_L= 4L^{2/(1+2s)}$ and $M_n=(\log_2 n)^{3s/(2+2s)}$.
\end{theorem}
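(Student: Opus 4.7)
The plan is to run a Lepski-type argument adapted to the quantization and grouping structure of Algorithm \ref{alg: adapt:L2}. Write $\hat f = \tilde f(\hat j)$ and introduce the ``oracle'' resolution level
\[
j^*(s) = \operatorname*{arg\,min}_{j\in\mathcal{J}}\bigl\{ 2^{-2sj} + 2^j/n_j\bigr\},
\]
which, using $n_j\asymp n$ for $j<j_{B,n}$ and $n_j\asymp nB/(2^j(\log_2 n)^2)$ for $j\geq j_{B,n}$, satisfies $2^{j^*(s)}\asymp n^{1/(1+2s)}$ in the first regime and $2^{j^*(s)}\asymp (nB/(\log_2 n)^2)^{1/(2+2s)}$ in the second. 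In both cases $2^{j^*(s)}/n_{j^*(s)}$ matches, up to logarithmic factors, the target rate stated in the theorem. The fact that $s\in[s_{\min},s_{\max})$ places $j^*(s)\in\mathcal{J}$ on the high-probability event, which is what makes the adaptation range in the statement the right one.

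First I would establish the bias-variance decomposition at each fixed level. For $f_0\in B^s_{2,\infty}(L)$, the deterministic bias from truncating the wavelet expansion at level $j-1$ is $\|\EE_T\tilde f(j)-f_0\|_2^2\lesssim L^2 2^{-2sj}$. For the stochastic part, I would write $\hat f_{jk}-f_{0,jk}$ as the sum of (a) the average over the relevant group of machines of $\hat f^{(i)}_{jk}-f_{0,jk}$, which is a centered sum of bounded/sub-Gaussian variables with variance $\lesssim (m/n)/|{\rm group}|$, and (b) the quantization error bounded by $n^{-1/2}$ uniformly via Lemma \ref{lem: approx2} (since $\|f_0\|_\infty$ is finite on $B^s_{2,\infty}(L)\cap L^\infty$ and $\EE(1\vee\log_2|\hat f_{jk}^{(i)}|)=o(\log_2 n)$). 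Summing variances over $k$ at each level $j$ and then over $l\le j-1$ yields $\EE_T\|\tilde f(j)-\EE_T\tilde f(j)\|_2^2\lesssim 2^j/n_j$. Using a Bernstein/Gaussian concentration inequality applied to each of the $O(2^j)$ coordinates I would then upgrade this to the high-probability bound
\[
\max_{j\in\mathcal{J}} \frac{n_j}{2^j\log_2 n}\,\|\tilde f(j)-\EE_T\tilde f(j)\|_2^2 \lesssim 1
\]
with probability $\geq 1-n^{-c}$; call this event $\mathcal{E}$.

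The core of the argument is to show on $\mathcal{E}$ that $\hat j\leq j^*(s)$ for $\tau$ large enough. If instead $\hat j>j^*(s)$, the selection rule must have failed at $j^*(s)$, i.e.\ there exists some $l>j^*(s)$, $l\in\mathcal{J}$, with $\|\tilde f(j^*(s))-\tilde f(l)\|_2^2>\tau 2^l/n_l$. Applying the triangle inequality, inserting $\EE_T\tilde f(j^*(s))$ and $\EE_T\tilde f(l)$, and using that (i) the bias at $j^*(s)$ is $\lesssim 2^{j^*(s)}/n_{j^*(s)}\le 2^l/n_l$ by definition of $j^*(s)$ and monotonicity of $l\mapsto 2^l/n_l$ on each of the two regimes, (ii) the stochastic deviation at levels $j^*(s)$ and $l$ is bounded on $\mathcal{E}$ by a constant times $(2^l/n_l)\log_2 n$, and (iii) the quantization error contributes at most $n^{-1/2}$, leads to a contradiction once $\tau$ exceeds a constant multiple of $\log_2 n$ (absorbed into the logarithmic slack $M_n$ in the statement). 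With $\hat j\leq j^*(s)$ on $\mathcal{E}$, the selection rule evaluated at $\hat j$ with witness $l=j^*(s)$ yields $\|\tilde f(\hat j)-\tilde f(j^*(s))\|_2^2\leq \tau 2^{j^*(s)}/n_{j^*(s)}$, and hence
\[
\|\hat f-f_0\|_2 \leq \|\tilde f(\hat j)-\tilde f(j^*(s))\|_2+\|\tilde f(j^*(s))-f_0\|_2 \lesssim \sqrt{\tau\,2^{j^*(s)}/n_{j^*(s)}}+L\,2^{-sj^*(s)},
\]
which plugs into the two regimes to give the stated rates (up to the $M_n$ factor). On $\mathcal{E}^c$ one controls the error by a crude deterministic bound times $\PP(\mathcal{E}^c)\leq n^{-c}$, which is negligible.

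The communication budget check $\hat f\in\mathcal{F}_{dist}(B,\dots,B;B^s_{2,\infty}(L))$ is the easier accounting step: each machine transmits $O(2^{j_{\max}}\cdot 2^{-t}\log_2 n)$ bits by Lemma \ref{lem: approx2} with $D=1/2$, which by the definitions of $j_{B,n}$, $j_{\max}$ and the choices of the group sizes $|I_t|,|I_{t,\ell}|$ is bounded by a constant multiple of $B$. The main obstacle I expect is the concentration step producing the uniform deviation bound across $j\in\mathcal{J}$ with the right logarithmic dependence: the estimators $\hat f_{jk}^{(i)}$ are not exactly Gaussian because of the random design contribution $f_0(T_\ell^{(i)})\psi_{jk}(T_\ell^{(i)})$, so one needs a Bernstein-type inequality with variance proxy $\lesssim (m/n)\|\psi_{jk}\|_\infty^2\lesssim (m/n)2^j$, combined with a union bound over $O(2^{j_{\max}})$ coefficients and over $j\in\mathcal{J}$. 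Once that is in place, the Lepski bookkeeping above goes through routinely.
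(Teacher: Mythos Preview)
Your overall strategy---define an oracle level $j^*$ balancing bias and variance, split the risk on $\{\hat j\le j^*\}$ versus $\{\hat j>j^*\}$, and control the first part via the Lepski comparison $\|\tilde f(\hat j)-\tilde f(j^*)\|_2^2\le\tau 2^{j^*}/n_{j^*}$---is exactly the skeleton the paper uses. The communication-budget check is also fine (note each machine in $I$ or $I_{t,\ell}$ transmits $2^{j_{B,n}}\lesssim B/\log_2 n$ coefficients, not the $O(2^{j_{\max}}2^{-t}\log_2 n)$ you wrote; the accounting is simpler than you suggest).

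The substantive difference, and the place your argument loses the stated rate, is the concentration step. You propose a coordinate-wise Bernstein/Gaussian bound on each $Z_{jk}$ followed by a union bound over $O(2^j)$ coordinates. That yields $\sum_k Z_{jk}^2\lesssim (2^j\log_2 n)/n_j$ on your event $\mathcal E$, which is why you need $\tau$ of order $\log_2 n$ and end up with an extra $\sqrt{\log_2 n}$ in the rate. In the regime $B\ge C_L n^{1/(1+2s)}\log_2 n$ the theorem claims the clean rate $n^{-s/(1+2s)}$ with no logarithmic factor, so your argument as written does not deliver the statement. (Also, the ``variance proxy $\lesssim(m/n)\|\psi_{jk}\|_\infty^2$'' at the end is off: the variance of $f_0(T)\psi_{jk}(T)$ is $\lesssim\|f_0\|_\infty^2$, while $\|\psi_{jk}\|_\infty\lesssim 2^{j/2}$ enters only as the Bernstein envelope.)

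The paper avoids this loss by bounding $\mathbb P(\hat j=j)$ for each $j>j^*$ directly, controlling the full quadratic form $\sum_{k}Z_{rk}^2$ rather than individual coordinates (Lemma~\ref{lem: Lem:8.2.1redo}). The noise part $\sum_k\bigl(n_r^{-1}\sum_{i,\ell}\varepsilon_\ell^{(i)}\psi_{rk}(T_\ell^{(i)})\bigr)^2$ is handled by conditioning on $T$, showing the conditional covariance matrix is banded with entries $O(n_r^{-1})$ on a high-probability design event $\mathcal B_r$, and then applying a $\chi^2$ tail bound to get $\mathbb P(\sum_k\tilde Z_{rk}^2\ge C2^r/n_r)\lesssim e^{-c2^r}$. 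The design part $\sup_{g\in S(r)}\nu(g)^2$ is handled by a Talagrand-type inequality (Lemma~5 of \cite{lacour:08}) giving the same exponential decay in $2^r$. Together this produces $\mathbb P(\hat j=j)\lesssim e^{-c2^j}$ (plus polynomially small remainders), which after Cauchy--Schwarz and summing over $j>j^*$ is $o(2^{-j^*s})$ and permits fixed $\tau$. So: your Lepski bookkeeping is right, but you need the sum-of-squares concentration rather than the coordinate-wise route to match the stated constants.
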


\begin{proof}
See Section \ref{sec: adaptive}.
\end{proof}

\begin{remark}
Compared to the lower bound in Corollary \ref{cor: minimaxLB} one can observe that in case $B\geq  n^{1/(1+2s)}\log_2 n$ the upper bound is sharp, for $B<  n^{1/(1+2s)}\log_2 n$ we might get an extra slowly varying term of order at most $O((\log n)^{6s/(2s+2)})$.
\end{remark}

A slight modification of the above algorithm also leads to a (up to a logarithmic factor) minimax adaptive  estimation rate in the $L_\infty$-norm. We construct the truncation estimator $\tilde{f}(j)$ as in Algorithm \ref{alg: adapt:L2}, see \eqref{def:tilde:f}. The only difference to the $L_2$-case is that we introduce an extra $l$ term in the definition of $\hat{j}$, i.e.
\begin{align*}
\hat{j}=\min\big\{ j\in \mathcal{J}:   \|\tilde{f}(j)-\tilde{f}(l)\|_\infty\leq \tau \sqrt{l2^l/n_l},\, \forall l>j,\, l\in\mathcal{J}  \big\}.
\end{align*}
Finally we define $\hat{f}=\tilde{f}(\hat{j})$ and show below that it attains the nearly optimal minimax rate adaptively.

\begin{theorem}\label{theorem: adaptive:Linfty}
For every $L,s>0$ the distributed method $\hat f$ described above belongs to $\mathcal{F}_{dist}(B,\ldots,B; B_{\infty,\infty}^{s}(L))$. Furthermore for all $s\in [s_{\min},s_{\max})$,
\begin{equation*}
\sup_{f_0\in B_{\infty,\infty}^s(L)}\mathbb{E}_{f_0,T}\|\hat{f}- f_0\|_\infty \lesssim 
\begin{cases}
(n/\log_2 n)^{-s/(1+2s)},&\text{if  $B\geq  t_n$,}\\
M_n\Big(\frac{n^{1/(1+2s)} }{B(\log_2 n)^{\frac{3+4s}{1+2s}}}\Big)^{\frac{s}{2+2s}}(\frac{n}{\log_2 n})^{-\frac{s}{1+2s}} , &\text{if $ B<t_n $,}\\
\end{cases}
\end{equation*}
with $t_n= C_L (n/\log_2 n)^{\frac{1}{1+2s}}\log_2 n$, $C_L= 12 L^{2/(1+2s)}$, and $M_n=(\log_2 n)^{\frac{(5+8s)s}{(1+2s)(2+2s)}}$.
\end{theorem}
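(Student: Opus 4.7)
The argument mirrors the adaptation proof for the $L_2$-loss in Theorem~\ref{theorem: adaptive}, with the key difference that the $L_\infty$-norm replaces the $L_2$-norm in every variance-type bound. The additional factor $\sqrt{l}$ in the Lepski threshold $\tau\sqrt{l 2^l/n_l}$ is designed to absorb the union bound over the $2^l$ wavelet coefficients at resolution level $l$ that is needed in an $L_\infty$ analysis. Since the transmission protocol is the same as in Algorithm \ref{alg: adapt:L2}, the verification that $\hat f\in \mathcal{F}_{dist}(B,\ldots,B;B_{\infty,\infty}^{s}(L))$ is identical to the $L_2$ case.

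The first technical step is an $L_\infty$-concentration inequality for the preliminary estimator $\tilde f(j)$ defined in \eqref{def:tilde:f}. For each coefficient I would decompose
\begin{align*}
\hat f_{jk} - f_{0,jk} = Z_{jk} + r_{jk},
\end{align*}
where $Z_{jk}$ is the average of the $|I|$ (respectively $|I_{t,\ell}|$) centred local coefficients and $r_{jk}$ is the quantization residual, bounded by $n^{-D}$ via Lemma~\ref{lem: approx2}. Conditionally on the designs $T^{(i)}_\ell$, $Z_{jk}$ is Gaussian with variance of order $1/n_j$, so a standard Gaussian maximal inequality combined with a union bound yields $\max_{k<2^j}|Z_{jk}|\lesssim\sqrt{j/n_j}$ with probability at least $1-n^{-A}$ for $A$ arbitrarily large. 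Exploiting the compact support and the normalization $\|\psi_{jk}\|_\infty\lesssim 2^{j/2}$ of the Daubechies wavelets then produces
\begin{align*}
\Big\|\sum_{k<2^j}(\hat f_{jk}-\EE\hat f_{jk})\psi_{jk}\Big\|_\infty \lesssim \sqrt{j\,2^j/n_j}.
\end{align*}
Combining with the Besov bias bound $|f_{0,jk}|\lesssim L2^{-j(s+1/2)}$, which gives $\|\sum_{l\ge j}\sum_k f_{0,lk}\psi_{lk}\|_\infty\lesssim L2^{-js}$, I would conclude that
\begin{align*}
\|\tilde f(j)-f_0\|_\infty \lesssim L2^{-js} + \sqrt{j\,2^j/n_j}
\end{align*}
holds simultaneously for every $j\in\mathcal J$ on a common event of probability at least $1-j_{\max}n^{-A}$.

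The second step is the Lepski-type analysis. I would define the oracle level $j^*\in\mathcal J$ as the smallest $j$ for which $L2^{-js}\le \sqrt{j\,2^j/n_j}$; elementary computations, distinguishing $j^*<j_{B,n}$ (which corresponds to the regime $B\ge t_n$) and $j^*\ge j_{B,n}$ (regime $B<t_n$), show that $\sqrt{j^*2^{j^*}/n_{j^*}}$ matches the rate stated in the theorem, with the $M_n$ prefactor absorbing the accumulated logarithmic losses. Since $j\mapsto\sqrt{j\,2^j/n_j}$ is essentially increasing in $j$ (piecewise, with only a constant blow-up at the crossover $j=j_{B,n}$), the triangle inequality on the good event yields $\|\tilde f(j^*)-\tilde f(l)\|_\infty\lesssim \sqrt{l\,2^l/n_l}$ for all $l>j^*$, so choosing $\tau$ large enough forces $\hat j\le j^*$. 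Conversely, applying the defining property of $\hat j$ with the comparison level $j^*$ gives $\|\tilde f(\hat j)-\tilde f(j^*)\|_\infty\le \tau\sqrt{j^*2^{j^*}/n_{j^*}}$, so a second triangle inequality delivers
\begin{align*}
\|\hat f - f_0\|_\infty = \|\tilde f(\hat j)-f_0\|_\infty \lesssim \sqrt{j^*2^{j^*}/n_{j^*}}.
\end{align*}
Taking expectations and absorbing the polynomially small complementary event finishes the proof.

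The main obstacle is the sharp $L_\infty$-concentration of Step~1 with the desired $\sqrt{j\,2^j/n_j}$ scaling. The local estimators $\hat f_{jk}^{(i)}$ are not exactly Gaussian because of the random design, so one must either condition on the $T^{(i)}_\ell$ and control the conditional variance uniformly in $k$ via a Bernstein-type bound exploiting $\|\psi_{jk}\|_\infty\lesssim 2^{j/2}$, or establish a direct sub-Gaussian tail estimate; the union bound over the $2^j$ coefficients must cost no more than the planned $\sqrt{j}$ factor. A careful bookkeeping of all logarithmic terms arising from this union bound, the union bound over $j\in\mathcal J$, the definition of $n_j$ in the two regimes, and the Lepski overshoot is what ultimately yields the explicit factor $M_n=(\log_2 n)^{(5+8s)s/((1+2s)(2+2s))}$ in the statement.
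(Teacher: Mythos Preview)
Your proposal is correct and takes essentially the same approach as the paper: the same oracle $j^*$ balancing $L2^{-js}$ against $\sqrt{j\,2^j/n_j}$, the same Lepski argument on $\{\hat j\le j^*\}$, and the same $\max_k|Z_{jk}|$ concentration you flag as the main obstacle (which the paper handles by conditioning on the design, applying Bernstein's inequality for the conditional mean and Gaussian tails for the noise, cf.\ Lemma~\ref{lem: Lem:8.2.1redo:Linfty}). The only cosmetic difference is that the paper decomposes the risk as $\mathbb{E}\|\hat f-f_0\|_\infty(1_{\hat j\le j^*}+1_{\hat j>j^*})$ and controls the overshoot term via Cauchy--Schwarz together with a crude second-moment bound $\mathbb{E}\|\tilde f(j)-\mathbb{E}\tilde f(j)\|_\infty^2\lesssim 2^{3j}$, rather than working on a single high-probability good event as you do.
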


\begin{proof}
See Section \ref{sec: adaptive:Linfty}.
\end{proof}

\section{Proofs for the $L_{2}$-norm}

\subsection{Proof of Theorem \ref{theorem: minimaxL2LB}}\label{sec: minimaxL2LB}

Note that without loss of generality we can multiply $\delta_n$ with an arbitrary constant.
In the proof we define $\delta_n$ as the solution to
\begin{align}
\delta_n =\bar{C}^{-1}\min\Big\{\frac{m}{n \log_2 n }, \frac{m}{n \sum_{i=1}^m [(\delta_n^{\frac{1}{1+2s}}\log_2 (n) B^{(i)}) \wedge  1]} \Big\},\label{def: delta_n_new}
\end{align}
for some sufficiently large $\bar{C}\geq 1$ to be specified later. We prove the desired lower bound for the minimax risk using a 
modified version of Fano's inequality, given in Theorem \ref{lem: Fano:Wainwright}.
As a first step we construct a  finite subset $\FF_0 \subset B_{2,\infty}^s(L)$. 
We use the wavelet notation outlined in Section \ref{sec: wavelets} of the supplement and consider Daubechies wavelets with at least $s$ vanishing moments. Define $j_n=\lfloor(\log_2 \delta_n^{-1})/(1+2s)\rfloor$. Next we divide the interval $[0,1]$ into the partition of $2^{j_n}/\tilde{C}$ disjoint intervals $I_1,...,I_{2^{j_n}/\tilde{C}}$ (without loss of generality we have assumed that $2^{j_n}/\tilde{C}\in\mathbb{N}$), such that each interval $I_k$ contains a support of a wavelet basis function $\psi_{j_n,\ell}$, $\ell\in\{0,...,2^{j_n}-1\}$ (for Daubechies wavelets with $s$ vanishing moments this is possible for $\tilde{C}\geq 2s+2$). Slightly abusing our notations let us denote a basis function corresponding to the $k$th interval $I_k$ by $\psi_{j_n,k}$ and by $K_{j_n}=\{1,2,...,2^{j_n}/\tilde{C}\}$ the index set of the intervals (and basis functions). Note that the basis functions $\psi_{j_n,k}$, $k\in K_{j_n}$, have disjoint support.

For $\beta \in\{-1,1\}^{|K_{j_n}|}$, let $f_\beta \in L_2[0,1]$ be 
the function with  wavelet coefficients  
 \begin{align}\label{eq: function_minimax}
 f_{\beta, jk}=
 \begin{cases}
L\beta_{k}\delta_n^{1/2}, & \text{if}\quad j=j_n, k\in  K_{j_n},\\
 0, & \text{else},
 \end{cases}
\end{align}
and take $\bar{C}=2^{17}\tilde{C}L^2\|\psi\|_\infty^2$. Now define $\FF_0 = \{f_\beta: \beta \in\{-1,1\}^{|K_{j_n}|}\}$.
 Note that $\FF_0 \subset B_{2,\infty}^s(L)$, since
\begin{align*}
\|f_\beta\|_{B_{{2},\infty}^s}^2=\sup_{j}2^{2sj}\sum_{k=0}^{2^j-1}f_{\beta, jk}^2 \leq L^22^{2s j_n}|K_{j_n}|\delta_n \leq  L^2.
\end{align*}
For this set of functions $\FF_0$, the maximum and minimum number of elements
in balls of radius $t >0$, given by  
\begin{align*}
N_{t}^{\max}= \max_{f_\beta\in\FF_0}\big|\{ f_{\beta'}\in\FF_0:\, 
\|f_\beta-f_{\beta'}\|_2\leq t\} \big|,\\
N_{t}^{\min}= \min_{f_\beta\in\FF_0}\big|\{ f_{\beta'}\in\FF_0:\, 
\|f_\beta-f_{\beta'}\|_2\leq t\} \big|,
\end{align*}
satisfy $N_{t}^{\max}=N_t^{\min}=\sum_{i=0}^{\tilde{t}}{|K_{j_n}| \choose i}<|\FF_0|/2$ for $\tilde{t}=\frac{t^2}{4\delta_n L^2}<|K_{j_n}|/2$ (and therefore $N_{t}^{\max}<|\FF_0|-N_t^{\min}$).

Let $F$ be a uniform random variable over $\{-1,1\}^{|K_{jn}|}$. Note that the design $T$ is independent of $F$, while the data $X$ depends on $F$. In each local machine $i$ we observe the pair of random variables $(T^{(i)},X^{(i)})$ and we transmit a measurable function $Y^{(i)}$ of this local data to the central machine. This provides us the Markov chains $F\rightarrow (T^{(i)},X^{(i)})\rightarrow Y^{(i)}$, $i=1,...,m$ or by jointly writing them in the form
\begin{align}
F\rightarrow (T,X)\rightarrow Y.\label{eq: markov:chain}
\end{align}
Then in view of Theorem \ref{lem: Fano:Wainwright} (with {$t^2=2L^2\delta_n|K_{j_n}|/3$, $d(f,g)=\|f-g\|_2$}) that
\begin{align}
\inf_{\hat{f}\in \mathcal{F}_{dist}(B^{(1)},\ldots, B^{(m)};B_{2,\infty}^s(L))}\sup_{f_0\in B_{2,\infty}^{{s}}(L)}\mathbb{E}_{f_0,T}\|\hat{f}-f_0\|_2^2
\gtrsim L^2\delta_n |K_{j_n}|\Big(1-\frac{I(F;Y)+\log 2}{\log(|\FF_0|/N_{t}^{\max})}\Big),\label{eq: LB:risk2}
\end{align}
where $I(F;Y)$ is the mutual information between the random variables $F$ and $Y$.


To lower bound the right-hand side, first
 note that $N_{t}^{\max}=\sum_{i=0}^{{\tilde{t}}}{|K_{j_n}| \choose i}< 2 {|K_{j_n}| \choose {\tilde{t}}}\leq 2(e|K_{j_n}|/{\tilde{t}})^{\tilde{t}}$ and therefore, for ${\tilde{t}}=|K_{j_n}|/6$ {(i.e. $t^2=2L^2\delta_n|K_{j_n}|/3$)},
\begin{align*}
\log(|\FF_0|/N_t^{\max})\geq |K_{j_n}| \log (2 (6e)^{-1/6}2^{-1/|K_{j_n}|})\geq |K_{j_n}|/6.
\end{align*}
Hence,  
to derive the statement of the theorem from \eqref{eq: LB:risk2} it {is sufficient} to 
show that 
%
%
%
\begin{align}
I(F;Y)\leq |K_{j_n}|/8+O(1)\label{eq: hulp:endproof}.
\end{align}
The proof of the next lemma is deferred to Section \ref{sec: proof: lem:reg}.
\begin{lemma}\label{lem: mutual_regression}
For the Markov chain $F\rightarrow (T,X)\rightarrow Y$ introduced in \eqref{eq: markov:chain} we have for $m=O(n^{\frac{2s}{1+2s}}/\log_2^2 n)$ that
\begin{align}
I({F};{Y})\leq  \frac{4L^2\tilde{C} \|\psi\|_\infty^2\delta_n |K_{j_n}| n}{m}\sum_{i=1}^m\Big( (2^{12}\log(n)|K_{j_n}|^{-1} B^{(i)})\wedge 1\Big)+O(1). \label{eq: help:mutual:info}
\end{align} 

\end{lemma}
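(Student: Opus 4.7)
I would adapt the information-theoretic strategy used by Zhang et al.\ in the parametric distributed setting to the present nonparametric regression problem. The overarching idea is that each local message $Y^{(i)}$ is constrained in two complementary ways: by the statistical (KL) content of its $n/m$ Gaussian samples, and by its $B^{(i)}$-bit expected communication budget. The target bound will arise by fusing these two constraints at the level of individual wavelet coordinates $F_k$, $k\in K_{j_n}$.

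\textbf{Reduction to per-machine terms.} Since $F$ is independent of the design $T$, and given $(F,T)$ the datasets across machines are independent (so are the $Y^{(i)}$), the chain rule yields
\begin{align*}
I(F;Y)\le I(F;Y\mid T)\le \sum_{i=1}^m I(F;Y^{(i)}\mid T).
\end{align*}
For each $i$ I would establish two bounds. The \emph{statistical bound} uses the data-processing inequality $I(F;Y^{(i)}\mid T)\le I(F;X^{(i)}\mid T)$: the Gaussian regression log-likelihood combined with the identity $\|f_\beta-f_{\beta'}\|_2^2=4L^2\delta_n\,\mathrm{Ham}(\beta,\beta')$ (using that the $\psi_{j_n,k}$ are $L_2$-normalised with disjoint supports) gives a per-machine term of order $(n/m)L^2\delta_n|K_{j_n}|\|\psi\|_\infty^2/\sigma^2$. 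The \emph{communication bound} uses Shannon source coding: since $Y^{(i)}$ is a prefix-free binary string with $\EE[l(Y^{(i)})]\le B^{(i)}$, Kraft's inequality yields $H(Y^{(i)}\mid T)\lesssim \log(n)\,B^{(i)}$ (the $\log n$ factor accounts for bit-to-nat conversion and tail control of $l(Y^{(i)})$), whence $I(F;Y^{(i)}\mid T)\le H(Y^{(i)}\mid T)\lesssim\log(n)\,B^{(i)}$.

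\textbf{Coordinate-wise combination and main obstacle.} The sharp factor $(\log(n)|K_{j_n}|^{-1}B^{(i)})\wedge 1$ claimed in \eqref{eq: help:mutual:info} does not follow from a naive per-machine minimum of the two bounds above, so I would carry out the combination one Rademacher coordinate at a time. Since the $F_k$ are independent, the chain rule together with $I(F_k;Y\mid F_{<k},T)\le I(F_k;Y\mid F_{-k},T)$ gives
\begin{align*}
I(F;Y^{(i)}\mid T)\le \sum_{k\in K_{j_n}} I(F_k;Y^{(i)}\mid F_{-k},T),
\end{align*}
and each per-coordinate mutual information admits both a KL bound of order $(n/m)L^2\delta_n\|\psi\|_\infty^2$ and a truncated entropy-type bound of order $\log(n)B^{(i)}/|K_{j_n}|$ obtained by distributing the total output entropy $H(Y^{(i)})$ across the $|K_{j_n}|$ independent coordinates through a concavity/variational argument. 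Taking the term-by-term minimum, summing over $k$ and $i$, and using the hypothesis $m=O(n^{2s/(1+2s)}/\log_2^2n)$ to absorb Kraft-type overheads into the additive $O(1)$, gives the bound stated in the lemma. The main obstacle is exactly this per-coordinate distribution of the communication budget: a direct Kraft bound gives only an \emph{averaged} budget $B^{(i)}/|K_{j_n}|$ per coordinate, and upgrading it to a simultaneously valid per-coordinate bound requires a strong-data-processing-type inequality (or an equivalent coupling argument) for the finite-bit channel $(T^{(i)},X^{(i)})\mapsto Y^{(i)}$, analogous to the single-bit inequalities in Zhang et al.\ but tailored to the random-design Gaussian regression likelihood.
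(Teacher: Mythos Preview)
Your overall architecture --- decompose into per-machine terms $I(F;Y^{(i)})$, bound each both by a direct KL bound and by a communication-driven bound, and take the minimum --- is exactly the one in the paper. But the heart of the argument, the step you flag as the ``main obstacle'', is not actually resolved in your proposal, and the route you sketch (``distributing the total output entropy $H(Y^{(i)})$ across the $|K_{j_n}|$ independent coordinates through a concavity/variational argument'') is not how the paper proceeds and does not obviously work. There is no general reason why $I(F_k;Y^{(i)}\mid F_{-k},T)$ should be controlled by $H(Y^{(i)})/|K_{j_n}|$ simultaneously for every $k$: an adversarial encoder could spend essentially all of its output entropy describing a single coordinate $F_k$.

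What the paper does instead is a \emph{likelihood-ratio truncation} that produces a strong-data-processing coefficient. Conditionally on the design $T^{(i)}=t$, the observations in the $k$th interval satisfy $X^{(i)}_{[(\ell_{k-1}+1):\ell_k]}\mid F_k=\pm 1\sim N(\pm\mu_k(t),I)$ with $\|\mu_k(t)\|_2^2\lesssim \delta^2 n/(md)$ where $d=|K_{j_n}|$ and $\delta^2=L^2\delta_n 2^{j_n}\|\psi\|_\infty^2$. On the high-probability set $B_k(t)=\{|\mu_k(t)^Tx|\le a\}$ with $a^2\asymp \delta^2 n\log(dm)/(dm)$, the likelihood ratio $\phi_{\mu_k(t)}/\phi_{-\mu_k(t)}$ is bounded by $C=e^{2a}$. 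The key inequality (Theorem~\ref{thm: mutualUB:condindependent} in the supplement, an extension of Zhang et al.'s Lemmas~3--4 to the random-design case) then yields
\[
I(F;Y^{(i)}\mid T^{(i)})\;\le\; \text{(small error from }X\notin B_k\text{)}\;+\;2C^2(C-1)^2\,I(X^{(i)};Y^{(i)}\mid T^{(i)}).
\]
Since $(C-1)^2\asymp a^2\asymp \delta^2 n\log(dm)/(dm)$, the $1/d=1/|K_{j_n}|$ factor you are looking for arises from the \emph{contraction coefficient} $(C-1)^2$, not from any redistribution of $H(Y^{(i)})$ over coordinates. Bounding $I(X^{(i)};Y^{(i)}\mid T^{(i)})\le H(Y^{(i)})$ and combining with the direct KL bound $I(F;Y^{(i)})\le 2\delta^2 n/m$ gives the claimed per-machine minimum.

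Two smaller points. First, the paper does \emph{not} assume prefix-free messages and hence cannot use Kraft's inequality; instead it proves the elementary bound $H(Y^{(i)})\le 2\,\EE l(Y^{(i)})+1\le 2B^{(i)}+1$ (Lemma~\ref{lem: Shannon}), so no $\log n$ factor enters through the entropy--length relation. Second, controlling the ``outside $B_k(t)$'' error terms requires a concentration result for the random design (Lemma~\ref{lem: design}) ensuring each interval $I_k$ receives $\asymp n/(md)$ points with high probability; this is where the hypothesis $m=O(n^{2s/(1+2s)}/\log_2^2 n)$ is used.
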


Since in view of the definition of $\delta_n$ we have that
$$\delta_n\leq \frac{2^{12}\bar{C}^{-1}m}{ n \sum_{i=1}^m\big[\big(2^{12}\log_2(n)\delta_n^{\frac{1}{1+2s}} B^{(i)}\big)\wedge 1\big] },$$
the right-hand side of \eqref{eq: help:mutual:info} is further bounded by $2^{-3}|K_{j_n}|+O(1)$, finishing the proof of assertion \eqref{eq: hulp:endproof} and concluding the proof of the theorem.

\subsection{Proof of Theorem \ref{theorem: minimaxL2UB1}}\label{sec: minimaxL2UB}
First note that by using Cauchy-Schwartz inequality we get that
\begin{align*}
\mathbb{E}_{f_0,T}(\log_2 |\hat{f}_{jk}^{(i)}|\vee1)&\leq 1+\mathbb{E}_{f_0,T} |\hat{f}_{jk}^{(i)}| 
= 1+\mathbb{E}_{f_0,T} |X_{1}^{(i)}\psi_{jk}(T_1^{(i)})|\\
& \leq 1+\|f_0\|_2\|\psi_{jk}\|_2+ \|\psi_{jk}\|_2\mathbb{E}_{f_0}|\eps_1^{(i)}| =O(1).
\end{align*}
Hence  in view of Lemma \ref{lem: approx2} (with $D=1/2$) the approximation satisfies 
\begin{align*}
0\leq |\hat{f}_{jk}^{(i)}-Y_{jk}^{(i)}|\leq 1/\sqrt{n}\quad \text{and}\quad \mathbb{E}_{f_0,T}[l({Y_{jk}^{(i)}})]\leq(1/2+o(1))\log_2 n.
\end{align*}
 Therefore  we need at most $(1/2+o(1))B$ bits in expected value to transmit
  $\{Y_{jk}^{(i)}:\, 2^j+k\leq n^{1/(1+2s)}\wedge \lfloor B/\log_2 n\rfloor\}$, hence $\hat{f}_n\in\mathcal{F}_{dist}(B,...,B;B_{2,\infty}^s(L))$. 

Next for convenience we introduce the notation for the approximation error $W_{jk}^{(i)}=Y_{jk}^{(i)}- \hat{f}_{jk}^{(i)}$, satisfying $|W_{jk}^{(i)}|\leq n^{-1/2}$. The estimator $\hat{f}$ is given by its wavelet coefficients $\hat{f}_{jk}$, $j\in\mathbb{N}, k\in \{0,1,...,2^j-1\}$. For $2^j+k>n^{1/(1+2s)}\wedge \lfloor B/\log_2 n\rfloor$ we have $ \hat{f}_{jk}=0$, 
while for $2^j+k\leq n^{1/(1+2s)}\wedge \lfloor B/\log_2 n\rfloor$,
\begin{align*}
\hat{f}_{jk}=\frac{1}{m}\sum_{i=1}^m Y_{jk}^{(i)}=\frac{1}{m}\sum_{i=1}^m (\hat{f}_{jk}^{(i)}+W_{jk}^{(i)})
 = f_{0,jk}+Z_{jk}+W_{jk},
\end{align*}
where $Z_{jk}=m^{-1}\sum_{i=1}^{m} (\hat{f}_{jk}^{(i)}- \mathbb{E}_{f_0,T}\hat{f}_{jk}^{(i)})$ and $|W_{jk}|=|m^{-1}\sum_{i=1}^m W_{jk}^{(i)}|\leq n^{-1/2}$. Note that in view of assumption $\|f_0\|_{\infty}\leq M$
\begin{align*}
\mathbb{E}_{f_0,T} Z_{jk}^2& \leq 2\mathbb{E}_{f_0,T} \Big( \frac{1}{n}\sum_{i=1}^m\sum_{\ell=1}^{n/m}f_0(T_\ell^{(i)})\psi_{jk}(T^{(i)}_\ell)- \mathbb{E}_{f_0,T} f_0(T_\ell^{(i)})\psi_{jk}(T^{(i)}_\ell) \Big)^2\\
&\qquad+ 2\mathbb{E}_{f_0,T}\Big( \frac{1}{n}\sum_{i=1}^m\sum_{\ell=1}^{n/m} \eps_{\ell}^{(i)}\psi_{jk}(T_\ell^{(i)})\Big)^2\\
&\leq 2n^{-1}\mathbb{E}_{T}\Big( f_0(T_1^{(1)})\psi_{jk}(T^{(1)}_1)- \mathbb{E}_{T} f_0(T_1^{(1)})\psi_{jk}(T^{(1)}_1) \Big)^2\\
&\qquad +2n^{-1} \mathbb{E}_{f_0}(\eps_{1}^{(1)})^2  \mathbb{E}_T \psi_{jk}^2(T_1^{(1)})\\\
&\leq 2n^{-1} \int_{0}^1 f_0^2(t)\psi_{jk}^2(t)dt+2n^{-1}\leq 2(M^2+1)/n.
\end{align*}

For convenience we also introduce the notation $j_n=\big\lfloor \log_2\big(L^\frac{2}{1+2s}(n^{\frac{1}{1+2s}}\wedge \lfloor B/\log_2 n\rfloor)\big)\big\rfloor$. Then by combining the above  inequalities we get that the risk is bounded from above by
\begin{align}
\mathbb{E}_{f_0,T}\|\hat{f}-f_0\|_2^2&\leq \sum_{j\geq j_n}\sum_{k=0}^{2^j-1}f_{0,jk}^2+2\sum_{j=0}^{j_n}\sum_{k=0}^{2^j-1}\mathbb{E}_{f_0,T}(Z_{jk}^2+W_{jk}^2)\nonumber\\
&\lesssim  L^2\sum_{j\geq j_n}2^{-2js}\sup_{j\geq j_n} 2^{2js}\sum_{k=0}^{2^j-1}f_{0,jk}^2+
\sum_{j=0}^{j_n}\sum_{k=0}^{2^j-1} n^{-1}\nonumber \\
&\lesssim L^22^{-2j_ns} +2^{j_n}/n\lesssim L^{\frac{2}{1+2s}} (n^{-2s/(1+2s)}\vee (B/\log_2 n)^{-2s}).\label{eq: risk:large:B}
\end{align}

\subsection{Proof of Theorem \ref{theorem: minimaxL2UB2}}\label{sec: minimaxL2UB2}
Similarly to the proof of Theorem \ref{theorem: minimaxL2UB1} we get that $\mathbb{E}_{f_0,T}[l({Y_{jk}^{(i)}})]\leq (1/2+o(1))\log_2 n$ and since each machine transmits at most $\lfloor B/\log_2 n\rfloor$ coefficients, the total amount of transmitted bits per machine is bounded from above by $B$ (for large enough $n$), hence $\hat{f}\in\mathcal{F}_{dist}(B,\ldots,B; B_{2,\infty}^s(L))$. 

Next let $A_{jk}=\{ \lfloor \mu_{jk} m/\eta\rfloor +1,...,\lfloor (\mu_{jk}+1) m/\eta\rfloor \}$ be the collection of machines transmitting the $(j,k)$th approximated wavelet coefficient $Y_{jk}^{(i)}$ {and note that the size of the set satisfies $|A_{jk}|\asymp m/\eta$.} Then our aggregated estimator $\hat{f}$ satisfies for $2^j+k\leq \eta  \lfloor B/\log_2 n \rfloor$ (i.e. the total number of different coefficients transmitted) that 
\begin{align*}
\hat{f}_{jk}&=\frac{1}{|A_{jk}|}\sum_{i\in A_{jk}}Y_{jk}^{(i)}= f_{0,jk}+Z_{jk}+W_{jk},
\end{align*}
where  $|W_{jk}|=\frac{1}{|A_{jk}|} |\sum_{i\in A_{jk}}W_{jk}^{(i)}|\leq n^{-1/2}$ and $Z_{jk}=\frac{1}{|A_{jk}|}\sum_{i\in A_{jk}} (\hat{f}_{jk}^{(i)}- \mathbb{E}_{f_0,T}\hat{f}_{jk}^{(i)})$. Note that similarly to above
\begin{align}
\mathbb{E}_{f_0,T} Z_{jk}^2& \leq 2\mathbb{E}_{f_0,T} \Big( \frac{m}{n|A_{jk}|}\sum_{i\in A_{jk}}\sum_{\ell=1}^{n/m}f_0(T_\ell^{(i)})\psi_{jk}(T^{(i)}_\ell)- \mathbb{E}_{f_0,T} f_0(T_\ell^{(i)})\psi_{jk}(T^{(i)}_\ell) \Big)^2\nonumber\\
&\qquad+ 2\mathbb{E}_{f_0,T}\Big( \frac{m}{n|A_{jk}|}\sum_{i\in A_{jk}}\sum_{\ell=1}^{n/m} \eps_{\ell}^{(i)}\psi_{jk}(T_\ell^{(i)})\Big)^2\nonumber\\
&\leq  \frac{2m}{n|A_{jk}|}\mathbb{E}_{T}\Big( f_0(T_1^{(1)})\psi_{jk}(T^{(1)}_1)- \mathbb{E}_{T} f_0(T_1^{(1)})\psi_{jk}(T^{(1)}_1) \Big)^2\nonumber\\
&\qquad + \frac{2m}{n|A_{jk}|} \mathbb{E}_{f_0}(\eps_{1}^{(1)})^2  \mathbb{E}_T \psi_{jk}^2(T_1^{(1)})\leq  \frac{2(M^2+1)m}{n|A_{jk}|}.\label{eq: UB:square:noise}
\end{align}

Let  $j_n=\lfloor\log_2 (\eta \lfloor B/\log_2 n\rfloor )\rfloor$. Then similarly to \eqref{eq: risk:large:B} the risk of the aggregated estimator  is bounded as
\begin{align}
\mathbb{E}_{f_0,T}\|\hat{f}-f_0\|_2^2&\leq \sum_{j\geq j_n}\sum_{k=0}^{2^{j}-1}f_{0,jk}^2+2\sum_{j=0}^{j_n}\sum_{k=0}^{2^j-1}\mathbb{E}_{f_0,T}(Z_{jk}^2+W_{jk}^2)\nonumber\\
&\lesssim  \sum_{j\geq j_n}2^{-2js}\sup_{j\geq j_n} 2^{2js}\sum_{k=0}^{2^j-1}f_{0,jk}^2+
\sum_{j=0}^{j_n}\sum_{k=0}^{2^j-1} \eta/n \nonumber \\
&\lesssim  L^2\big(\frac{B\eta}{\log _2n}\big)^{-2s}+\frac{B\eta^{{2}}}{n\log_2 n}\asymp L^{\frac{4}{2+2s}}(nB/\log_2 n)^{-\frac{2s}{2+2s}}\nonumber\\
&= L^{\frac{4}{2+2s}}(\log_2 n)^{\frac{4s}{2+2s}}\Big(\frac{n^{1/(1+2s)}}{B\log_2 n}\Big)^{\frac{2s}{2+2s}}n^{-\frac{2s}{1+2s}}\vee L^2\big(\frac{Bm}{\log_2n}\big)^{-2s},\label{UB: alg2}
\end{align}
 concluding the proof of the theorem.

Finally we show that Algorithm \ref{alg: transmit:number} is in general 
suboptimal in this case. Consider the function $f_0\in B_{2,\infty}^s(1)$ with wavelet coefficients $f_{0,jk}=2^{-j(s+1/2)}$, $j\in\mathbb{N}$, $k=0,...,2^{j}-1$, and take $j_n=\lfloor\log_2 \lfloor B/\log_2 n\rfloor \rfloor$, then
\begin{align*}
\mathbb{E}_{f_0,T}\|\hat{f}-f_0\|_2^2&\geq \sum_{j\geq j_n}\sum_{k=0}^{2^j-1}f_{0,jk}^2\geq \sum_{k=0}^{2^j-1}2^{-j_n (2s+1)}\\
& \gtrsim \Big(\frac{B}{\log_2 n}\Big)^{-2s}=\tilde{M}_n \Big(\frac{n^{1/(1+2s)}}{B\log_2 n}\Big)^{\frac{2s}{2+2s}}n^{-\frac{2s}{1+2s}}
\end{align*}
where the multiplication factor $\tilde{M_n}=\Big(\frac{n(\log_2 n)^{3+2s}}{B^{1+2s}}\Big)^{\frac{2s}{2+2s}}$ tends to infinity and can be of polynomial order, yielding a highly sub-optimal rate.
%

\subsection{Proof of Lemma \ref{lem: approx2}}\label{sec: approx}
 One can easily see by construction that 
\begin{align}
0\leq |X-Y|\leq n^{-D}.\label{eq: error}
 \end{align}
Next note that the expected number of transmitted bits is bounded from above by
\begin{align*}
\EE\big(1+(1\vee \log_2 |X|)+D\log_2 n\big)&= 1+D\log_2 (n)+\EE(1\vee \log_2 |X|)\\
&=(D+o(1))\log_2 n.
\end{align*}


\subsection{Proof of Theorem \ref{theorem: adaptive}}\label{sec: adaptive}
First recall that for every $s,L>0$ and $f_0\in B_{2,\infty}^s(L)$ we have $f_{0,jk}^2\leq L^2$, $j\geq 0, k\in \{0,1,...,2^j-1\}$. Therefore, in view of Lemma \ref{lem: approx2} (with $D=1/2$) we have $\mathbb{E}_{f_0,T}[l({Y_{jk}^{(i)}})]\leq (1/2+o(1))\log_2 n$. Since the machines in group $I$ and the machines in $I_{t,\ell}$, $t\in\{0,...,\tilde{\eta}-1\}$, $\ell\in\{1,...,2^{t}\}$ transmit at most $ \lfloor B/\log_2 n\rfloor$ coefficients we have that in expected value at most
\begin{align*}
\lfloor B/\log_2 n\rfloor \big(1/2+o(1)\big)\log_2 n\leq  B
\end{align*}
bits are transmitted  per machine (for $n$ large enough). Therefore the estimator indeed belongs to 
$\mathcal{F}_{dist}(B,\ldots,B; B_{2,\infty}^{s}(L))$.

Next we show that the estimator $\hat{f}$ achieves the minimax rate. First let us introduce the notation $|W_{jk}^{(i)}|=|Y_{jk}^{(i)}-\hat{f}_{jk}^{(i)}|\leq n^{-1/2}$. Then note that for $j\leq j_{\max}$ and $k\in \{0,1,...,2^j-1\}$ {the aggregated quantities $\hat{f}_{jk}$ defined in \eqref{def: y_jk} are equal to} 
\begin{align}
\hat{f}_{jk}=\frac{1}{|A_{jk}|}\sum_{i\in A_{jk}} Y_{jk}^{(i)}=  f_{0,jk}+Z_{jk}+W_{jk},\label{eq: model:mGWN}
\end{align}
where
\begin{equation*}
A_{jk}=\begin{cases}
I, & \text{if $j<j_{B,n}$, $k=0,1,...,2^{j}-1$},\\
I_{j-j_{B,n} ,\ell}, & \text{if $j\geq j_{B,n}$, $(\ell-1)2^{j_{B,n}} \leq k<\ell2^{j_{B,n}}$},
\end{cases}
\end{equation*}
where $|W_{jk}|=n_j^{-1}|\sum_{i\in A_{jk}}W_{jk}^{(i)}|\leq n^{-1/2}$, $Z_{jk}=|A_{jk}|^{-1}\sum_{i\in A_{jk}}(\hat{f}_{jk}^{(i)}-\mathbb{E}_{f_0,T}\hat{f}_{jk}^{(i)})$, and recall that $n_j=n|A_{jk}|/m$ for every $j\leq j_{\max}$, $k\in \{0,..,2^{j}-1\}$. Recall also that $n_j\asymp nB/(2^j(\log_2 n)^2)$ for $j\geq j_{B,n}$ and $n_j\asymp n$ for  $j<j_{B,n}$.  

Note that the squared bias satisfies
\begin{align*}
\|\mathbb{E}_{f_0,T}\tilde{f}(j)-f_0\|_2^2\lesssim \|K(f_0,j)-f_0\|_2^2+ 2^j/n
\lesssim 2^{-2js}\|f_0\|_{B_{2,\infty}^s}^2+2^j/n,
\end{align*}
where $\quad K(f_0,j)=\sum_{l=0}^{j-1}\sum_{k=0}^{2^l-1} f_{0,lk}\psi_{lk}$.  Furthermore, also note that {for} $\ell\leq j$ we have $n_\ell\geq n_j$ and hence in view of \eqref{eq: UB:square:noise}
\begin{align*}
\mathbb{E}_{f_0,T}\| \tilde{f}(j)-\mathbb{E}_{f_0,T}\tilde{f}(j)\|_2^2&\lesssim
\sum_{\ell\leq j-1}\sum_{k=0}^{2^{\ell}-1} \big(  \mathbb{E}_{f_0,T}Z_{\ell k}^2+  \mathbb{E}_{f_0,T}W_{\ell k}^2\big) \\
&\lesssim \sum_{\ell\leq j-1}\sum_{k=0}^{2^{\ell}-1} n_\ell^{-1}\leq 2^j/n_j.
\end{align*}

Let us introduce the notation $B(j,f_0)=2^{-2js} \|f_0\|_{B_{2,\infty}^s}^2$ and define the optimal choice of the parameter $j$ (the optimal resolution level) as
\begin{align*}
j^*=\min\big\{ j\in\mathcal{J}:\,  B(j,f_0)\leq  2^j/n_j \big\},
\end{align*}
balancing out the squared bias and variance terms. Note that since the right hand side is monotone increasing and the left hand side is monotone decreasing in $j$,  we have that
\begin{align*}
B(j,f_0)\leq 2^j/n_j,\,\,\text{for $j\geq j^*$}\quad\text{and}\quad B(j,f_0)> 2^j/n_j,\,\,\text{for $j< j^*$}.
\end{align*}
Therefore
\begin{align*}
2^{j^*-1}/n_{j^*-1}< B(j^*-1,f_0)= 2^{2s} B(j^*,f_0)\leq  2^{2s}  2^{j^*}/n_{j^*}.
\end{align*}

Let us distinguish three cases according to the value of $j^*$. If $j^*< j_{B,n}$ then $n_{j^*-1}=n_{j^*}\asymp n$ and therefore $2^{j^*}\asymp n^{1/(1+2s)}$ (using the definition $B(j^{*},f_0)=2^{-2j^{*}s} \|f_0\|_{B_{2,\infty}^s}^2$). Note that the inequality $j^*< j_{B,n}$ is implied by $B(j_{B,n}-1,f_0)\leq 2^{j_{B,n}-1}/n_{j_{B,n}-1}$, which in turns holds if $2^{j_{B,n}-1}\geq (n\|f_0\|_{B_{2,\infty}^s}^2)^{1/(1+2s)}$. Therefore we can conclude that $B\geq 4 L^{2/(1+2s)}n^{1/(1+2s)}\log_2 n$ implies the inequality $j^*< j_{B,n}$ (by recalling that $2^{j_{B,n}}\geq B/(2\log_2 n)$). If $j^*= j_{B,n}$, then $2^{j^*}\asymp B/\log_2 n$, {$n_{j^*}\asymp n/\log_2 n$, $n_{j^*-1}\asymp n$} and therefore $(n/\log_2 n)^{1/(1+2s)}\lesssim 2^{j^*}\lesssim n^{1/(1+2s)}$. Finally, if $j^*> j_{B,n}$, then $n_{j^*-1}\asymp n_{j^*}\asymp nB/(2^{j^*}\log_2^2 n)$ and therefore $2^{j^*}\asymp (nB/\log_2^2 n)^{1/(2+2s)}$. We summarize these findings in the following displays
\begin{equation}\label{eq: opt:resolution}
2^{j^*}\asymp\begin{cases}
n^{1/(1+2s)}, &\text{if $B\geq  C_L n^{1/(1+2s)}\log_2 n$,}\\
B/\log_2 n, &\text{if $ n^{\frac{1}{1+2s}}(\log_2 n)^{\frac{2s}{1+2s}}\leq  B< C_L n^{\frac{1}{1+2s}}\log_2 n$,}\\
(nB/\log_2^2 n)^{1/(2+2s)}&\text{if  $B< n^{1/(1+2s)}(\log_2 n)^{\frac{2s}{1+2s}}$,}
\end{cases}
\end{equation}
and
\begin{equation}\label{eq: opt:resolution_noise}
n_{j^*}\gtrsim\begin{cases}
n, &\text{if $B\geq  C_L n^{1/(1+2s)}\log_2 n$,}\\
n/\log_2 n, &\text{if $ n^{\frac{1}{1+2s}}(\log_2 n)^{\frac{2s}{1+2s}}\leq  B< C_L n^{\frac{1}{1+2s}}\log_2 n$,}\\
(nB/\log_2^2 n)^{\frac{1+2s}{2+2s}}&\text{if  $B\leq n^{1/(1+2s)}(\log_2 n)^{\frac{2s}{1+2s}}$,}
\end{cases}
\end{equation}
where $C_L= 4 L^{2/(1+2s)}$. Note that in all cases $j^*\leq j_{\max}$ holds.

Let us split the risk into two parts
\begin{align}
\mathbb{E}_{f_0,T}\|f_0-\hat{f}\|_2= \mathbb{E}_{f_0,T}\|f_0-\tilde{f}(\hat{j})\|_2 1_{\hat{j}>j^*}+ \mathbb{E}_{f_0,T}\|f_0-\tilde{f}(\hat{j})\|_2 1_{\hat{j}\leq j^*},\label{eq: risk:two:parts}
\end{align}
and deal with each term on the right-hand side separately. First note that
\begin{align*}
\mathbb{E}_{f_0,T}\|f_0-\tilde{f}(\hat{j})\|_2^2 1_{\hat{j}\leq j^*}&\leq  2\mathbb{E}_{f_0,T}\|\tilde{f}(j^*)-\tilde{f}(\hat{j})\|_2^2 1_{\hat{j}\leq j^*}+2 \mathbb{E}_{f_0,T}\|\tilde{f}(j^*)-f_0\|_2^2\\
&\lesssim \tau 2^{j^*}/n_{j^*}+ \|\mathbb{E}_{f_0,T}\tilde{f}(j^*)-f_0\|_2^2+\mathbb{E}_{f_0,T}\|\tilde{f}(j^*)-\mathbb{E}_{f_0,T} \tilde{f}(j^*)\|_2^2\\
&\lesssim  2^{j^*}/n_{j^*}+2^{-2j^*s},
\end{align*}
which implies together with \eqref{eq: opt:resolution} and \eqref{eq: opt:resolution_noise} that
\begin{equation}\label{eq: risk:UB:cases}
\mathbb{E}_{f_0,T}\|f_0-\hat{f}\|_2^2 1_{\hat{j}\leq j^*}\lesssim
\begin{cases}
n^{-2s/(1+2s)},&\text{if  $B\geq  C_L n^{1/(1+2s)}\log_2 n$,}\\
B/n, &\text{if $ n^{\frac{1}{1+2s}}(\log_2 n)^{\frac{2s}{1+2s}}\leq  B\leq C_L  n^{\frac{1}{1+2s}}\log_2 n$,}\\
\Big(nB/\log_2^2 n\Big)^{-\frac{2s}{2+2s}},&\text{if  $B\leq n^{1/(1+2s)}(\log_2 n)^{\frac{2s}{1+2s}}$.}
\end{cases}
\end{equation}

Next we deal with the first term on the right hand side of \eqref{eq: risk:two:parts}. By Cauchy-Schwarz inequality and Lemma \ref{lem: Lem:8.2.1redo} we get that
\begin{align*}
\mathbb{E}_{f_0,T}\|f_0-\hat{f}\|_2 1_{\hat{j}> j^*}&\leq \sum_{j=j^*+1}^{j_{\max}} \mathbb{E}_{f_0,T}^{1/2}\|f_0-\tilde{f}(j)\|_2^2\, \mathbb{P}_{f_0,T}^{1/2}(\hat{j}=j)\\
& \lesssim \sum_{j=j^*+1}^{j_{\max}}  \mathbb{P}_{f_0,T}^{1/2}(\hat{j}=j)\lesssim j_{\max}e^{-(cn^{\delta}\wedge \sqrt{n_r})}+\sum_{k=1}^{\infty} e^{-{(c/2) 2^{j^*}k}}\\
&=o(n^{-1})+o( 2^{- j^*s}),
\end{align*}
resulting in the required upper bound in view of \eqref{eq: risk:UB:cases}, concluding the proof of our statement.

\begin{lemma}\label{lem: Lem:8.2.1redo}
Assume that $f_0\in B_{2,\infty}^s(L)$, for some $s,L>0$. Then there exists a universal constants $c,\delta>0$ such that for every $j>j^*$ we have
\begin{align*}
\mathbb{P}_{f_0,T}(\hat{j}=j)\lesssim e^{-(c 2^{j}\wedge n^{\delta}\wedge \sqrt{n_r})}.
\end{align*}
\end{lemma}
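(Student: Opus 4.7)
\begin{prf}
The plan is to carry out a standard Lepski rejection-probability argument, adapted to the distributed model \eqref{eq: model:mGWN}, where the main new difficulty is to produce a suitable concentration inequality for sums of the aggregated noises $Z_{\ell k}$.

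By the definition of $\hat{j}$, the event $\{\hat{j}=j\}$ with $j>j^*$ forces the resolution level $j-1 \geq j^*$ to have been rejected. Hence there exists $l\in\mathcal{J}$ with $l\geq j$ such that
\[
\|\tilde{f}(j-1)-\tilde{f}(l)\|_2^2 > \tau \, 2^l/n_l.
\]
I will union-bound over $l\in\{j,\dots,j_{\max}\}$ and, for each $l$, decompose
\[
\|\tilde{f}(j-1)-\tilde{f}(l)\|_2^2 \le 2\,\|\mathbb{E}_{f_0,T}(\tilde{f}(j-1)-\tilde{f}(l))\|_2^2 + 2\,\|(\tilde{f}(j-1)-\tilde{f}(l))-\mathbb{E}_{f_0,T}(\tilde{f}(j-1)-\tilde{f}(l))\|_2^2.
\]
The first term is a pure bias term: using that $j-1 \geq j^*$ and the Besov embedding implicit in $B(j-1,f_0)\le 2^{j-1}/n_{j-1}$, together with the monotonicity of $l\mapsto 2^l/n_l$ in each of the two regimes ($l<j_{B,n}$ and $l\geq j_{B,n}$), this deterministic part is bounded by a universal multiple of $2^l/n_l$. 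For $\tau$ large enough it is absorbed into $\tau\,2^l/(2n_l)$, and it only remains to bound the probability that the stochastic centered part exceeds $\tau\, 2^l/(2 n_l)$.

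By orthonormality of the wavelet basis, the stochastic part equals
\[
\sum_{\ell=j-1}^{l-1}\sum_{k=0}^{2^\ell -1} (Z_{\ell k} + W_{\ell k})^2,
\]
with $|W_{\ell k}|\le n^{-1/2}$, contributing at most $2^l/n$ (negligible compared to $2^l/n_l$), and $Z_{\ell k}$ the aggregated centered local estimators. Because within each subgroup $A_{\ell k}$ the local centered coefficients are i.i.d.\ with variance $\lesssim 1/n_\ell$, and because across the different subgroups that span the block $j-1\le\ell\le l-1$ the variables are independent by the disjointness of the subgroups of machines, the expected total stochastic contribution is $\lesssim \sum_{\ell} 2^\ell/n_\ell \asymp 2^l/n_l$. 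To turn this in-expectation bound into a tail bound, I would split each $Z_{\ell k}$ into its Gaussian-noise component (conditionally Gaussian given $T$) and its random-design component, apply a Hanson--Wright / sub-exponential Bernstein inequality to the sum of squares, and then take expectation over $T$.

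The hardest step will be the concentration inequality for $\sum (Z_{\ell k}+W_{\ell k})^2$: the random-design part is not sub-Gaussian because $\|\psi_{\ell k}\|_\infty$ grows with $\ell$, so a Bernstein-type bound produces two regimes, a sub-Gaussian one yielding $\exp(-c\,2^l)$ (from $2^l$ roughly independent squared terms) and a Bernstein/truncation tail yielding $\exp(-n^\delta)$ and $\exp(-\sqrt{n_r})$ for a rate $r$ that reflects the smallest effective sample size $n_\ell$ in the block and the truncation level needed to control the empirical second moments of $\psi_{\ell k}(T)^2$. Minimising over the three bounds gives the exponent $c\, 2^j \wedge n^\delta \wedge \sqrt{n_r}$ asserted in the lemma. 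The remaining union bound over $l$ changes only the multiplicative constants since the exponent is at least $c\,2^j$.
\end{prf}
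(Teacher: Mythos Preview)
Your overall architecture matches the paper's: union-bound over $l\ge j$, expand $\|\tilde f(j-1)-\tilde f(l)\|_2^2$ by Parseval into $\sum_{r=j-1}^{l-1}\sum_k(f_{0,rk}+Z_{rk}+W_{rk})^2$, control the bias $\sum f_{0,rk}^2$ via $j-1\ge j^*$ and the monotonicity of $2^r/n_r$, dispose of $W$ trivially, and then establish a tail bound for the centered part. Two points deserve correction or sharpening.

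First, your independence claim is wrong. The subgroups of machines are \emph{not} disjoint across resolution levels: all levels $r<j_{B,n}$ use the same group $I$, so $\{Z_{rk}\}$ for different $r$ share the same data. The paper sidesteps this by analysing $\sum_{k=0}^{2^r-1}Z_{rk}^2$ one level $r$ at a time and then summing (the bounds $2^r/n_r$ are geometric in $r$, so the sum is $\lesssim 2^l/n_l$); no cross-level independence is invoked.

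Second, your ``Hanson--Wright/Bernstein'' plan for $\sum_k Z_{rk}^2$ is too vague to yield the stated exponents, especially the $e^{-\sqrt{n_r}}$ term. The paper splits $Z_{rk}$ into the design part $n_r^{-1}\sum(f_0(T)\psi_{rk}(T)-\mathbb{E})$ and the Gaussian-noise part $n_r^{-1}\sum\eps\,\psi_{rk}(T)$, and uses distinct tools for each. For the design part, $\sum_k(\cdot)^2$ is recast as $\sup_{g\in S(r)}\nu(g)^2$ over the unit sphere $S(r)$ (Lemma~5.3 of Chagny), then a Talagrand-type bound (Lemma~5 of Lacour) gives $\mathbb{E}[\sup\nu(g)^2-c_12^r/n_r]_+\lesssim n_r^{-1}e^{-c_32^r}+2^r n_r^{-2}e^{-\sqrt{n_r}}$; the second Talagrand term is precisely where $e^{-\sqrt{n_r}}$ originates, and a direct Bernstein on the quadratic form would not produce it without substantial extra work. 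For the Gaussian-noise part, conditionally on $T$ the vector $(\tilde Z_{rk})_k$ is Gaussian with a banded covariance (since $\psi_{rk},\psi_{rk'}$ have disjoint support once $|k-k'|$ is large); on the design-good event $\mathcal{B}_r$ (bin counts controlled via Lemma~4.2) Gershgorin bounds all eigenvalues by $C/n_r$, and a chi-square tail yields $e^{-C2^r}$. The probability of $\mathcal{B}_r^c$ is the source of the $e^{-n^\delta}$ term. So your intuition about the three regimes is right, but the actual derivation requires these specific tools rather than a generic quadratic-form inequality.
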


\begin{proof}
Let us introduce the notation $j^{-}=j-1$ and note that for every $j>j^*$ we have $j^{-}\geq j^*$. Then by the definition of $\hat{j}$
\begin{align*}
\mathbb{P}_{f_0,T}(\hat{j}=j)\leq \sum_{l=j}^{j_{\max}}\mathbb{P}_{f_0,T}(\|\tilde{f}(j^-)- \tilde{f}(l)\|_2^2> \tau 2^l/n_l ).
\end{align*}
Note that the left hand side term in the probability in view of Parseval's inequality can be given in the form
\begin{align*}
\|\tilde{f}(j^-)- \tilde{f}(l)\|_2^2
&=\sum_{r =j^-}^{l-1}\sum_{k=0}^{2^{r}-1} \Big( f_{0,r k}+Z_{r k}+W_{r k} \Big)^2\\
&\leq 3\sum_{r =j^-}^{l-1}\sum_{k=0}^{2^{r}-1}\Big(
 f_{0,rk}^2+ Z_{r k}^2+ W_{r k}^2\Big).
\end{align*}
We deal with the three terms on the right hand side separately. Note that the functions $j\mapsto B(j,f_0)$ and $j\mapsto n_j$ are monotone decreasing, hence by the definition of $j^*$ we get for $l\geq j^-\geq j^*$
\begin{align*}
\sum_{r=j^-}^{l-1}\sum_{k=0}^{2^{r}-1}f_{0,r k}^2\leq B({j^{-}},f_0) \leq B(j^*,f_0)\leq 2^{j^*}/n_{j^*}\leq 2^l/n_l.
\end{align*}
Furthermore $\sum_{r =j^-}^{l-1}\sum_{k=0}^{2^{r}-1}W_{r k}^2\leq 2^l/n\leq 2^l/n_l$. 

Let $S(r)=\{\sum_{l=0}^{r}\sum_{k=0}^{2^l-1}b_{lk}\psi_{lk}:\, \sum_{l=0}^{r}\sum_{k=0}^{2^l-1}b_{lk}^2=1 \}$ denote the unite sphere in the linear subspace spanned by the basis functions $\psi_{lk}$, $l\leq r$, $0\leq k\leq 2^{l}-1$. Then in view of Lemma 5.3 of \cite{chagny:2013}, see also Lemma \ref{lem:5.3} in the supplement, and the inequality $(a+b)^2\leq 2a^2+2b^2$ we get that
\begin{align}
\sum_{k=0}^{2^{r}-1}Z_{r k}^2&= \sum_{k=0}^{2^{r}-1}\Big(\frac{1}{n_r}\sum_{i\in A_{r k}}\sum_{\ell=1}^{n/m}\big(Y_{\ell}^{(i)}\psi_{jk}(T^{(i)}_\ell)-\mathbb{E}_{f_0,T}Y_{\ell}^{(i)}\psi_{jk}(T^{(i)}_\ell)\big)\Big)^2\nonumber\\
&\leq 2\sup_{g\in S(r)}\Big(\frac{1}{n_r}\sum_{i\in A_{r k}}\sum_{\ell=1}^{n/m} \big(f_0(T_{\ell}^{(i)})g(T^{(i)}_\ell)-\mathbb{E}_{T}f_0(T_{\ell}^{(i)})g(T^{(i)}_\ell)\big)\Big)^2
\nonumber\\
&\qquad+2\sum_{k=0}^{2^{r}-1}\Big(\frac{1}{n_r}\sum_{i\in A_{r k}}\sum_{\ell=1}^{n/m}\big(\eps_{\ell}^{(i)}\psi_{jk}(T^{(i)}_\ell)\big)\Big)^2.\label{eq: UB:chi:square:term}
\end{align}

We deal with the two terms on the right hand side separately, starting with the first one. Note that for every $g\in S(r)$ the inequality $\|g\|_{\infty}\leq C 2^{r/2}$ holds, for some universal constant $C>0$ and 
\begin{align*}
 {\sup_{g\in S(r)}}  V_T\big( f_0(T_1^{(1)})g(T_1^{(1)})\big)\leq \|f_0\|_\infty^2.
\end{align*}
Next for convenience let us introduce the notation 
$$\nu(g)=\frac{1}{n_r}\sum_{i\in A_{r k}}\sum_{\ell=1}^{n/m}\Big(f_0(T_{\ell}^{(i)})g(T^{(i)}_\ell)-\mathbb{E}_{T}f_0(T_{\ell}^{(i)})g(T^{(i)}_\ell)\Big).$$
 Then by the definition of $S(r)$ and Cauchy-Schwarz inequality
\begin{align*}
\mathbb{E}_T {\sup_{g\in S(r)}} \big|\nu(g)\big|\leq  \sum_{k=0}^{2^{r}-1} \mathbb{E}_T \big(\nu(\psi_{r,k})^2\big)
=\sum_{k=0}^{2^{r}-1}\frac{1}{n_r}V_T\big( f_0(T_{1}^{(1)})\psi_{rk}(T^{(1)}_1)\big)\leq \frac{\|f_0\|_\infty^2 2^r}{n_r}.
\end{align*}
Therefore in view of Lemma 5 of \cite{lacour:08}, see also Lemma \ref{lem4:chagny} in the supplement, there exist constants $c_1,c_2,c_2>0$ such that
\begin{align}
\mathbb{E}_{T}& \sup_{g\in S(r)}\Big[\Big(\frac{1}{n_r}\sum_{i\in A_{r k}}\sum_{\ell=1}^{n/m} \big(f_0(T_{\ell}^{(i)})g(T^{(i)}_\ell)-\mathbb{E}_{T}f_0(T_{\ell}^{(i)})g(T^{(i)}_\ell)\big)\Big)^2- c_1 2^r/n_r \Big]_+\nonumber\\
&\quad\leq c_2\frac{1}{n_r}e^{-c_3 2^{r}}+c_4\frac{2^{r}}{n_r^2}e^{-\sqrt{n_r}}\lesssim \frac{1}{n_r}e^{-(c_3 2^{r}\wedge \sqrt{n_r})}.\label{eq: bound:mean}
\end{align}
Therefore by Markov's inequality we get that
\begin{align}
\mathbb{P}_{T}\Big(\sup_{g\in S(r)}\Big(\frac{1}{n_r}\sum_{i\in A_{r k}}\sum_{\ell=1}^{n/m} \big(f_0(T_{\ell}^{(i)})g(T^{(i)}_\ell)-\mathbb{E}_{T}f_0(T_{\ell}^{(i)})g(T^{(i)}_\ell)\big)\Big)^2  \geq \frac{2c_1 2^r}{n_r} \Big)\lesssim 2^{-r}e^{-(c_3 2^{r}\wedge \sqrt{n_r})}.\label{eq: bound:mean:Markov}
\end{align}

Next we deal with the second term on the right hand side of \eqref{eq: UB:chi:square:term}. Let us introduce the shorthand notation $\tilde{Z}_{rk}=n_r^{-1}\sum_{i\in A_{r k}}\sum_{\ell=1}^{n/m}\eps_{\ell}^{(i)}\psi_{rk}(T^{(i)}_\ell)$. Note that $cov(\tilde{Z}_{rk},\tilde{Z}_{rk'}|T)=0$ for $|k-k'|\geq C$, for some large enough constant $C$, following from the disjoint support of the wavelet basis functions $\psi_{rk}$ and $\psi_{rk'}$, and 
$$\tilde{Z}_{rk}|T\sim N\big(0, \frac{1}{n_r^2}\sum_{i\in A_{r k}}\sum_{\ell=1}^{n/m}\psi_{rk}(T^{(i)}_\ell)^2\big).$$ 
Furthermore, let us denote by $\mathcal{B}_r$ the event that in each bin $I_{r,l}=[(l-1)2^{-r},l2^{-r}]$, at most $2n_r/2^{r}$ observations $T_{\ell}^{(i)}$, $i\in A_{rk}$, $\ell=1,...,m$, $k=0,...,2^{r}-1$ fall. Since there are $2^{r-j_{B_n}}\leq 2^r$ subgroups of machines at resolution level $r$ we   note that in view of Lemma \ref{lem: design} we have that $\mathbb{P}_T(\mathcal{B}_r^c)\leq   2^{2r+1} e^{-n_r2^{-r-3}}$. Then by recalling that for $r\leq j_{B,n}$, $n_r\asymp n$,  while for $r>j_{B,n}$,  $n_r=nB/(2^r\log^2n)$, we get that $n_r/2^r\gtrsim (nB/\log_2^2 n)^{\frac{2s_{\min}}{2+2s_{\min}}}\wedge n^{\frac{2s_{\min}}{1+2s_{\min}}}$, hence 
\begin{align}
\mathbb{P}_T(\mathcal{B}_r^c)\lesssim e^{-n^{\delta}},\quad \text{for any $\delta< 2s_{\min}/(2+2s_{\min})$},\label{eq: UB:Bcomp}
\end{align}
and on $\mathcal{B}_r$ the inequality $ n_r^{-2}\sum_{i\in A_{r k}}\sum_{\ell=1}^{n/m}\psi_{rk}(T^{(i)}_\ell)^2\leq C n_r^{-1}$ holds, for some sufficiently large $C>0$. Let us denote the covariance matrix of the random vector $ (\tilde{Z}_{r0},...,\tilde{Z}_{r(2^{r}-1)})|T$  by $\Sigma_T$. In view of the preceding argument the in absolute value largest entry of $\Sigma_T$ is bounded from above by $C n_r^{-1}$ on the event $T\in \mathcal{B}_r$ and by noting that $\Sigma_T$ has band size $C$, in view of Gershgorin circle theorem \cite{gershgorin:1931}, see also Lemma \ref{lem:Gershgorin} in the supplement, the eigenvalues of $\Sigma_T$  satisfy that $0<\lambda_i\leq C n_r^{-1}$, $i=1,..,2^{r}$. Then by the tail bounds of chi-square distributions, see for instance Theorem 4.1.9 of \cite{gine:nickl:2016} (or Lemma \ref{lem:chisquare} in the supplement),
\begin{align*}
 \mathbb{P}_{f_0} \big(\sum_{k=0}^{2^{r}-1} \tilde{Z}_{rk}^2 \geq \frac{C_1 2^{r}}{n_r} |T=t\big)=  \mathbb{P}\big(\sum_{i=1}^{2^{r}}\lambda_i\zeta_i^2\geq  \frac{C_1 2^{r}}{n_r}\big)\leq \mathbb{P}\big(\sum_{i=1}^{2^{r}}\zeta_i^2\geq  C_2 2^{r}\big)\lesssim e^{-C_3 2^r},
\end{align*}
for some sufficiently large constants $C_1,C_2>0$ and small $C_3>0$, where $\zeta_i\stackrel{iid}{\sim}N(0,1)$. Hence we can conclude that
\begin{align*}
 \mathbb{P}_{f_0,T} &\Big(\sum_{k=0}^{2^r-1} \big( \frac{1}{n_r}\sum_{i\in A_{r k}}\sum_{\ell=1}^{n/m}\eps_{\ell}^{(i)}\psi_{rk}(T^{(i)}_\ell)\big)^2\geq \frac{C_12^r}{n_r}\Big)\\
&\leq \int_{t\in \mathcal{B}_r}  \mathbb{P}_{f_0}\big(\sum_{k=0}^{2^{r}-1}\tilde{Z}_{rk}^2 \geq \frac{C_12^r}{n_r} |T=t \big)dt+\mathbb{P}_T(\mathcal{B}_r^c)\lesssim e^{-(C 2^{r}\wedge n^{\delta})},
\end{align*}
finishing the proof of the lemma.
\end{proof}

\section{Proofs for the $L_{\infty}$-norm}

\subsection{Proof of Theorem \ref{theorem: minimaxLinftyLB}}\label{sec: minimaxLinftyLB}
First of all we note that in the non-distributed case, where all the information is available in the central machine, the minimax $L_{\infty}$-risk is $(n/\log n)^{-\frac{s}{1+2s}}$. Since the class of distributed estimators is clearly a {subset of the class of all} estimators this will be also a lower bound for the distributed case. The rest of the proof goes similarly to the proof of Theorem \ref{sec: minimaxL2LB}. 

We consider the same subset of functions $\FF_0$ as in the proof of Theorem \ref{sec: minimaxL2LB}, with functions given by \eqref{eq: function_minimax}. Note that each function $f_\beta\in\FF_0$ belongs to the set $B_{\infty,\infty}^s(L)$, since
\begin{align*}
\|f_\beta\|_{B_{\infty,\infty}^s}=\sup_{j}2^{(s+1/2)j}\sup_{k=0,...,2^j-1}f_{\beta, jk}=2^{(s+1/2)j_n}L\delta_n^{1/2}\leq  L.
\end{align*} 
Furthermore, if $f_\beta \not = f_{\beta'}$, {then there} exists a $k\in K_{j_n}$ such that $\beta_{k}\neq \beta'_{k}$. Then due to the disjoint support of the corresponding 
Daubechies wavelets $\psi_{j_n,k}$, $k\in K_{j_n}$ the $L_{\infty}$-distance between 
the two functions  is bounded from below by
\begin{align*}
\|f_\beta-f_{\beta'}\|_{\infty}\geq  |f_{\beta,j_nk}-f_{\beta',j_nk}|\cdot \|\psi_{j_n,k}\|_{\infty}
{\gtrsim} 2^{j_n/2}\delta_n^{1/2}\geq \delta_n^{\frac{s}{1+2s}}.
\end{align*}

Now let $F$ be a uniform random variable on the set $\FF_0$. Then in view of Fano's inequality 
(see for instance Theorem \ref{thm: fano}  in the supplement with $\delta=\delta_n^{s/(1+2s)}$ and $p=1$) we get that
\begin{align*}
\inf_{\hat{f}\in\mathcal{F}_{dist}(B^{(1)},\ldots,B^{(m)};B_{\infty,\infty}^s(L) )}\sup_{f_0\in  B_{\infty,\infty}^s(L)} \mathbb{E}_{f_0,T}\Big(\|\hat{f}-f_0\|_{\infty} \Big)
\gtrsim
\delta_n^{\frac{s}{1+2s}}\Big(1-\frac{I(F;Y)+\log 2}{\log_2 |\FF_0|} \Big).
\end{align*}
We conclude the proof by noting that the term in the bracket on the right hand side of the preceding display is bounded from below by a constant, see the proof of Theorem \ref{sec: minimaxL2LB}.

\subsection{Proof of Theorem \ref{theorem: minimaxLinftyUB}}\label{sec: minimaxLinftyUB}
Similarly to the proof of Theorem \ref{theorem: minimaxL2UB1} we get that $\mathbb{E}_{f_0,T}[l({Y_{jk}^{(i)}})]\leq (1/2+o(1))\log_2 n$, hence we need at most  $(1/2+o(1))B$ bits in expected value to transmit the $\lfloor B/\log_2 n\rfloor\wedge (n/\log_2 n)^{1/(1+2s)}$ approximated coefficients. Therefore the total amount of transmitted bits per machine is bounded from above by $B$ (for large enough $n$), hence $\hat{f}\in\mathcal{F}_{dist}(B,\ldots,B; B_{\infty,\infty}^s(L))$. 

Similarly to the proof of Theorem \ref{theorem: minimaxL2UB2}, let $A_{jk}=\{ \lfloor \mu_{jk} m/\eta\rfloor +1,...,\lfloor (\mu_{jk}+1) m/\eta\rfloor \}$ be the collection of machines transmitting the $(j,k)$th approximated wavelet coefficient {and note that the size of the set satisfies $|A_{jk}|\asymp m/\eta$.} And recall that the aggregated estimator $\hat{f}$ satisfies for $2^j+k\leq (\eta   \lfloor B/\log_2 n\rfloor)\wedge (n/\log_2 n)^{1/(1+2s)} $ (i.e. the total number of different coefficients transmitted) that 
\begin{align*}
\hat{f}_{jk}&=\frac{1}{|A_{jk}|}\sum_{i\in A_{jk}}Y_{jk}^{(i)}= f_{0,jk}+Z_{jk}+W_{jk},
\end{align*}
where  $|W_{jk}|=|A_{jk}|^{-1}|\sum_{i\in A_{jk}}W_{jk}^{(i)}|\leq n^{-1/2}$ and $Z_{jk}=|A_{jk}|^{-1}\sum_{i\in A_{jk}} (\hat{f}_{jk}^{(i)}- \mathbb{E}_{f_0,T}\hat{f}_{jk}^{(i)})$. 
We show below that for all $2^j\leq n/\eta$,
\begin{align}
\mathbb{E}_{f_0,T}\sup_k |Z_{jk}|\lesssim \sqrt{(\log_2 n)\eta/n}.\label{eq: UB:supZjk}
\end{align}

Next note that by triangle inequality
\begin{align*}
\mathbb{E}_{f_0,T} \|f_0-\hat{f}\|_{\infty}\leq  \|f_0-\mathbb{E}_{f_0,T}\hat{f}\|_{\infty}+ \mathbb{E}_{f_0,T}\|\hat{f}-\mathbb{E}_{f_0,T}\hat{f}\|_{\infty}.
\end{align*}
We deal with the two terms on the right hand side separately. Let us introduce the notation 
$$j_n=\lfloor\log_2\big( (\eta \lfloor B/\log_2 n\rfloor)\wedge (n/\log_2 n)^{1/(1+2s)} \big)\rfloor\leq \log_2 (n/\eta).$$
Then by triangle inequality and noting that  there exists a universal constant $C>0$ such that for each resolution level $j$ the inequality $\big\|\sum_{k=0}^{2^j-1}|\psi_{jk}|\big\|_{\infty}\leq C 2^{j/2}$ holds,
\begin{align}
\|f_0-\mathbb{E}_{f_0,T}\hat{f}\|_{\infty}&\leq\|\sum_{j=j_n}^{\infty}\sum_{k=0}^{2^j-1}f_{0,jk}\psi_{jk} \|_{\infty}+\|\sum_{j=0}^{j_n} \sum_{k=0}^{2^j-1}\mathbb{E}_{f_0,T}W_{jk}\psi_{jk}\|_{\infty}\nonumber\\
&\leq  \|f_0\|_{B_{\infty,\infty}^s}\sum_{j=j_n}^{\infty}2^{-j(s+1/2)}  \big\|\sum_{k=0}^{2^j-1}|\psi_{jk}|\big\|_{\infty}+ n^{-1/2}\sum_{j=0}^{j_n} \big\|\sum_{k=0}^{2^j-1}|\psi_{jk}|\big\|_{\infty}\nonumber\\
&\lesssim \sum_{j=j_n}^{\infty}2^{-js}+  \sqrt{2^{j_n}/n}\lesssim 2^{-j_ns}+\sqrt{2^{j_n}/n}.\label{eq: UB:bias:Linfty}
\end{align}
Furthermore, in view of \eqref{eq: UB:supZjk},
\begin{align}
\mathbb{E}_{f_0,T}\|\hat{f}-\mathbb{E}_{f_0,T}\hat{f}\|_{\infty}
&\leq \sum_{j=0}^{j_n}   \mathbb{E}_{f_0,T}\max_{k}(|Z_{jk}| +|W_{jk}|)\big\|\sum_{k=0}^{2^j-1}|\psi_{jk}|\big\|_{\infty} \nonumber\\
&\lesssim \sum_{j=0}^{j_n} 2^{j/2}\Big(\sqrt{(\log_2 n)\eta/n}+\sqrt{1/n} \Big)  \lesssim 
\sqrt{2^{j_n}\eta (\log_2 n)/n},\label{eq: UB:var:Linfty}
\end{align}
providing the upper bound in the statement of the lemma.

It remained to prove assertion \eqref{eq: UB:supZjk}. First note that
\begin{align*}
Z_{jk}|T&\sim N (\mu_{n,m,k,T},\sigma^2_{n,m,k,T}),\qquad\text{with}\\
\mu_{n,m,k,T}&=\frac{\eta}{n}\sum_{i\in A_{jk}}\sum_{\ell=1}^{n/m}\psi_{jk}(T^{(i)}_{\ell})f_{0}(T^{(i)}_{\ell})-f_{0,jk}\lesssim 2^{j/2},\\
\sigma^2_{n,m,k,T}&=  (\frac{\eta}{n})^2\sum_{i\in A_{jk}}\sum_{\ell=1}^{n/m}\psi_{jk}^2(T^{(i)}_{\ell})\lesssim 2^j\eta/n.
\end{align*}
Using standard  bounds on the maximum of Gaussian variables (see for instance Lemma 3.3.4 of \cite{gine:nickl:2016}) we have that
\begin{align*}
\mathbb{E}_{f_0|T} \max_{k} |Z_{jk}-\mathbb{E}_{f_0|T}Z_{jk}|\leq \sqrt{2(j+1)}\max_k\sigma_{n,m,k,T}.
\end{align*}
Furthermore, note that for $k\geq 2$
\begin{align*}
\mathbb{E}_{T}\big(\psi_{jk}(T^{(i)}_{\ell})f_{0}(T^{(i)}_{\ell}) \big)_+^k\leq \|f_0\|_{\infty}^k \|\psi_{jk} \|_{\infty}^{k-2}  \mathbb{E}_{T}\psi_{jk}(T^{(i)}_{\ell})^2\lesssim 2^{(k-2)j/2},
\end{align*}
hence in view of Bernstein's inequality (with $c=C 2^{j/2}$ and $v=Cn/\eta$), see Proposition 2.9 of \cite{massart2007concentration} (or Lemma \ref{lem: Bernstein} in the supplement), we get that
\begin{align*}
\mathbb{P}_T\Big(|\mu_{n,m,k,T}|\geq C (\sqrt{\frac{\gamma \eta\log_2 n}{n}}+\frac{2^{j/2}\eta}{n} )\Big)\lesssim (n/\eta)^{-\gamma},
\end{align*}
which implies for $2^j\leq n/\eta$ that
\begin{align}
\mathbb{P}_T\Big(\max_k|\mu_{n,m,k,T}|\geq C_\gamma \sqrt{ (\log_2 n)\eta/n} \Big)\lesssim (n/\eta)^{-\gamma+1}.\label{eq: UB:exp:cond}
\end{align}
Therefore one can deduce that
\begin{align*}
\mathbb{E}_T \Big( \max_{k}|\mu_{n,m,k,T}|\Big)&\leq C_\gamma  \sqrt{(\log_2 n)\eta/n} + 2^{j/2}(n/\eta)^{-\gamma+1}\\
&\lesssim  \sqrt{ (\log_2 n)\eta/n},
 \end{align*}
for large enough choice of $\gamma>0$. 
Combining the above displays leads to
\begin{align*}
\mathbb{E}_{f_0,T}\max_k|Z_{jk}| &=\mathbb{E}_T (\mathbb{E}_{f_0|T}(\max_k|Z_{jk}|) )\\
&\leq \mathbb{E}_T\big( \max_{k}|\mu_{n,m,k,T}| \big)
+\sqrt{2(j+1)}\mathbb{E}_T\max_k\sigma_{n,m,k,T}\\
&\leq c( \sqrt{(\log_2 n)\eta/n}+2^{j/2}\sqrt{j}e^{-c n^{\delta}})\leq C \sqrt{(\log_2 n)\eta/n},
\end{align*}
for some large enough constants $c,C>0$ and $2^{j}\leq n/\eta$, where in the last line we have used that under the event  
$\mathcal{B}_j$ (i.e. the event that in each bin $I_{j,l}=[(l-1)2^{-j},l2^{-j}]$, at most $2n/(\eta 2^{j})$ observations $T_{\ell}^{(i)}$, $i\in A_{jk}$, $\ell=1,...,n/m$, $k=0,...,2^{j}-1$ fall) we have that $\max_k\sigma_{n,m,k,T}^2\leq C$, and $\mathbb{P}_T(\mathcal{B}_j^c)\leq C e^{-cn^{\delta}}$, see \eqref{eq: UB:Bcomp}. 

\subsection{Proof of Theorem \ref{theorem: adaptive:Linfty}}\label{sec: adaptive:Linfty}

The proof of the theorem goes similarly to the proof of Theorem \ref{theorem: adaptive}, here we only highlight the differences. First recall that for every $s,L>0$ and $f_0\in B_{\infty,\infty}^s(L)$ we have $f_{0,jk}\leq L$, for all $j\geq 0, k\in \{0,1,...,2^j-1\}$, hence following from the same argument as in Theorem \ref{theorem: adaptive}, the estimator belongs to $\mathcal{F}_{dist}(B,\ldots,B; B_{\infty,\infty}^{s}(L))$.

Let us next introduce the notations
$B(j,f_0)=2^{-js} \|f_0\|_{B_{\infty,\infty}^s}$ and
\begin{align*}
j^*=\min\big\{ j\in\mathcal{J}:\,  B(j,f_0)\leq  \sqrt{j 2^j/n_j} \big\}.
\end{align*}
Then by the definition of $j^*$ we have
\begin{align*}
\sqrt{(j^*-1)2^{j^*-1}/n_{j^*-1}}< B(j^*-1,f_0)= 2^{s} B(j^*,f_0)\leq  2^{s}\sqrt{  j^*2^{j^*}/n_{j^*}}.
\end{align*}
Distinguish again three cases according to the value of $j^*$ we get that 
\begin{equation}\label{eq: opt:resolution:Linfty}
2^{j^*}\asymp\begin{cases}
(n/\log_2 n)^{1/(1+2s)}, &\text{if $B\geq  C_L (n/\log_2 n)^{1/(1+2s)}\log_2 n$,}\\
B/\log_2 n, &\text{if $ (\frac{n}{\log_2 n})^{\frac{1}{1+2s}}(\log_2 n)^{\frac{2s}{1+2s}}\leq  B< C_L  (\frac{n}{\log_2 n})^{\frac{1}{1+2s}}\log_2 n$,}\\
(nB/\log_2^3 n)^{1/(2+2s)},&\text{if  $B< (n/\log_2 n)^{1/(1+2s)}(\log_2 n)^{\frac{2s}{1+2s}}$,}
\end{cases}
\end{equation}
and
\begin{equation}\label{eq: opt:resolution_noise:Linfty}
n_{j^*}\gtrsim\begin{cases}
n, &\text{if $B\geq  C_L (n/\log_2 n)^{1/(1+2s)}\log_2 n$,}\\
n/\log_2 n, &\text{if $  (\frac{n}{\log_2 n})^{\frac{1}{1+2s}}(\log_2 n)^{\frac{2s}{1+2s}}\leq  B< C_L  (\frac{n}{\log_2 n})^{\frac{1}{1+2s}}\log_2 n$,}\\
(nB/\log_2^{\frac{1+4s}{1+2s}} n)^{\frac{1+2s}{2+2s}},&\text{if  $B<  (n/\log_2 n)^{1/(1+2s)}(\log_2 n)^{\frac{2s}{1+2s}}$,}
\end{cases}
\end{equation}
where $C_L= 4 \big(L^2(1+2s)\big)^{1/(1+2s)}$. Note that in all cases $j^*\leq j_{\max}$ holds.

We split the risk into two parts
\begin{align}
\mathbb{E}_{f_0,T}\|f_0-\hat{f}\|_{\infty}= \mathbb{E}_{f_0,T}\|f_0-\tilde{f}(\hat{j})\|_{\infty} 1_{\hat{j}>j^*}+ \mathbb{E}_{f_0,T}\|f_0-\tilde{f}(\hat{j})\|_{\infty} 1_{\hat{j}\leq j^*}\label{eq: risk:two:parts:Linfty}
\end{align}
and deal with each term on the right-hand side separately. Note that in view of the definition of $\hat{j}$ and assertions \eqref{eq: UB:bias:Linfty} and \eqref{eq: UB:var:Linfty}
\begin{align*}
\mathbb{E}_{f_0,T}\|f_0-\tilde{f}(\hat{j})\|_\infty 1_{\hat{j}\leq j^*}&\leq  \mathbb{E}_{f_0,T}\|\tilde{f}(j^*)-\tilde{f}(\hat{j})\|_{\infty} 1_{\hat{j}\leq j^*}+ \mathbb{E}_{f_0,T}\|\tilde{f}(j^*)-f_0\|_{\infty}\\
&\leq \tau\sqrt{ j^* 2^{j^*}/n_{j^*}}+ \|\mathbb{E}_{f_0,T}\tilde{f}(j^*)-f_0\|_\infty+\mathbb{E}_{f_0,T}\|\tilde{f}(j^*)-\mathbb{E}_{f_0,T} \tilde{f}(j^*)\|_{\infty}\\
&\lesssim  \sqrt{(\log_2 n)2^{j^*}/n_{j^*}}+2^{-j^*s},
\end{align*}
which implies together with \eqref{eq: opt:resolution:Linfty} and \eqref{eq: opt:resolution_noise:Linfty} that
\begin{equation*}
\mathbb{E}_{f_0,T}\|f_0-\hat{f}\|_{\infty} 1_{\hat{j}\leq j^*}\lesssim
\begin{cases}
\Big(\frac{n}{\log_2 n}\Big)^{-\frac{s}{1+2s}},&\text{if  $B\geq  C_L (n/\log_2 n)^{1/(1+2s)}\log_2 n$,}\\
 \sqrt{\frac{B\log_2 n}{n}}, &\text{if $ (\frac{n}{\log_2 n})^{\frac{1}{1+2s}}(\log_2 n)^{\frac{2s}{1+2s}}\leq  B\leq C_L    (\frac{n}{\log_2 n})^{\frac{1}{1+2s}}\log_2 n$,}\\
\Big(\frac{nB}{\log_2^3 n}\Big)^{-\frac{s}{2+2s}},&\text{if  $B\leq (\frac{n}{\log_2 n})^{\frac{1}{1+2s}}(\log_2 n)^{\frac{2s}{1+2s}}$.}
\end{cases}
\end{equation*}

Next we deal with the first term on the right hand side of \eqref{eq: risk:two:parts:Linfty}. First note that in view of \eqref{eq: UB:bias:Linfty},
\begin{align*}
\|f_0-\mathbb{E}_{f_0,T}\tilde{f}(j)\|_{\infty}^2&\lesssim  2^{-2js}+2^{j}/n.
\end{align*}
Furthermore, by using the upper bound $\psi_{lk}^2\lesssim 2^l$
\begin{align*}
\mathbb{E}_{f_0,T}\|\tilde{f}(j)-\mathbb{E}_{f_0,T}\tilde{f}(j)\|_{\infty}^2
&\lesssim\mathbb{E}_{f_0,T}\Big( \sup_{x\in[0,1]}   \sum_{l=0}^{j}  \sum_{k=0}^{2^l-1}|\psi_{lk}(x)|  (|Z_{lk}| +|W_{lk}|)\Big)^2\\
&\lesssim 2^{2j}\mathbb{E}_{f_0,T} \sum_{l=0}^{j}  \sum_{k=0}^{2^l-1} \big(Z_{lk}^2+W_{lk}^2\big)\\
&\lesssim 2^{3j}(\mathbb{E}_{f_0,T}Z_{lk}^2+ n^{-1})\lesssim 2^{3j}.
\end{align*}

Then by Cauchy-Schwarz inequality and Lemma \ref{lem: Lem:8.2.1redo:Linfty} we get that
\begin{align*}
\mathbb{E}_{f_0,T}\|f_0-\hat{f}\|_\infty 1_{\hat{j}> j^*}&\leq \sum_{j=j^*+1}^{j_{\max}} \mathbb{E}_{f_0,T}^{1/2}\|f_0-\tilde{f}(j)\|_\infty^2\, \mathbb{P}_{f_0,T}^{1/2}(\hat{j}=j)\\
& \lesssim \sum_{j=j^*+1}^{j_{\max}} 2^{(3/2)j} \mathbb{P}_{f_0,T}^{1/2}(\hat{j}=j)\lesssim
2^{j^*}e^{-c\tau^2 j^*}+2^{(3/2)j_{\max}} n^{-2}\\
&=o( 2^{- j^*s}+1/\sqrt{n}),
\end{align*}
for sufficiently large choice of $\tau>0$, resulting in the required upper bound and concluding the proof of our statement.

\begin{lemma}\label{lem: Lem:8.2.1redo:Linfty}
Assume that $f_0\in B_{\infty,\infty}^s(L)$, for some $s,L>0$. Then for every $C>0$ there exist positive constants $c>0$ such that for every $j>j^*$ and sufficiently large $\tau>0$ we have
\begin{align*}
\mathbb{P}_{f_0,T}(\hat{j}=j)\lesssim e^{-c\tau^2 j}+n^{-2}.
\end{align*}
\end{lemma}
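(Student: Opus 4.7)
The plan is to mimic the structure of the proof of Lemma \ref{lem: Lem:8.2.1redo}, but replace the Parseval-plus-chi-square step with an $L_\infty$ Gaussian-supremum argument based on Borell's inequality. By the definition of $\hat j$, the event $\{\hat j = j\}$ with $j > j^*$ forces the existence of some $l \in \{j, \ldots, j_{\max}\}$ with
$\|\tilde f(j-1) - \tilde f(l)\|_\infty > \tau\sqrt{l 2^l/n_l}$.
A union bound over $l$ then reduces the claim to showing
$\mathbb P_{f_0,T}\bigl(\|\tilde f(j^-) - \tilde f(l)\|_\infty > \tau \sqrt{l 2^l/n_l}\bigr) \lesssim e^{-c\tau^2 l} + n^{-3}$,
where $j^- = j-1 \geq j^*$.

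For a fixed $l$ I would decompose, in wavelet coordinates,
\begin{align*}
\tilde f(j^-) - \tilde f(l) = -\sum_{r=j^-}^{l-1}\sum_{k=0}^{2^r-1}\bigl(f_{0,rk} + Z_{rk} + W_{rk}\bigr)\psi_{rk},
\end{align*}
and bound the three pieces in $\|\cdot\|_\infty$ separately, using the bookkeeping inequality $\bigl\|\sum_k |\psi_{rk}|\bigr\|_\infty \lesssim 2^{r/2}$. The bias piece uses $|f_{0,rk}| \lesssim L\, 2^{-r(s+1/2)}$ to give $\bigl\|\sum_{r\geq j^-}\sum_k f_{0,rk}\psi_{rk}\bigr\|_\infty \lesssim 2^{-j^- s}$, which by the definition of $j^*$ and monotonicity of $j\mapsto \sqrt{j 2^j/n_j}$ is dominated by $\sqrt{l 2^l/n_l}$. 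The approximation piece uses $|W_{rk}|\leq n^{-1/2}$ and telescoping to give $\lesssim \sqrt{2^l/n} \leq \sqrt{2^l/n_l}$. Hence both deterministic pieces are already absorbed into a constant times $\sqrt{l 2^l/n_l}$, and what remains is to bound the probability that the stochastic piece $G := \sum_{r=j^-}^{l-1}\sum_k Z_{rk}\psi_{rk}$ exceeds $c\tau\sqrt{l 2^l/n_l}$ in $\|\cdot\|_\infty$.

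For the stochastic piece I would split $Z_{rk} = \mu_{rk,T} + \xi_{rk,T}$ into its conditional-design mean (analogous to $\mu_{n,m,k,T}$ in the proof of Theorem \ref{theorem: minimaxLinftyUB}) and a centered Gaussian part given $T$. The contribution of $\mu_{rk,T}$ is controlled by the same Bernstein argument as in that proof: on the design event $\mathcal{B}_r$ of \eqref{eq: UB:Bcomp}, $\max_k|\mu_{rk,T}| \lesssim \sqrt{(\log n)/n_r}$ holds with $\mathbb{P}_T$-probability at least $1 - n^{-\gamma}$ for any fixed $\gamma$, and the resulting $L_\infty$ contribution is $\lesssim \sum_{r\leq l-1}2^{r/2}\sqrt{(\log n)/n_r} \lesssim \sqrt{(\log n)\,2^l/n_l}\lesssim \sqrt{l 2^l/n_l}$, where the telescoping uses the explicit form of $n_r$ in both the regimes $r < j_{B,n}$ and $r\geq j_{B,n}$ and the fact that $l\geq j^*\gtrsim \log_2 n$. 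For the Gaussian process $\tilde G(x) = \sum_{r,k}\xi_{rk,T}\psi_{rk}(x)$, on $\bigcap_r\mathcal{B}_r$ the conditional variance satisfies $\sup_x \operatorname{Var}(\tilde G(x)\mid T) \lesssim \sum_{r\leq l-1}(1/n_r)\cdot 2^r \lesssim 2^l/n_l$, while $\mathbb E[\|\tilde G\|_\infty\mid T]\lesssim \sum_{r\leq l-1} 2^{r/2}\sqrt{r/n_r}\lesssim \sqrt{l 2^l/n_l}$ by the standard maximum-of-Gaussians bound at each level. Borell's inequality then yields
\begin{align*}
\mathbb{P}\bigl(\|\tilde G\|_\infty > C\sqrt{l 2^l/n_l} + t\,\big|\, T\in \cap_r\mathcal{B}_r\bigr) \leq 2\exp\!\bigl(-c\, t^2 n_l/2^l\bigr),
\end{align*}
and setting $t = c'\tau\sqrt{l 2^l/n_l}$ gives a bound of $2e^{-c''\tau^2 l}$.

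Putting everything together via union bound over $l\in\{j,\ldots,j_{\max}\}$, the design exceptional probabilities $\mathbb P_T(\mathcal{B}_r^c)\lesssim e^{-n^\delta}$ and the Bernstein $n^{-\gamma}$ terms get absorbed into an $n^{-2}$ residual (using $j_{\max}\lesssim \log n$ and a sufficiently large $\gamma$), while the Gaussian tail sum $\sum_{l\geq j}e^{-c''\tau^2 l}\lesssim e^{-c''\tau^2 j}$ gives the desired exponential term. The main obstacle I foresee is the Gaussian step: obtaining the Borell bound on the supremum of a sum of Gaussian wavelet fluctuations across a range of resolution levels with heterogeneous variances $1/n_r$, and aligning the conditional-variance estimate $2^l/n_l$ with the expected supremum $\sqrt{l 2^l/n_l}$ so that the tail parameter in Borell scales correctly in $\tau^2 l$ (and not, e.g., $\tau^2$). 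All other ingredients are essentially the $L_\infty$-analogues of steps already carried out in the proofs of Lemma \ref{lem: Lem:8.2.1redo} and Theorem \ref{theorem: minimaxLinftyUB}.
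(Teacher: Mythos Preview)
Your proposal is correct and follows the same overall skeleton as the paper's proof (union bound over $l$, decomposition into bias/approximation/stochastic pieces, Bernstein for the conditional design means $\mu_{rk,T}$, Gaussian tail for the centered noise). The difference is in how you handle the Gaussian fluctuation term.

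The paper does \emph{not} invoke Borell's inequality. Instead it works level by level: it bounds $\|\tilde f(l)-\mathbb{E}_{f_0,T}\tilde f(l)\|_\infty\le C\bigl(\sum_{r=0}^{l}2^{r/2}\max_k|Z_{rk}|+\sqrt{2^l/n}\bigr)$, and then proves the pointwise maximal inequality
\[
\mathbb{P}_{f_0,T}\Bigl(\max_k|Z_{lk}|\ge \tau\sqrt{\gamma l/n_l}\Bigr)\lesssim n_l^{1-\gamma}+e^{-c\tau^2 l}
\]
via a single-coordinate Gaussian tail bound combined with a union bound over the $2^l$ indices $k$ (the factor $2^l$ is absorbed since $\tau$ is chosen large). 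The probabilities for levels $r<l$ are dominated by the level-$l$ probability because $n_r\ge n_l$, so the paper simply multiplies by $l$ and sums over $l$.

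Your route---applying Borell directly to the multi-resolution Gaussian process $\tilde G(x)=\sum_{r=j^-}^{l-1}\sum_k\xi_{rk,T}\psi_{rk}(x)$---is a valid and somewhat cleaner alternative: it avoids the level-by-level union bound and the monotonicity argument, at the cost of having to check the uniform variance bound $\sup_x\operatorname{Var}(\tilde G(x)\mid T)\lesssim 2^l/n_l$ across correlated levels. Two minor points to tighten: (i) the Bernstein exceptional probability for $\max_k|\mu_{rk,T}|$ is $n_r^{1-\gamma}$, not $n^{-\gamma}$; its absorption into $n^{-2}$ then uses $n_{j_{\max}}\gtrsim\sqrt{n}$ and $\gamma\ge 5$, exactly as in the paper; (ii) your direct decomposition over levels $r\in\{j^-,\dots,l-1\}$ is slightly sharper than the paper's triangle-inequality split (which brings in all levels $0,\dots,l$), but both suffice.
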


\begin{proof}
Let us introduce the notation $j^{-}=j-1$ and note that for every $j>j^*$ we have $j^{-}\geq j^*$. Then by the definition of $\hat{j}$
\begin{align*}
\mathbb{P}_{f_0,T}(\hat{j}=j)\leq \sum_{l=j}^{j_{\max}}\mathbb{P}_{f_0,T}\big(\|\tilde{f}(j^-)- \tilde{f}(l)\|_\infty> \tau \sqrt{l2^l/n_l} \big).
\end{align*}
By triangle inequality
\begin{align*}
\|\tilde{f}(j^-)- \tilde{f}(l)\|_\infty&\leq \|\tilde{f}(j^-)-\mathbb{E}_{f_0,T}\tilde{f}(j^-)\|_\infty + \|\tilde{f}(l)-\mathbb{E}_{f_0,T}\tilde{f}(l)\|_\infty\\
&\qquad+\| \mathbb{E}_{f_0,T}\tilde{f}(j^-)-\mathbb{E}_{f_0,T}\tilde{f}(l) \|_\infty.
\end{align*}
We deal with the terms on the right hand side separately. First note that
\begin{align*}
\| \mathbb{E}_{f_0,T}\tilde{f}(j^-)-\mathbb{E}_{f_0,T}\tilde{f}(l) \|_\infty
&\leq\|\sum_{r= j^-}^l\sum_{k=0}^{2^r-1}f_{0,rk}\psi_{rk} \|_{\infty}+\|\sum_{r= j^-}^l\sum_{k=0}^{2^r-1}\mathbb{E}_{f_0,T}W_{rk}\psi_{rk}\|_{\infty}\nonumber\\
&\leq c(\|f_0\|_{B_{\infty,\infty}^s} 2^{-j^-s}+\sqrt{2^{l}/n})\leq C(B(j^-,f_0)+\sqrt{2^{l}/n})\\
&\leq C\big(B(j^*,f_0)+\sqrt{2^{l}/n}\big)\leq C (\sqrt{j^*2^{j^*}/n_{j^*}}+\sqrt{2^{l}/n})\\
&\leq   C \sqrt{l2^{l}/n_l}.
\end{align*}
Furthermore,
\begin{align*}
\|\tilde{f}(l)-\mathbb{E}_{f_0,T}\tilde{f}(l)\|_\infty&\leq \sum_{j=0}^{l}\max_k (|Z_{jk}|+|W_{jk}|)  \sup_{x\in[0,1]}\sum_{k=0}^{2^j-1}|\psi_{jk}(x)|\\
&\leq C\big(\sum_{j=0}^{l}2^{j/2} \max_k |Z_{jk}| +\sqrt{2^l/n} \big).
\end{align*}
We show below that for any $\gamma\geq 1$,
\begin{align}
\mathbb{P}_{f_0,T} \big(\max_{k} (|Z_{lk}| \geq \tau \sqrt{\gamma l/n_l} \big)\lesssim  n_l^{1-\gamma}+ e^{-c\tau^2 l } \label{eq: UB:max:Gaussian}
\end{align}
holds for some sufficiently large $\tau>0$ and sufficiently small $c>0$. By combining the above results we get that 
\begin{align*}
\mathbb{P}_{f_0,T} \Big( \|\tilde{f}(j^-)- \tilde{f}(l)\|_\infty \geq \tau \sqrt{l2^l/n_l} \Big)
&\lesssim \mathbb{P}_{f_0,T} \Big( \|\tilde{f}(l)-\mathbb{E}_{f_0,T}\tilde{f}(l)\|_\infty \geq \frac{\tau-C}{2} \sqrt{l2^l/n_l} \Big)\\
&\lesssim \sum_{j=0}^l \mathbb{P}_{f_0,T} \Big( \max_{k} |Z_{jk}| \geq \frac{\tau-2C}{2C} \sqrt{l/n_l}  \Big)\\
&\leq l \mathbb{P}_{f_0,T} \Big( \max_{k} |Z_{lk}| \geq \frac{\tau-2C}{2C} \sqrt{l/n_l}  \Big)\\
&\lesssim   (\log_2 n)n_l^{1-\gamma}+ e^{-(c/2)\tau^2 l }.
\end{align*}
The above inequality together with the first display of the proof then implies that
\begin{align*}
\mathbb{P}_{f_0,T}(\hat{j}=j)&\lesssim \sum_{l=j}^{j_{\max}}\big( (\log_2 n)n_{l}^{1-\gamma}+e^{-(c/2)\tau^2l}\big)\\
&\lesssim (\log_2  n)^2n_{j_{\max}}^{1-\gamma}+ e^{-(c/2)\tau^2j}\lesssim n^{-2}+e^{-(c/2)\tau^2j},
\end{align*}
for $\gamma\geq 5$, in view of $n_{j_{\max}}\gtrsim (nB/\log_2^2 n)^{\frac{1+2s_{\min}}{2+2s_{\min}}}\geq \sqrt{n}$, for any $s_{\min}>0$, providing the statement of the lemma.

It remained to prove assertion \eqref{eq: UB:max:Gaussian}. Note that by triangle inequality we get that
\begin{align}
\max_{k} |Z_{lk}|\leq \max_{k} |Z_{lk}-\mathbb{E}_{f_0|T} Z_{lk}| + \max_{k} |\mathbb{E}_{f_0|T} Z_{lk}|. \label{eq: help:UB:max}
\end{align}
In view of assertion \eqref{eq: UB:exp:cond} with  $\mathbb{P}_T$-probability at least $1-Cn_l^{1-\gamma}$ the second term on the right hand side is bounded from above by $C\sqrt{\gamma l/n_l}$.
Furthermore recall from the proof of Theorem \ref{theorem: minimaxLinftyUB} (i.e. assertion \eqref{eq: UB:Bcomp}) that $n_l^{-2}\sum_{i\in A_{lk}}\sum_{\ell=1}^{n/m} \psi_{lk}^2( T_\ell^{(i)})\lesssim n_l^{-1}$ holds with $\mathbb{P}_T$- probability at least $1-Ce^{-cn^\delta}$, for some sufficiently small $\delta>0$. Under the above event we have that there exists small enough constant  $c>0$ such that
\begin{align*}
\mathbb{P}_{f_0|T}\big(|Z_{l 1}-\mathbb{E}_{f_0|T}Z_{l1}| \geq \tau \sqrt{l/n_l}\big)\leq \exp\{- c\tau^2 l\}.
\end{align*}
Therefore the first term on the right hand side of \eqref{eq: help:UB:max} is bounded from above by $\tau \sqrt{l/n_l}$ with $\mathbb{P}_{f_0|T}$-probability at least $1-C 2^le^{-c\tau^2 l}\leq 1-Ce^{-(c/2)\tau^2 l}$ on $T\in\mathcal{B}_l$, for some sufficiently large constants $\tau,C>0$ and sufficiently small positive constant $c$.

\end{proof}

\section{Technical lemmas}
In this section we provide the technical lemmas applied in the previous two sections.
\subsection{Proof of Lemma \ref{lem: mutual_regression}}\label{sec: proof: lem:reg}
Without loss of generality we can assume that $T_1^{(i)}\leq T_2^{(i)}\leq...\leq T_{n/m}^{(i)}$, $i=1,...,m$, and let $\ell_k=\ell_k^{(i)}=\max\big\{j\in\{1,...,n/m\} :\, T_j^{(i)}\in I_k\big\}$ denote the index of the largest element $T_{j}^{(i)}$ in the interval $I_k=[(k-1)\tilde{C}2^{-j_n},k\tilde{C}2^{-j_n}]$, $k=1,...,|K_{j_n}|=2^{j_n}/\tilde{C}$. Note that $ T_{\ell_{k-1}+1}^{(i)},..., T_{\ell_{k}}^{(i)}\in I_k$. For convenience let us introduce the following notations
\begin{align*}
X_{[j_1:j_2]}^{(i)}&=(X_{j_1}^{(i)},X_{j_1+1}^{(i)},...,X_{j_2}^{(i)}),\\
d&=|K_{j_n}|,\\
F_{-k}&=(F_1,..,F_{k-1},F_k,...,F_{d}),\\
\delta&=L\delta_n^{1/2}2^{j_n/2}\|\psi\|_\infty,\\
a^2&=\frac{2^5 n \delta^2}{dm/ \log(dm)},\\
\mu_k(t)&=\big(L\delta_n^{1/2}\psi_{j_n,k}(t_j)\big)_{j=(\ell_{k-1}+1),...,\ell_k},\\
B_k(t)&=\big\{x\in\mathbb{R}^{\ell_k-\ell_{k-1}}: \, |\mu_k(t)^T x| \leq a\big\},\\
\mathcal{B}&=\{t\in [0,1]^{n/m}:\, \frac{n}{2dm}\leq \ell_{k}-\ell_{k-1}\leq \frac{2n}{dm},\, k=1,...,d\}.
\end{align*}

Note that $X_{[(\ell_{k-1}+1):\ell_k]}^{(i)}|(T^{(i)},F_k)$ is independent of $F_{-k}$ and
\begin{align*}
X_{[(\ell_{k-1}+1):\ell_k]}^{(i)}|(T^{(i)}=t,F_k=\beta_k)\sim \mathbb{P}_{\beta_k|T^{(i)}=t}^{(i)}=N_{\ell_k-\ell_{k-1}}(\beta_k\mu_k(t),I).
\end{align*}
Furthermore, note that the inequalities $\delta^2\leq \frac{0.4^2  md}{2^7n\log(dm)}$ (in view of $\bar{C}\geq 0.4^{-2} 2^{8}L^2\|\psi\|_{\infty}^2\tilde{C}$)  and $n/m\geq 2^6d\log(n/m)$ (in view of $m=O(n^{\frac{2s}{1+2s}}/\log^2 n)$) hold. 

Then by the definition of $B_k(t)$ we have for all $t\in[0,1]^{n/m}$ and $k=1,...,d$ that 
\begin{align*}
\sup_{x\in B_k(t)} \frac{\phi_{\mu_k(t)}(x)}{\phi_{-\mu_k(t)}(x)}&=
\sup_{{x}\in B_k(t)}\exp\Big\{\frac{\big| \|{x}-\mu_k(t)\|_2^2-\|{x}+\mu_k(t)\|_2^2\big|}{2}\Big\}\\
&= \sup_{{x}\in B_k(t)} \exp\{ 2|{x}^T\mu_k(t)| \}=\exp\{ 2a\},
\end{align*}
where $\phi_{{\mu}}$ denotes the density function of a normal distribution with mean vector ${\mu}$ and identity covariance matrix. Then by Theorem \ref{thm: mutualUB:condindependent} in the supplement (with $\mathcal{F}_0=\big\{{\beta}=(\beta_k )_{k=1..d}:\,\beta_k\in \{-1,1\}, k=1,...,d\big\}$) we have that
\begin{align}
I(F;Y^{(i)})&=\int_{[0,1]^{n/m}}I(F;Y^{(i)}|T^{(i)}=t)dt\label{eq: Ub:mutual:help}\\
&\leq \sum_{k=1}^{d}  (\log 2) \int_{[0,1]^{n/m} } \sqrt{ \mathbb{P}_{\beta_k|T^{(i)}=t}^{(i)}\big( X^{(i)}_{[(\ell_{k-1}+1):\ell_k] }\notin B_k(t)\big)}d t\nonumber\\
&\qquad +  \sum_{k=1}^{d}\int_{[0,1]^{n/m}}\mathbb{P}_{\beta_k|T^{(i)}=t}^{(i)}\big(X^{(i)}_{[(\ell_{k-1}+1):\ell_k] }\notin B_k(t)\big)dt\nonumber\\
&\qquad+ 2{C^2(C-1)^2}  I\big({X}^{(i)};{Y}^{(i)}|T^{(i)}\big),\nonumber
\end{align}
with $C=\exp\{ 2^{7/2}\delta \sqrt{n\log(dm)} / \sqrt{dm}\}$.

Note that $ I\big({X}^{(i)};{Y}^{(i)}|T^{(i)}\big) \leq H({Y}^{(i)}|T^{(i)})\leq H({Y}^{(i)})$. In view of Lemma \ref{lem: design} we have that $\mathbb{P}_T(T^{(i)}\in \mathcal{B})\geq1- 2 de^{-n/(8md)}\geq 1-2(md)^{-4}$ following from the inequality $n/m\geq 2^6d \log(n/m)$. Besides for arbitrary $t\in \mathcal{B}$ we have in view of 
\begin{align*}
\|\mu_k(t)\|_2^2\leq  \sum_{j=\ell_{k-1}+1}^{\ell_k} \delta_n \psi_{j_n,k}(t_j)^2\leq \|\psi\|_{\infty}^2\delta_n2^{j_n}(\ell_k-\ell_{k-1})\leq 2 n\delta^2/(md)
\end{align*}
that
\begin{align*}
 \mathbb{P}_{{f}_k}^{(i)}({X}_{[(\ell_{k-1}+1):\ell_k]}^{(i)}\notin B_k(t)| T^{(i)}=t)&= \mathbb{P}_f^{(i)}( |\mu_k(t)^T {X}_{{[(\ell_{k-1}+1):\ell_k]}}^{(i)}|  > a|T^{(i)}=t)\\
&\leq 2 \exp\Big\{-\frac{ (a-\|\mu_k(t)\|_2^2)^2}{2\|\mu_k(t)\|_2^2}\Big\}\\
&\leq 2 \exp\Big\{-\frac{ a^2}{4\|\mu_k(t)\|_2^2}\Big\}
\leq 2 (md)^{-4}.
\end{align*}
Therefore 
\begin{align*}
\int_{[0,1]^{n/m}}&\sqrt{\mathbb{P}_{\beta_k|T^{(i)}=t}^{(i)}\big(X^{(i)}_{[(\ell_{k-1}+1):\ell_k]}\notin B_k(t)\big)}dt\\
&\leq \int_{\mathcal{B}}\sqrt{\mathbb{P}_{\beta_k|T^{(i)}=t}^{(i)}\big(X^{(i)}_{[(\ell_{k-1}+1):\ell_k]}\notin B_k(t)\big)}dt+\mathbb{P}_T(T^{(i)}\notin \mathcal{B} )\\
&\leq \sqrt{2} (md)^{-2}+ 2(md)^{-4}\leq 2 (md)^{-2},
\end{align*}
and similarly $\int_{[0,1]^{n/m}}\mathbb{P}_{f|T^{(i)}=t}^{(i)}\big( X^{(i)}_{[(\ell_{k-1}+1):\ell_k]}\notin B_k(t)\big)dt\leq 4 (md)^{-4}$. Then by plugging in the above inequalities into \eqref{eq: Ub:mutual:help} and using the inequalities $e^{x}\leq 1+2x$ for $x\leq 0.4$ and ${C^2}\leq 2$  we get that
\begin{align*}
I({F};{Y}^{(i)})\leq  \frac{4\log 2}{m^{2}d} + \frac{2^{12} \delta^2n\log(dm)}{md}  H({Y}^{(i)}).
\end{align*}

Furthermore, from the data-processing inequality and the convexity of the KL divergence 
\begin{align}
I({F};{Y}^{(i)})&\leq I\big({F};(T^{(i)},{X}^{(i)})\big)\leq I\big(F;{X}^{(i)}|T^{(i)}\big)+I\big(F;T^{(i)}  \big)\label{eq: UB:mutual}\\
&=\int_{t\in [0,1]^{n/m}} I\big(F;{X}^{(i)}|T^{(i)}=t\big) dt\nonumber\\
&\leq \int_{t\in [0,1]^{n/m}}  \frac{1}{|\mathcal{F}_0|^2}\sum_{\beta,\beta'\in\mathcal{F}_0}K( \mathbb{P}_{\beta|T^{(i)}=t}^{(i)}\|  \mathbb{P}_{\beta'|T^{(i)}=t}^{(i)})dt\nonumber\\
&\leq\frac{1}{2|\mathcal{F}_0|^2}\sum_{\beta,\beta'\in\mathcal{F}_0} \sum_{\ell=1}^{n/m}\sum_{k\in K_{j_n}}(\beta_k'-\beta_{k})^2L^2\delta_n \int_{0}^1 \psi_{j_n,k}^2(t_{\ell})dt_{\ell}\nonumber\\
&\leq  2\delta^2n/m.\nonumber
\end{align}

Then by combining the previous upper bounds and using the data processing inequality $I({F};{Y})\leq \sum_{i=1}^m I({F};{Y}^{(i)})$ we get that
\begin{align*}
I(F;Y)
&\leq \frac{4\delta^2n}{m}\sum_{i=1}^m\min\Big\{ 2^{10}\log(md)d^{-1} H(Y^{(i)}),1\Big\}+4\log 2\\
&\leq \frac{4L^2\delta_n2^{j_n}\|\psi\|_\infty^2 n}{m}\sum_{i=1}^m\min\Big\{ 2^{10}\log(md)d^{-1} H(Y^{(i)}),1\Big\}+4\log 2.
\end{align*}
Finally we arrive to our statement by using Lemma \ref{lem: Shannon} and $2^{j_n}=\tilde{C}d$.

\begin{remark}
We  note that in \cite{yang:barron:1999} it is sufficient to provide the upper bound \eqref{eq: UB:mutual} for the mutual information as there is no limitation in the amount of transmitted bits. In our setting one has to take into account the code length as well, hence sharper upper bounds are required, which is actually the core and most challenging part of the proof of Lemma \ref{lem: mutual_regression}.
\end{remark}

\begin{lemma}\label{lem: design}
Let us take $X_1,X_2,...,X_n\stackrel{iid}{\sim}MN(1/r,1/r,....,1/r)$ and denote by $\chi_k=\{\ell\in\{1,...,n\}: X_\ell=k\}$ the index set of the observations belonging to the $k$th bin, $k=1,...,r$. Then
\begin{align*}
P( 2^{-1}n/r \leq |\chi_k| \leq 2n/r, k=1,...,r )\geq 1- 2r e^{-n/(8r)}.
\end{align*}
\end{lemma}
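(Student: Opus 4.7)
The plan is to reduce the multinomial concentration claim to a standard Chernoff bound on binomial tails, then union bound over the $r$ bins. I would begin by observing that for each fixed $k\in\{1,\ldots,r\}$, the count $|\chi_k|=\sum_{\ell=1}^n\mathbf{1}\{X_\ell=k\}$ is a sum of i.i.d.\ Bernoulli$(1/r)$ random variables, so $|\chi_k|\sim\text{Bin}(n,1/r)$ with mean $\mu:=n/r$.

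Next I would apply the multiplicative Chernoff inequalities to both tails. Taking $\delta=1/2$ for the lower tail and $\delta=1$ for the upper tail gives
\begin{align*}
\PP(|\chi_k|\leq n/(2r)) &\leq \exp(-\mu/8) = \exp(-n/(8r)),\\
\PP(|\chi_k|\geq 2n/r) &\leq \exp(-\mu/3) \leq \exp(-n/(8r)),
\end{align*}
so the probability that the $k$-th bin falls outside the target range is at most $2\exp(-n/(8r))$. A union bound over $k=1,\ldots,r$ then yields
\begin{align*}
\PP\bigl(\exists k: |\chi_k|\notin [n/(2r),2n/r]\bigr)\leq 2r\exp(-n/(8r)),
\end{align*}
which is precisely the complementary statement of the lemma.

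There is essentially no obstacle here; the only mild care needed is to pick the multiplicative deviations $\delta$ so that both the one-sided tail bounds simultaneously yield the common exponent $n/(8r)$, which the choices $\delta=1/2$ (lower) and $\delta=1$ (upper) accomplish. If one prefers to avoid invoking the multiplicative Chernoff inequality as a black box, one can equivalently apply Bernstein's inequality to the centered variables $\mathbf{1}\{X_\ell=k\}-1/r$ (each bounded by $1$ with variance $(1-1/r)/r\leq 1/r$), which after choosing the deviation $t=n/(2r)$ yields the same exponential rate up to constants. Either way, the proof is two lines plus a union bound.
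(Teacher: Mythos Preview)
Your proposal is correct and follows essentially the same route as the paper: both arguments recognize that $|\chi_k|\sim\mathrm{Bin}(n,1/r)$, apply the multiplicative Chernoff bound to each tail (obtaining $e^{-n/(3r)}$ for the upper tail and $e^{-n/(8r)}$ for the lower tail), and then union bound over the $r$ bins. The paper is slightly terser, but the structure and the constants match.
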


\begin{proof}
We start with the proof of the upper bound. Note that by Chernoff's bound
\begin{align*}
P( \sup_{k=1,...,r} |\chi_k| \geq 2n/r )&\leq \sum_{k=1}^r P(|\chi_k| \geq 2n/r )
\leq r  e^{-n/(3r) },
\end{align*}
and similarly for the lower bound
\begin{align*}
P( \inf_{k=1,...,r} |\chi_k| \leq 2^{-1}n/r )&\leq r e^{-n/(8r)}.
\end{align*}
\end{proof}

Given a finite set $\FF_0\subset \mathcal{F}$, let use introduce the notations
\begin{align*}
N_{t}^{\max}= \max_{f\in\mathcal{F}_0}\big| \{ \tilde{f}\in\mathcal{F}_0:\, d(f,\tilde{f})\leq t\} \big|,\\
N_{t}^{\min}= \min_{f\in\mathcal{F}_0}\big|\{ \tilde{f}\in\mathcal{F}_0:\, d(f,\tilde{f})\leq t\}\big|.
\end{align*}
 The following theorem is a slight extension of Corollary 1 of \cite{duchi:wainwright:2013}, see also Theorem \ref{lem: Fano:Wainwright:suppl} with the corresponding proof in the supplement. 

\begin{theorem}\label{lem: Fano:Wainwright}
If $\mathcal{F}$ contains a finite set $\mathcal{F}_0$ { 
 and} $|\mathcal{F}_0|-N_{t}^{\min}>N_t^{\max}$, {then} for all $p, t > 0$,
\begin{align*}
\inf_{\hat f \in \EEE(Y)} \sup_{f \in \FF}
\mathbb{E}_{f} d^p(\hat f, f) \ge t^p 
\Big(1-\frac{I(F;Y)+\log 2}{\log (|\mathcal{F}_0|/N_{t}^{\max})}\Big),
\end{align*} 
where $\EEE(Y)$ denotes the set of measurable functions of $Y$, $I(F;Y)$ is the mutual information between the uniform random variable $F$ (on $\mathcal{F}_0$) and $Y$, 
in the Markov chain $F\to X\to Y$, and $\mathbb{E}_f$ is the expectation with respect to the distribution of $Y$ given $F=f$.
 \end{theorem}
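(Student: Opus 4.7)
The plan is to reduce this minimax-risk lower bound to a Fano-type testing bound via averaging and Markov's inequality. First, I would pass from the supremum over $\FF$ to a uniform average over $\FF_0$: since $F$ is uniform on $\FF_0$,
\begin{equation*}
\sup_{f \in \FF} \EE_f d^p(\hat f, f) \;\geq\; \frac{1}{|\FF_0|}\sum_{f \in \FF_0}\EE_f d^p(\hat f, f) \;=\; \EE\, d^p(\hat f, F) \;\geq\; t^p\,\PP\big(d(\hat f, F) \geq t\big),
\end{equation*}
where the last step is Markov's inequality. It therefore suffices to establish the tail bound $\PP(d(\hat f, F) \geq t) \geq 1 - (I(F;Y)+\log 2)/\log(|\FF_0|/N_t^{\max})$.

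Second, I would turn $\hat f$ into a $\FF_0$-valued (possibly randomized) decoder $\psi(Y)$ suited for Fano's inequality. The natural construction is to draw $\psi(Y)$ uniformly from $B(\hat f(Y),t)\cap \FF_0$ when that set is non-empty (and arbitrarily otherwise). On the event $\{d(\hat f,F)<t\}$, the truth $F$ lies in the candidate set, so $\psi(Y)=F$ with conditional probability at least $1/|B(\hat f(Y),t)\cap \FF_0|$. The cardinality of this random set must then be controlled by $N_t^{\max}$ after reducing to centres inside $\FF_0$; this is the point where the hypothesis $|\FF_0|-N_t^{\min}>N_t^{\max}$ is used, because it guarantees that for any $\hat f$ a surrogate centre in $\FF_0$ can be chosen so that the ball at this centre dominates the candidate set.

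Third, I would apply the standard Fano inequality to the Markov chain $F\to Y\to\psi$. For $F$ uniform on $\FF_0$ this gives $\PP(\psi\neq F)\geq 1-(I(F;\psi)+\log 2)/\log|\FF_0|$; coarsening the problem to equivalence classes of size at most $N_t^{\max}$ (so that there are at least $|\FF_0|/N_t^{\max}$ effective hypotheses) sharpens the denominator to $\log(|\FF_0|/N_t^{\max})$. Composing this with the implication $\{d(\hat f, F)<t\}\subset\{\psi \text{ lies in the class of }F\}$ and the data-processing inequality $I(F;\psi)\leq I(F;Y)$ then delivers the claimed bound.

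The main obstacle is the class-construction step: the $t$-balls in $\FF_0$ overlap, so a literal partition into $t$-balls is not available, and it is precisely the condition $|\FF_0|-N_t^{\min}>N_t^{\max}$ that allows a valid coarsening into at least $|\FF_0|/N_t^{\max}$ cells with the property that $d(\hat f,F)<t$ forces correct cell-identification. Once this combinatorial step is in place, the remainder is a routine tail and information-theoretic calculation patterned on Corollary~1 of \cite{duchi:wainwright:2013}.
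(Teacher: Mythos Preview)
Your first reduction step (average over $\FF_0$, then Markov) is correct and matches the paper exactly. The gap is in everything that follows.

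The decoder-plus-coarsening route you outline does not go through as written. First, the cardinality $|B(\hat f(Y),t)\cap\FF_0|$ is \emph{not} controlled by $N_t^{\max}$, since $N_t^{\max}$ counts only balls centred at elements of $\FF_0$; your ``surrogate centre'' claim is not implied by the hypothesis $|\FF_0|-N_t^{\min}>N_t^{\max}$, which is a pure cardinality condition with no geometric content of that kind. Second, the ``coarsening into equivalence classes'' step you flag as the main obstacle is indeed an obstacle you do not resolve: overlapping $t$-balls cannot be partitioned, and applying standard Fano to a quotient would require a genuine partition with the property that $d(\hat f,F)<t$ identifies the cell---you assert the hypothesis permits this but give no construction, and in fact no such construction is used in the actual argument.

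The paper's proof avoids any decoder or partition and works purely with entropy. It lower-bounds $H(F\mid\hat f)$ by $\log|\FF_0|-I(F;Y)$ via the data-processing inequality $I(F;\hat f)\le I(F;Y)$ and the identity $I(F;\hat f)=H(F)-H(F\mid\hat f)$ with $H(F)=\log|\FF_0|$. It then upper-bounds $H(F\mid\hat f)$ by invoking Proposition~1 of \cite{duchi:wainwright:2013}, which gives
\[
H(F\mid\hat f)\le H(\PP_T)+\PP_T\log\frac{|\FF_0|-N_t^{\min}}{N_t^{\max}}+\log N_t^{\max},
\]
where $\PP_T=P_{(F,Y)}(d(\hat f,F)>t)$. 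The hypothesis $|\FF_0|-N_t^{\min}>N_t^{\max}$ is precisely the assumption needed for that proposition (it makes the logarithm positive so the inequality can be solved for $\PP_T$). Combining the two bounds on $H(F\mid\hat f)$, using $H(\PP_T)\le\log 2$, and rearranging yields the tail bound directly. There is no testing reduction and no class construction.
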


\subsection{Entropy of a finite binary string}\label{sec: Shannon}
In the proof of Theorem \ref{theorem: minimaxL2LB} we need to bound the 
entropy of transmitted finite binary string $Y^{(i)}$. Since we 
{do not want to restrict ourself only to prefix codes}, 
we can not use a standard version of Shannon's source coding 
theorem for this purpose. Instead we use the following result.

\begin{lemma}\label{lem: Shannon}
Let $Y$ be a random finite binary string. 
Its expected length  satisfies the inequality
\begin{align*}
H(Y)\leq 2 \EE l(Y)+1.
\end{align*}
\end{lemma}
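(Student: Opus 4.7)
The plan is to split the entropy by conditioning on the length, using the chain rule. Since $l(Y)$ is a deterministic function of $Y$, we have
\[
H(Y)=H(l(Y))+H(Y\mid l(Y)).
\]
Conditional on $l(Y)=k$, the random string $Y$ is supported on the $2^k$-element set $\{0,1\}^k$, so $H(Y\mid l(Y)=k)\leq \log_2 2^k=k$, and averaging over $l(Y)$ gives $H(Y\mid l(Y))\leq \EE l(Y)$. It therefore suffices to prove the auxiliary claim: for any non-negative integer-valued random variable $L$ with mean $\mu=\EE L$,
\[
H(L)\leq \mu+1.
\]

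To establish this auxiliary claim I would invoke the standard maximum-entropy argument: setting $g_k=(1-q)q^k$ with $q=\mu/(\mu+1)$ (the geometric distribution on $\{0,1,2,\ldots\}$ with mean $\mu$), nonnegativity of the Kullback--Leibler divergence $D(P_L\Vert g)\geq 0$ together with the fact that $\log_2 g_k$ is affine in $k$ yields
\[
H(L)\leq -\log_2(1-q)-\mu\log_2 q = (\mu+1)\log_2(\mu+1)-\mu\log_2\mu.
\]
A brief calculus check then reduces everything to verifying $f(\mu):=(\mu+1)\log_2(\mu+1)-\mu\log_2\mu-(\mu+1)\leq 0$ for all $\mu\geq 0$. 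This follows because $f'(\mu)=\log_2\bigl((\mu+1)/\mu\bigr)-1$ vanishes uniquely at $\mu=1$, with $f(1)=2-2=0$, $f(0)=-1$, and $f(\mu)\to -\infty$ as $\mu\to\infty$, so $\mu=1$ is the global maximum and $f\leq 0$ throughout.

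Combining $H(Y\mid l(Y))\leq \EE l(Y)$ with $H(l(Y))\leq \EE l(Y)+1$ yields the claimed inequality $H(Y)\leq 2\EE l(Y)+1$. The main (and really only non-routine) step is the calculus verification that the geometric maximum-entropy value is dominated by $\mu+1$; the rest is the chain rule for entropy together with the trivial cardinality estimate for binary strings of fixed length.
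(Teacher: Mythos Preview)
Your proof is correct and follows the same overall architecture as the paper: both decompose $H(Y)$ via the length $N=l(Y)$ into $H(Y\mid N)+H(N)$ (the paper phrases this as nonnegativity of the KL divergence between $Y$ and a string that is uniform on $\{0,1\}^n$ given $N=n$, which unwinds to exactly the chain rule you use), bound $H(Y\mid N)\le \EE N$ by the cardinality of $\{0,1\}^n$, and then reduce to the auxiliary claim $H(N)\le \EE N+1$.

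The only genuine difference is in how that auxiliary claim is established. The paper gives a short elementary argument: split the sum defining $H(N)$ according to whether $\PP(N=i)\ge e^{-i}$, use $\log(1/p)\le i$ on the first set and monotonicity of $x\mapsto x\log(1/x)$ on the second to get $H(N)\le \sum_{i} i\,\PP(N=i)+\sum_i i e^{-i}\le \EE N+1$. Your route is the standard maximum-entropy argument via the geometric distribution plus a one-variable calculus check. Both are clean; the paper's version avoids calculus entirely and is slightly shorter, while yours identifies the extremal distribution and in fact yields the sharp constant $(\mu+1)\log(\mu+1)-\mu\log\mu$, of which $\mu+1$ is an upper bound. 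One small remark: you compute in base~2 throughout whereas the paper works in natural logarithms; since entropy in bits dominates entropy in nats this only makes your inequality stronger, but it is worth stating the convention once.
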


\begin{proof}
Let $N = l(Y)$ and consider a random binary string  $U$ 
with distribution $U\big|N =n\sim Unif( \{0,1\}^{n})$. 
 Then for $S$ the collection of finite binary strings, 
\begin{align*}
K(Y,U) & = \sum_{s \in S} \PP(Y = s) \log\frac{\PP(Y = s)}{\PP(U = s)} \\
& = \sum_{s \in S} \PP(Y = s) \log\frac{1}{\PP(U = s)} -H(Y)\\
& = \sum_n \sum_{s \in \{0,1\}^n} \PP(Y = s) \log\frac{1}{\PP(U = s)} - H(Y).
\end{align*}
Now for every $n$ {and $s\in\{0,1\}^n$, we have $\PP(U = s) =\PP(U = s\given N=n)\PP(N=n) = 
2^{-n}\PP(N=n)$}. It follows that 
\[
\sum_{s \in \{0,1\}^n} \PP(Y = s) \log\frac{1}{\PP(U = s)} {=}  \PP(N=n)
\log\frac{2^n}{\PP(N=n)}. 
\]
Hence, 
\[
K(Y, U) \le (\log 2)  \EE N + H(N) - H(Y)
\]
The non-negativity of the KL-divergence thus implies that 
$H(Y)\le  \EE N +H(N)$. 

To complete the proof 
 we show that $H(N)\leq \EE N+1$. To do so 
 consider the index set $I=\{i:\, \PP(N=i)\geq e^{-i}\}$ and note that the function $x\mapsto x\log(1/x)$ is monotone increasing for $x\leq e^{-1}$ . Then
\begin{align*}
H(N)&= \sum_{i\in I} \PP(N=i)\log \frac{1}{\PP(N=i)}+ \sum_{i\in I^c} \PP(N=i)\log \frac{1}{\PP(N=i)}\\
&\leq \sum_{i\in I}  \PP(N=i) i+ \sum_{i\in I^c} e^{-i}i \le \EE N + 1.
\end{align*}
This completes the proof.
\end{proof}

\bibliographystyle{acm}
\bibliography{references}
\appendix

\section{Information theoretic results}\label{sec: info}

\subsection{Basic definitions and results}\label{sec: inf:theory:facts}

A classical reference for basic concepts and results from information theory is the second chapter of \cite{cover:2012}. 
Statements  are only proved for discrete variables in 
Chapter 2 of \cite{cover:2012}, but several are valid more generally and are used more generally
in this paper. 
All logarithms are base $e$ here.

In this section, where we recall notations and basic results, 
$(X, Y, Z)$ is a random triplet in a space $\XX\times\YY\times \mathcal{Z}$ that 
is nice enough, so that regular versions of all conditional distributions exist (for instance
a Polish space). We denote the joint distribution by $P_{(X, Y, Z)}$, 
the (regular version of the) conditional distribution of $X$ given $Y = y$ 
by $P_{X\given Y=y}$, etcetera.
If $X$ and $Y$ are discrete we denote by $p_{(X,Y)}$ the  joint  probability mass function (pmf) 
of $(X,Y)$, by $p_X$ and $p_Y$ the marginal pmf of $X$ and $Y$, and by $p_{X\given Y=y}(x)
=p_{(X,Y)}(x,y)/p_Y(y)$ the conditional pmf of $X$ given $Y=y$. 
For probability measures $P$ and $Q$ on the same 
space we define the Kullback-Leibler 
divergence as usual as $\K(P,Q) = \int \log ({dP}/{dQ})\,dP$
if $P \ll Q$, and as $+\infty$ if not. 

The {\em mutual information} between $X$ and $Y$ and the {\em conditional mutual information} 
between $X$ and $Y$, given $Z$, are  defined as 
\begin{align*}
I(X; Y) & = \K(P_{(X,Y)}, P_X \times P_Y),\\
I(X; Y \given Z= z) & = \K(P_{(X,Y) \given Z= z}, P_{X\given Z= z} \times P_{Y\given Z= z}),\\
I(X; Y \given Z) & = \int I(X; Y \given Z= z)\,dP_Z(z).
\end{align*}
The (conditional) mutual information is nonnegative and symmetric in $X$ and $Y$. 

If $X$ and $Y$ are discrete we define the {\em entropy} of $X$ and the {\em 
conditional entropy of $X$ given $Y$} by 
\begin{align*}
H(X) & = -\sum_x p_X(x)\log p_X(x),\\
H(X \given Y) & = - \sum_{x,y}  p_{(X,Y)}(x,y)\log p_{X \given Y=y}(x).
\end{align*}
Entropy and conditional entropy are nonnegative. For discrete $X$ and $Y$, 
it holds that $I(X; Y) = H(X) - H(X\given Y)$.  Hence, since  mutual information 
is nonnegative, $H(X \given Y) \le H(X)$ (conditioning reduces entropy). 
If $X$ is a discrete variable on a finite set $\XX$ then $H(X) \le \log |\XX|$, 
with equality if $X$ is uniformly distributed on $\XX$. 
We denote by $H(p) = -p\log p -(1-p)\log(1-p)$ the entropy 
of a Bernoulli variable with parameter $p \in (0,1)$. The function
$p \mapsto H(p)$ is a concave function that is symmetric around $p=1/2$. 
Its  maximum value, attained  at $p=1/2$, equals $\log 2$.

We now recall a number of basic  identities for mutual information.  
First of all,  we have the following rule for a general, not necessarily discrete random triplet.

\begin{prop}[Chain rule for mutual information]\label{prop: chain}
We have 
\[
I(X; (Y, Z)) = I(X; Y \given Z) + I(X; Z).
\]
\end{prop}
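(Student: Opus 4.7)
The plan is to establish the identity by disintegrating the joint laws along $Z$ and exploiting a chain rule at the level of Radon--Nikodym derivatives. From the definitions,
\[
I(X;(Y,Z))=K(P_{(X,Y,Z)},P_X\times P_{(Y,Z)}),\qquad I(X;Z)=K(P_{(X,Z)},P_X\times P_Z),
\]
and $I(X;Y\given Z)=\int K(P_{(X,Y)\given Z=z},P_{X\given Z=z}\times P_{Y\given Z=z})\,dP_Z(z)$. Since the underlying spaces are Polish, regular conditional distributions exist, so the disintegrations $P_{(X,Y,Z)}(dx,dy,dz)=P_{(X,Y)\given Z=z}(dx,dy)\,P_Z(dz)$ and $(P_X\times P_{(Y,Z)})(dx,dy,dz)=P_X(dx)\,P_{Y\given Z=z}(dy)\,P_Z(dz)$ are meaningful. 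If $P_{(X,Y,Z)}\not\ll P_X\times P_{(Y,Z)}$, then $I(X;(Y,Z))=+\infty$ and a short separate check shows that at least one of the two terms on the right is also $+\infty$; so I may assume absolute continuity throughout.

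The crucial step is a two-stage factorization of the Radon--Nikodym derivative:
\[
\frac{dP_{(X,Y,Z)}}{d(P_X\times P_{(Y,Z)})}(x,y,z)=\frac{dP_{(X,Y)\given Z=z}}{d(P_{X\given Z=z}\times P_{Y\given Z=z})}(x,y)\cdot\frac{dP_{X\given Z=z}}{dP_X}(x),
\]
valid for $P_Z$-almost every $z$. The first equality is read off from the two disintegrations above, which identify the global density with the density of $P_{(X,Y)\given Z=z}$ with respect to $P_X\times P_{Y\given Z=z}$; the second is obtained by inserting $P_{X\given Z=z}\times P_{Y\given Z=z}$ as an intermediate reference measure and using $P_{X\given Z=z}\ll P_X$ (which holds $P_Z$-a.e.\ under the standing absolute continuity assumption).

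Taking logarithms and integrating against $P_{(X,Y,Z)}$ then expresses $I(X;(Y,Z))$ as a sum of two pieces. The first equals $\int K(P_{(X,Y)\given Z=z},P_{X\given Z=z}\times P_{Y\given Z=z})\,dP_Z(z)=I(X;Y\given Z)$ by Fubini and the definition of conditional mutual information. The second has integrand depending only on $(x,z)$, so marginalizing out $y$ reduces it to $\int\log(dP_{X\given Z=z}/dP_X)(x)\,dP_{(X,Z)}(x,z)$, which equals $K(P_{(X,Z)},P_X\times P_Z)=I(X;Z)$ since $dP_{(X,Z)}/d(P_X\times P_Z)(x,z)=(dP_{X\given Z=z}/dP_X)(x)$ by another disintegration. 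Adding the two pieces yields the claim. I expect the main obstacle to be purely measure-theoretic bookkeeping: verifying that the density factorization holds on a $P_Z$-full set, and handling the cases in which some of the three quantities are infinite so that the identity is unambiguous in $[0,\infty]$.
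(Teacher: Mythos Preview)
Your argument is correct: the factorization of the Radon--Nikodym derivative via the intermediate reference measure $P_{X\given Z=z}\times P_{Y\given Z=z}$, followed by integration and Fubini, is the standard measure-theoretic route to the chain rule in the general (non-discrete) setting, and your handling of the $+\infty$ cases is appropriate.

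The paper, however, does not prove this proposition at all. It is stated in the appendix as one of several ``basic identities for mutual information'' that are simply recalled, with a reference to Chapter~2 of Cover and Thomas for the discrete case and a remark that the results hold more generally. So there is nothing to compare your proof against: you have supplied a full proof where the paper gives none.
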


%
%
%

We call the triplet $(X, Y, Z)$ a {\em Markov chain}, and write $X \to Y \to Z$,
if the joint distribution disintegrates as 
\[
dP_{(X,Y,Z)}(x,y,z) = dP_X(x)dP_{Y \given X=x}(y)dP_{Z\given Y=y}(z). 
\]
In this situation we have the following result, which relates the information 
in the different links of the chain. Again, discreteness of the variables
is not necessary for this result.

\begin{prop}[Data-processing inequality]\label{prop: dataproc}
If $X \to Y \to Z$ is a Markov chain, then 
\[
I(X; Y) = I(X;Y \given Z) + I(X; Z).
\]
In particular 
\[
I(X; Z) \le I(X; Y). 
\]
\end{prop}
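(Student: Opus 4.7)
The plan is to deduce the identity from the chain rule for mutual information (Proposition~\ref{prop: chain}) applied in two different orderings of the pair $(Y,Z)$, together with the fact that the Markov property forces one of the conditional mutual informations to vanish.

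Concretely, I would first apply Proposition~\ref{prop: chain} twice to expand $I(X;(Y,Z))$ in the two possible ways:
\begin{align*}
I(X;(Y,Z)) &= I(X;Y\given Z) + I(X;Z),\\
I(X;(Y,Z)) &= I(X;Z\given Y) + I(X;Y).
\end{align*}
Equating the two right-hand sides gives
\begin{equation*}
I(X;Y\given Z) + I(X;Z) = I(X;Z\given Y) + I(X;Y).
\end{equation*}
The claimed identity $I(X;Y) = I(X;Y\given Z) + I(X;Z)$ therefore reduces to showing that $I(X;Z\given Y)=0$.

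The key step, and the main (mild) technical point, is precisely this vanishing, which captures the Markov assumption. By definition of a Markov chain, the joint law factors as $dP_{(X,Y,Z)}(x,y,z) = dP_X(x)\,dP_{Y\given X=x}(y)\,dP_{Z\given Y=y}(z)$, so $P_{Z\given Y=y}$ does not depend on $x$, i.e. $P_{Z\given Y=y,X=x} = P_{Z\given Y=y}$ for $P_{(X,Y)}$-a.e. $(x,y)$. Consequently, conditionally on $Y=y$, the variables $X$ and $Z$ are independent, so $P_{(X,Z)\given Y=y} = P_{X\given Y=y}\times P_{Z\given Y=y}$, and hence $I(X;Z\given Y=y) = K(P_{(X,Z)\given Y=y},\,P_{X\given Y=y}\times P_{Z\given Y=y}) = 0$ for $P_Y$-a.e. $y$. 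Integrating in $y$ gives $I(X;Z\given Y)=0$, which is exactly what is needed.

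Plugging this into the equality above yields the first claim $I(X;Y) = I(X;Y\given Z) + I(X;Z)$. The second claim $I(X;Z)\le I(X;Y)$ then follows immediately from the nonnegativity of conditional mutual information, which is a standard property already noted in Section~\ref{sec: inf:theory:facts}. I expect no serious obstacle: the only place requiring care is the measure-theoretic justification that the Markov factorisation implies the conditional-independence identity between regular conditional laws, which is routine in the Polish-space setting assumed at the start of the section.
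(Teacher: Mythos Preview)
Your argument is correct and is the standard derivation: apply the chain rule in both orderings and use the Markov property to kill $I(X;Z\given Y)$, then invoke nonnegativity of conditional mutual information. The paper itself does not supply a proof of this proposition---it is listed among the basic information-theoretic facts in Section~\ref{sec: inf:theory:facts} with a reference to \cite{cover:2012}---so there is nothing to compare against beyond noting that your route is exactly the textbook one.
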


%

In case of independence, mutual information is sub-additive in the following sense.

\begin{prop}[Role of independence]\label{prop: ind}
If $Y$ and $Z$ are conditionally independent given $X$, then
\begin{align*}
I(X; (Y,Z))  = I(X; Y)  + I(X; Z) - I(Y;Z) \le I(X; Y)  + I(X; Z).
\end{align*}
\end{prop}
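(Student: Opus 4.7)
The plan is to derive the equality by two applications of the chain rule for mutual information (Proposition \ref{prop: chain}), combined with the observation that conditional independence annihilates one of the resulting terms; the inequality will then follow immediately from nonnegativity of mutual information.

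First I would expand $I(Y;(X,Z))$ via the chain rule in two different orderings, obtaining
\begin{align*}
I(Y;(X,Z)) &= I(Y;Z\mid X) + I(Y;X),\\
I(Y;(X,Z)) &= I(Y;X\mid Z) + I(Y;Z).
\end{align*}
The hypothesis $Y \perp Z \mid X$ means that for $P_X$-a.e.\ $x$ the regular conditional law $P_{(Y,Z)\mid X=x}$ coincides with the product $P_{Y\mid X=x}\times P_{Z\mid X=x}$, so the KL divergence appearing inside the definition of $I(Y;Z\mid X)$ vanishes $P_X$-a.s., giving $I(Y;Z\mid X)=0$. Equating the two expansions yields the identity $I(Y;X\mid Z) = I(Y;X) - I(Y;Z)$. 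Applying the chain rule once more to $I(X;(Y,Z))$ and invoking the symmetry of (conditional) mutual information (noted in Section \ref{sec: inf:theory:facts}), I would then substitute this identity to conclude
\[
I(X;(Y,Z)) = I(X;Y\mid Z) + I(X;Z) = I(X;Y) + I(X;Z) - I(Y;Z),
\]
which is the claimed equality; the inequality is immediate from $I(Y;Z)\geq 0$.

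There is no real obstacle here — the argument is short bookkeeping with the chain rule — the only point requiring any care is that Proposition \ref{prop: chain} and the symmetry of (conditional) mutual information apply without any discreteness assumption on the variables, which is exactly the setting of Section \ref{sec: inf:theory:facts}.
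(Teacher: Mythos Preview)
Your proof is correct and is the standard chain-rule argument; the paper itself does not supply a proof of this proposition, merely stating it as a recalled fact in Section \ref{sec: inf:theory:facts}. There is nothing to compare beyond noting that your derivation is exactly what one would expect.
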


%
%

Finally we recall  Fano's inequality \cite{fano1961transmission} which we  use in the following form
in this paper.

\begin{prop}[Fano's lemma]
Let $X \to Y \to \hat X$ be a Markov chain, where $X$ and  $\hat X$
are random elements in a finite set $\XX$ and 
$X$ has a uniform distribution on  $\XX$. Then 
\[
\PP(X \not = \hat X) \ge 1 - \frac{\log 2 + I(X;Y)}{\log|\XX|}.
\]
\end{prop}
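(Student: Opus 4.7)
The plan is to first prove the standard form of Fano's inequality, then specialize using the uniformity assumption on $X$, and finally apply the data-processing inequality from Proposition~\ref{prop: dataproc} to replace $I(X;\hat X)$ by $I(X;Y)$.

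I would start by introducing the indicator $E = \mathbf{1}\{X \neq \hat X\}$ with $\PP(E=1) = P_e := \PP(X \neq \hat X)$, and expanding $H(E, X \given \hat X)$ in two ways via the chain rule for entropy. Since $E$ is a deterministic function of $(X, \hat X)$, one expansion gives $H(E, X \given \hat X) = H(X \given \hat X) + H(E \given X, \hat X) = H(X \given \hat X)$, while the other gives $H(E, X \given \hat X) = H(E \given \hat X) + H(X \given E, \hat X)$. The first term on the right is bounded by $H(E) = H(P_e) \le \log 2$, and for the second term I would condition on the value of $E$: conditionally on $E=0$ the variable $X$ equals $\hat X$ and contributes zero entropy, while conditionally on $E=1$ the variable $X$ takes values in $\XX \setminus \{\hat X\}$, so its conditional entropy is at most $\log(|\XX|-1) \le \log|\XX|$. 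This yields the standard Fano bound $H(X \given \hat X) \le \log 2 + P_e \log |\XX|$.

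Next, since $X$ is uniform on $\XX$ we have $H(X) = \log |\XX|$, so using $I(X;\hat X) = H(X) - H(X \given \hat X)$ and rearranging the previous inequality gives
\[
P_e \ge 1 - \frac{I(X;\hat X) + \log 2}{\log |\XX|}.
\]
Finally, the Markov property $X \to Y \to \hat X$ combined with Proposition~\ref{prop: dataproc} gives $I(X;\hat X) \le I(X;Y)$, and substituting into the above lower bound produces the claimed inequality.

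There is no real obstacle here: the only points that merit care are the justification that $H(E \given X, \hat X) = 0$ (because $E$ is a measurable function of $(X, \hat X)$) and the slightly wasteful step $\log(|\XX|-1) \le \log|\XX|$, which is harmless for the stated form of the bound. Everything else is bookkeeping with the chain rule and a single invocation of the previously established data-processing inequality.
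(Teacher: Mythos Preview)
Your proof is correct and is exactly the standard textbook argument for Fano's inequality (as in Cover and Thomas). Note that the paper does not actually prove this proposition; it merely recalls it from \cite{fano1961transmission} as background, so there is nothing to compare against beyond observing that your derivation is the canonical one.
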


%
%

\subsection{Lower bounds for estimators using  processed data}

In this section we consider a situation in which we have a random 
element $X$ in $\XX$ with a distribution $\PP_f$ depending on a parameter $f$ in a
semimetric space $(\FF, d)$. Let us consider any subset $\FF_0\subset \FF$ and take $F$ to be an 
uniform distribution on $\FF_0$. Moreover,  we assume that we have a 
Markov chain $X \to Y$ defined through a Markov transition kernel 
$Q(dy\given x)$ from $\XX$
to some space $\YY$. 
Note that this includes the case that $Y$ is simply a measurable function $Y = \psi(X)$ of 
the full data $X$.
We view $Y$ as a transformed, or processed version of the full data $X$. This forms a Markov chain
\begin{align}
F\to X\to Y. \label{def: Markov:chain}
\end{align}
Furthermore, let us denote by $I(F;Y)$ the mutual information between $F$ and $Y$ 
in the above Markov chain \eqref{def: Markov:chain}.

We are interested in lower bounds for estimators $\hat f$ for $f$ that 
are only based on the processes data. The collection of all such estimators, 
i.e.\ measurable functions of $Y$, is denoted by $\EEE(Y)$.


The usual approach of relating lower bounds for estimation
to lower bounds for testing multiple hypotheses, in combination
with Fano's lemma, gives the following useful result in our setting.

\begin{theorem}\label{thm: fano}
\label{theorem: m1}
If $\FF$ contains a finite set $\FF_0$ of  functions that are $2\delta$-separated 
for the semimetric $d$, then for all $p > 0$
\[
\inf_{\hat f \in \EEE(Y)} \sup_{f \in \FF}
\mathbb{E}_{f,Q} d^p(\hat f, f) \ge \delta^p 
\Big(1 - \frac{\log 2 + I(F;Y)}{\log |\FF_0|}\Big).
\]

\end{theorem}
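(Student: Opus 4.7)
The plan is to reduce the estimation problem to a multiple hypothesis testing problem on the finite set $\mathcal{F}_0$, and then apply Fano's lemma (Proposition recalled in the preceding subsection) to the induced Markov chain.

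First I would pass from the supremum to the average over $\mathcal{F}_0$. For any estimator $\hat f \in \EEE(Y)$, taking the supremum over $f \in \mathcal{F}$ dominates the uniform average over $\mathcal{F}_0$, so
\[
\sup_{f \in \mathcal{F}} \mathbb{E}_{f,Q}\, d^p(\hat f, f) \;\ge\; \frac{1}{|\mathcal{F}_0|}\sum_{f \in \mathcal{F}_0} \mathbb{E}_{f,Q}\, d^p(\hat f, f) \;=\; \mathbb{E}\bigl[d^p(\hat f, F)\bigr],
\]
where $F$ is uniform on $\mathcal{F}_0$, the data $X$ is drawn from $\mathbb{P}_F$, and $Y$ is obtained from $X$ via the kernel $Q$.

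Next I would convert $\hat f$ into a test $\hat F$ for $F$ via minimum-distance decoding: set $\hat F \in \arg\min_{g \in \mathcal{F}_0} d(\hat f, g)$. The $2\delta$-separation of $\mathcal{F}_0$ together with the triangle inequality yields that on the event $\{\hat F \ne F\}$ one has $2\delta \le d(F,\hat F) \le d(F,\hat f) + d(\hat f,\hat F) \le 2\, d(\hat f, F)$, i.e. $d(\hat f, F) \ge \delta$. Consequently, by Markov's inequality,
\[
\mathbb{E}\bigl[d^p(\hat f, F)\bigr] \;\ge\; \delta^p\, \mathbb{P}(d(\hat f, F) \ge \delta) \;\ge\; \delta^p\, \mathbb{P}(\hat F \ne F).
\]

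Finally I would apply Fano's lemma. Since $\hat f$ is a measurable function of $Y$, the random element $\hat F$ is also a function of $Y$, so $F \to Y \to \hat F$ is a Markov chain and $F$ is uniform on the finite set $\mathcal{F}_0$. Fano's lemma therefore gives
\[
\mathbb{P}(\hat F \ne F) \;\ge\; 1 - \frac{\log 2 + I(F;Y)}{\log |\mathcal{F}_0|},
\]
where we have used the data-processing inequality $I(F;\hat F) \le I(F;Y)$ implicitly through the form of Fano stated in the excerpt. Combining the three displays and taking the infimum over $\hat f \in \EEE(Y)$ produces the claimed bound. There is no genuine obstacle here: the argument is the standard Yang--Barron/Fano reduction, and the only point requiring mild care is checking that the triangle-inequality step works with a semimetric (which it does, since only the triangle inequality is used) and that the minimum-distance decoder $\hat F$ can indeed be chosen measurably, which is routine for a countable $\mathcal{F}_0$.
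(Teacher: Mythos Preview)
Your argument is correct and is exactly the standard reduction the paper alludes to; in fact the paper does not spell out a proof for this statement beyond remarking that it follows from ``the usual approach of relating lower bounds for estimation to lower bounds for testing multiple hypotheses, in combination with Fano's lemma.'' Your steps (averaging over $\FF_0$, minimum-distance decoding via the $2\delta$-separation, then Fano's lemma applied to the chain $F\to Y\to \hat F$) are precisely that usual approach, so nothing further is needed.
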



We also use a slight modification of this basic result, where the condition
that the functions in $\FF_0$ are separated is replaced by a condition on the 
minimum and maximum {number of} elements in $\FF_0$ that are contained in small balls.
Given a finite set $\FF_0\subset \mathcal{F}$, we use the notations
\begin{align*}
N_{t}^{\max}= \max_{f\in\mathcal{F}_0}\Big\{\#\{ \tilde{f}\in\mathcal{F}_0:\, d(f,\tilde{f})\leq t\} \Big\},\\
N_{t}^{\min}= \min_{f\in\mathcal{F}_0}\Big\{\#\{ \tilde{f}\in\mathcal{F}_0:\, d(f,\tilde{f})\leq t\} \Big\}.
\end{align*}
 The following theorem is a slight extension of Corollary 1 of \cite{duchi:wainwright:2013}. In the latter corollary it is implicitly assumed that $Y$ is a discrete random variable (the conditional entropy $H(F\given Y)$ is considered), while we can allow continuous random variables as well. For self-containedness we provide the proof below.

\begin{theorem}\label{lem: Fano:Wainwright:suppl}
If $\mathcal{F}$ contains a finite set $\mathcal{F}_0$ { 
 and} $|\mathcal{F}_0|-N_{t}^{\min}>N_t^{\max}$, {then} for all $p, t > 0$,
\begin{align*}
\inf_{\hat f \in \EEE(Y)} \sup_{f \in \FF}
\mathbb{E}_{f,Q} d^p(\hat f, f) \ge t^p 
\Big(1-\frac{I(F;Y)+\log 2}{\log (|\mathcal{F}_0|/N_{t}^{\max})}\Big).
\end{align*} 
\end{theorem}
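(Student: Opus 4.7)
The plan is to reduce the estimation lower bound to a list-decoding analogue of Fano's inequality. Since $F$ is uniform on $\mathcal{F}_0 \subset \mathcal{F}$, averaging the supremum and applying Markov's inequality gives
\begin{align*}
\sup_{f \in \mathcal{F}} \mathbb{E}_{f,Q}\, d^p(\hat{f}, f) \,\geq\, \mathbb{E}\, d^p(\hat{f}, F) \,\geq\, t^p\, \mathbb{P}\bigl(d(\hat{f}, F) > t\bigr),
\end{align*}
so it is enough to show that $\mathbb{P}(d(\hat{f}, F) > t) \geq 1 - (I(F;Y)+\log 2)/\log(|\mathcal{F}_0|/N_t^{\max})$.

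The core step is a list-decoding Fano argument. I would introduce the $Y$-measurable random set $S(Y) := \{f \in \mathcal{F}_0 : d(f, \hat{f}(Y)) \leq t\}$ and the indicator $Z := \mathbf{1}\{F \in S(Y)\}$, so that $\mathbb{P}(Z=1) = \mathbb{P}(d(\hat{f}, F) \leq t)$. Using the Markov chain $F \to Y$, the decomposition $H(F) \leq H(F|Y,Z) + H(Z) + I(F;Y)$, and the equality $H(F) = \log|\mathcal{F}_0|$, one gets
\begin{align*}
\log|\mathcal{F}_0| \,\leq\, H(F \mid Y, Z) + \log 2 + I(F; Y).
\end{align*}
On $\{Y=y, Z=1\}$ the variable $F$ is supported in $S(y)$ so $H(F|Y=y,Z=1) \leq \log|S(y)|$, while on $\{Y=y, Z=0\}$, $F$ lies in $\mathcal{F}_0 \setminus S(y)$, so $H(F|Y=y,Z=0) \leq \log|\mathcal{F}_0|$. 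Assuming the almost-sure bound $|S(Y)| \leq N_t^{\max}$, averaging and rearranging yields $\mathbb{P}(Z=1)\,\log(|\mathcal{F}_0|/N_t^{\max}) \leq \log 2 + I(F;Y)$, which is the desired list-decoding bound on $\mathbb{P}(F \in S(Y))$.

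The main obstacle is establishing the deterministic bound $|S(Y)| \leq N_t^{\max}$, which by the very definition of $N_t^{\max}$ is immediate when $\hat{f}(Y) \in \mathcal{F}_0$, but not for a general estimator $\hat f \in \EEE(Y)$ taking values in $\mathcal{F}$. I would dispose of this by a projection step, replacing $\hat f(Y)$ in the definition of $S(Y)$ by a measurable selection $\tilde f(Y) \in \arg\min_{g\in \mathcal{F}_0} d(g,\hat f(Y))$ and running the argument with $\tilde f$ in place of $\hat f$. The quantitative hypothesis $|\mathcal{F}_0| - N_t^{\min} > N_t^{\max}$ enters at this stage: it guarantees that $\log(|\mathcal{F}_0|/N_t^{\max}) > 0$ so the conclusion is non-vacuous, and it is the counting cushion that lets one transfer the event $\{d(\tilde f, F) \leq t\}$ back to $\{d(\hat f, F) \leq t\}$ while keeping the exact constant $t^p$ in the lower bound. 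Verifying the measurability of $\tilde f$ and the careful book-keeping of this reduction is the most delicate part of the argument; everything else is a routine application of the entropy chain above.
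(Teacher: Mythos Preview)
Your reduction to $\mathbb P(d(\hat f,F)>t)$ via Markov's inequality and the list--decoding entropy chain
\[
\log|\mathcal F_0|=H(F)\le H(Z)+H(F\mid Y,Z)+I(F;Y)
\]
are correct and mirror the paper's argument closely. The only structural difference is that the paper conditions on $\hat f$ rather than on $Y$: it first uses data processing to get $H(F\mid\hat f)\ge\log|\mathcal F_0|-I(F;Y)$, and then invokes Proposition~1 of Duchi--Wainwright for the matching upper bound
\[
H(F\mid\hat f)\le H(\mathbb P_T)+\mathbb P_T\log\frac{|\mathcal F_0|-N_t^{\min}}{N_t^{\max}}+\log N_t^{\max},
\qquad \mathbb P_T=\mathbb P(d(\hat f,F)>t).
\]
Notice that this bound uses the sharper $\log(|\mathcal F_0|-N_t^{\min})$ on the error event, whereas you only use the trivial $\log|\mathcal F_0|$; this is precisely where $N_t^{\min}$ and the hypothesis $|\mathcal F_0|-N_t^{\min}>N_t^{\max}$ enter in the paper's proof.

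The genuine gap in your proposal is the projection step. Replacing $\hat f$ by $\tilde f\in\arg\min_{g\in\mathcal F_0}d(g,\hat f)$ in the definition of $S$ does give $|S|\le N_t^{\max}$, but it simultaneously replaces the event $\{d(\hat f,F)\le t\}$ by $\{d(\tilde f,F)\le t\}$, and the triangle inequality only yields $\{d(\hat f,F)\le t\}\subset\{d(\tilde f,F)\le 2t\}$. Your claim that the hypothesis $|\mathcal F_0|-N_t^{\min}>N_t^{\max}$ provides a ``counting cushion'' to undo this doubling is not substantiated: as written, your entropy chain never uses $N_t^{\min}$ at all, so the role you assign the hypothesis is speculative. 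In the paper, that hypothesis is not a repair device for a projection; it is what makes $\log\bigl((|\mathcal F_0|-N_t^{\min})/N_t^{\max}\bigr)>0$ so that the Duchi--Wainwright inequality can be rearranged. You should either invoke that proposition directly for general $\hat f$, as the paper does, or supply an explicit argument for your transfer claim.
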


\begin{proof}
We have
\begin{align*}
\inf_{\hat f \in \EEE(Y)} \sup_{f \in \FF}
\mathbb{E}_{f,Q} d^p(\hat f, f) &\ge 
t^p \inf_{\hat f \in \EEE(Y)} \sup_{f \in \FF_0} \PP_{f,Q}(d(\hat f, f) \ge t)\\
&\geq t^p \inf_{\hat f \in \EEE(Y)} P_{(Y,F)}(d(\hat{f},F)>t),
\end{align*}
therefore it is sufficient to show that
\begin{align*}
\inf_{\hat f \in \EEE(Y)}P_{(Y,F)}(d(\hat{f},F)>t)\geq 1-\frac{I(F;Y)+\log 2}{\log (|\mathcal{F}_0|/N_t^{\max})}.
\end{align*}

By definition, 
\begin{align*}
I(F; Y) & = \int\log\frac{dP_{(F,Y)}}{d(P_F \times P_Y)}dP_{(F,Y)}.
\end{align*}
By disintegration, $dP_{(F,Y)}(f,y) = dP_{Y}(y)dP_{F \given Y=y}(f)$, 
so that by Fubini, 
\begin{equation}\label{eq: ii}
I(F; Y)  = \int\Big(\int\log\frac{dP_{F \given Y=y}}{dP_F}\,
 dP_{F \given Y=y}\Big)\,dP_{Y}(dy).
\end{equation}
Now $P_F$ is the uniform distribution on $\FF_0$. Hence, 
it has density $p(f) = 1/|\FF_0|$ w.r.t.\ the counting measure 
$df$ on $\FF_0$. Define $p(f \given y) = P_{F\given Y=y}(\{f\})$, 
so that $P_{F\given Y=y}$ has density $p(f \given y)$ w.r.t.\ $df$. 
The KL-divergence $\K(P_{F\given Y = y}, P_F)$ 
in the inner integral can then be written as 
\[
\int p(f \given y)\log\frac{p(f\given y)}{p(f)}\,df.
\]
Similarly $\hat{f}$ has some density $\hat{p}(\hat{f})$ w.r.t.\ the counting measure 
$d\hat{f}$ on $\FF_0$ and we define $\hat{p}(\hat{f}|y)=P_{\hat{F}|Y=y}(\{\hat{f}\})$.

Next note that by the data-processing inequality, see Proposition \ref{prop: dataproc}, 
we have $ I(F;Y)-I(F;\hat{f})\geq0$. Then
\begin{align*}
0\leq I(F;Y)-I(F;\hat{f})&=  \int\int p(f|y)\log p(f|y)df\,dP_{Y}(dy)-\int p(f)\log p(f)df\\
&\qquad- \Big(\int\int p(f,\hat{f})\log p(f|\hat{f})dfd\hat{f}-\int p(f)\log p(f)df\Big)\\
&=  \int\int p(f|y)\log p(f|y)df\,dP_{Y}(dy){+}H(F|\hat{f}).
 \end{align*}
Next note that since $F$ is uniform on $\FF_0$ (see Section \ref{sec: inf:theory:facts}), 
\begin{align*}
\int\int p(f,y)\log p(f|y)dfdy=I(F;Y){-}H(F){=} I(F;Y)-\log (|\mathcal{F}_0|).
\end{align*}
We can summarize the above results as
\begin{align}
H(F|\hat{f}){\geq  \log (|\mathcal{F}_0|) - I(F;Y)}.\label{eq: UB:entorpy:estimate}
\end{align}

Since by assumption the conditions of Proposition 1 of \cite{duchi:wainwright:2013} hold, we have
\begin{align*}
H(F|\hat{f})\leq H(\mathbb{P}_T)+\mathbb{P}_T{\log} \frac{|\mathcal{F}_0|-N_t^{\min}}{N_t^{\max}} +\log N_t^{\max},
\end{align*}
where $\mathbb{P}_T=P_{(F,Y)}(d(\hat{f},F)>t)$. Then noting that $\log \mathbb{P}_T\leq \log 2$ and by combining the preceding display with $\eqref{eq: UB:entorpy:estimate}$ we get that
\begin{align*}
{\log (|\mathcal{F}_0|)-I(F;Y)}\leq \log 2+ \mathbb{P}_T {\log}\frac{|\mathcal{F}_0|-N_t^{\min}}{N_t^{\max}} +\log N_t^{\max}.
\end{align*}
Reformulation of the inequality yields
\begin{align*}
P_{(Y,F)}(d(\hat{f},F)>t)&\geq \frac{\log (|\mathcal{F}_0|/N_t^{\max})}{\log\big((|\mathcal{F}_0|-N_t^{\min})/N_t^{\max}\big)}-\frac{I(F;Y)+\log 2}{\log\big((|\mathcal{F}_0|-N_t^{\min})/N_t^{\max}\big)}\\
&\geq 1-\frac{I(F;Y)+\log 2}{\log (|\mathcal{F}_0|/N_t^{\max})},
\end{align*}
which  completes the proof.
\end{proof}

In the next subsections we give  bounds for the mutual information under various assumptions on the random variables $X$ and $f$. 

\subsection{Bounding $I(F; Y)$: bounded likelihood ratios}

We consider again a Markov chain \eqref{def: Markov:chain}
and assume that there exists a constant $C \ge 1$
and a set $\XX'$ that has full mass  under $\PP_f$ for every $f \in \FF_0$, such that 
\begin{equation}\label{eq: bound}
\sup_{x \in \XX'} \max_{f_1, f_2 \in \FF_0} \frac{d\mathbb{P}_{f_1}}{d\mathbb{P}_{f_2}}(x) \le C. 
\end{equation}
This condition bounds the information in the first link $F \to X$ of the chain. 
As a result, it becomes possible to derive an upper bound on $I(F;Y)$ in terms of the 
constant $C$ and the information $I(X;Y)$ in the other link of the chain.

{The following theorem is a slight extension of Lemma 3 of \cite{zhang:2013} {(without the independence assumption, see later)} where we allow the random variables $X$ and $Y$ to be continuous as well, unlike in Lemma 3 of \cite{zhang:2013}, where it was implicitly assumed that they are discrete (by using the entropy $H(X)$ and $H(X|Y)$ in the proof). However, in our manuscript $X$ is continuous and therefore the above mentioned lemma does not apply directly.}

\begin{theorem}\label{theorem: h1}
Assume that \eqref{eq: bound} holds for $C \ge  1$. Then 
\[
I(F; Y) \le 2C^2(C - 1)^2I(X;Y).
\]
\end{theorem}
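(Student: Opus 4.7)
The plan is to reduce the bound to a pointwise-in-$y$ estimate and integrate with respect to $P_Y$. Write
$I(F;Y) = \int K(P_{F\given Y=y}, P_F)\,dP_Y(y)$ and $I(X;Y) = \int K(P_{X\given Y=y}, P_X)\,dP_Y(y)$,
and introduce the Radon--Nikodym derivative $g_f(y) := dP_{Y\given F=f}/dP_Y(y)$, which by Bayes also equals $dP_{F\given Y=y}/dP_F(f)$. The first step is to observe that \eqref{eq: bound} propagates from the $X$- to the $Y$-channel: since $p_{Y\given F=f}(y) = \int q(y\given x)\, p_{X\given F=f}(x)\,dx$ and $p_Y(y) = \int q(y\given x)\, p_X(x)\,dx$, the pointwise bounds $1/C \le p_{X\given F=f}/p_X \le C$ imply $g_f(y) \in [1/C, C]$, while by construction $E_F[g_F(y)] = 1$ for every $y$.

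The core of the argument consists of two complementary pointwise estimates. For the upper bound on $K(P_{F\given Y=y}, P_F)$, I would Taylor-expand $h(u) = u \log u$ at $u = 1$: since $h''(u) = 1/u \le C$ on $[1/C, C]$, one has $h(u) \le (u-1) + \tfrac{C}{2}(u-1)^2$, and combining with $E_F[g_F(y) - 1] = 0$,
\[ K(P_{F\given Y=y}, P_F) \;=\; E_F\bigl[h(g_F(y))\bigr] \;\le\; \tfrac{C}{2}\, E_F\bigl[(g_F(y)-1)^2\bigr]. \]
For the matching bound relating $g_f(y)$ to $K(P_{X\given Y=y}, P_X)$, the Markov structure yields the averaging identity $g_f(y) = E_{X\given Y=y}[\psi_f(X)]$, where $\psi_f(x) := dP_{X\given F=f}/dP_X(x) \in [1/C, C]$ has mean one under $P_X$. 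Writing $g_f(y) - 1 = E_{X\given Y=y}[\psi_f(X)] - E_X[\psi_f(X)]$, applying the coupling duality $|E_P\phi - E_Q\phi| \le \mathrm{osc}(\phi)\,\|P - Q\|_{TV}$ to $\phi = \psi_f - 1$ (whose oscillation equals $C - 1/C$), and closing with Pinsker's inequality, produces the $f$-free pointwise bound
\[ (g_f(y) - 1)^2 \;\le\; \tfrac{(C^2-1)^2}{2C^2}\, K(P_{X\given Y=y}, P_X). \]

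Stitching the two displays together, taking $E_F$ on the right-hand side of the first (trivial because the second display is $f$-independent), and integrating against $P_Y$ yields $I(F;Y) \le \tfrac{(C^2-1)^2}{4C}\, I(X;Y)$, and the elementary inequality $(C+1)^2 \le 8 C^3$ for $C \ge 1$ then delivers the claimed bound $2 C^2 (C-1)^2\, I(X;Y)$. The main obstacle, and the reason this is a nontrivial extension of the discrete argument of \cite{zhang:2013}, is that $H(X)$ may fail to be defined when $X$ is continuous, so the entropic identities $I(F;Y) = H(F) - H(F\given Y)$ and their manipulations cannot be invoked directly; the route above sidesteps entropies entirely by working solely with Radon--Nikodym derivatives and KL divergences, and the averaging identity $g_f(y) = E_{X\given Y=y}[\psi_f(X)]$ has to be justified through the regular conditional distributions guaranteed by the Polish-space setting.
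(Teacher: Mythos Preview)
Your proof is correct and follows the same overall skeleton as the paper: disintegrate $I(F;Y)=\int K(P_{F\given Y=y},P_F)\,dP_Y(y)$, bound the inner KL by a multiple of $\|P_{X\given Y=y}-P_X\|_{\mathrm{TV}}^2$, apply Pinsker, and integrate back to $I(X;Y)$. The execution of the middle step differs. The paper adds the nonnegative reverse KL and invokes $|\log(a/b)|\le |a-b|/(a\wedge b)$ to obtain a $\chi^2$-type bound, then uses the identity $p(f\given y)-p(f)=\int (p(f\given x)-p(f))\,(dP_{X\given Y=y}-dP_X)$ together with $|p(f\given x)-p(f)|\le C(C-1)(p(f)\wedge p(f\given y))$ to pull out $\|\cdot\|_{\mathrm{TV}}$. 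You instead Taylor-expand $u\log u$ to get $K\le \tfrac{C}{2}E_F[(g_F-1)^2]$ and use the averaging identity $g_f(y)=E_{X\given Y=y}[\psi_f(X)]$ with the oscillation coupling bound. Your route is a bit cleaner and yields the sharper constant $(C^2-1)^2/(4C)$ before weakening to $2C^2(C-1)^2$; the paper's argument lands directly on $4C^2(C-1)^2\|\cdot\|_{\mathrm{TV}}^2$ and then halves via Pinsker. Both rely on the same crucial observation that the bound $[1/C,C]$ on $\psi_f=p(\cdot\given f)/p(\cdot)$ propagates through the channel to $g_f$ (equivalently, to $p(f\given y)/p(f)$), which is exactly the paper's display \eqref{eq: piet}.
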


\begin{proof}
In view of \eqref{eq: ii}
\begin{align}
I(F; Y) & ={\int}\int p(f \given y)\log\frac{p(f\given y)}{p(f)}\,df{dP_Y(dy)}.\label{eq: ii_new}
\end{align}

Since the KL-divergence is nonnegative, we have
\begin{align*}
 -\int p(f)\log\frac{p(f\given y)}{p(f)}\,df \ge 0. 
\end{align*}
It follows that the inner integral in \eqref{eq: ii_new} is bounded by 
\[
\int (p(f \given y) - p(f))\log\frac{p(f\given y)}{p(f)}\,df.
\]
Since $|\log (a/b)| \le |a-b|/(a \wedge b)$, we have the further bound 
\[
\K(P_{F\given Y = y}, P_F) \le \int \frac{|(p(f \given y) - p(f)|^2}
{p(f \given y) \wedge p(f)}\,df.
\]
 We will see ahead that the denominator in the integrand is always strictly positive.

We also define  $p(f \given x) = P_{F\given X=x}(\{f\})$.
Then by conditioning on $X$ we see that 
\begin{align*}
p(f\given y)  = \int p(f\given x)\,dP_{X\given Y=y}(x), \qquad
p(f)  = \int p(f\given x)\,dP_X(x).
\end{align*}
By subtracting these relations and using also that 
\[
0   = \int p(f)\big(dP_{X\given Y=y}(x)- dP_X(x)\big),
\]
we obtain
\begin{align}\label{eq: henk}
|p(f \given y) - p(f)| & = \Big| \int (p(f\given x)- p(f))
\big(dP_{X\given Y=y}(x)- dP_X(x)\big)\Big|\nonumber\\
& \le 2 \sup_{x \in \XX'}|p(f\given x)- p(f)| \ \|P_{X\given Y= y}- P_X\|_\text{TV}.
\end{align}
Now by Bayes' formula and the assumption \eqref{eq: bound} on the likelihood, 
\begin{equation}\label{eq: piet}
\frac{p(f\given x)}{p(f)} = \frac{1}{\int d\mathbb{P}_{f'}/d\mathbb{P}_{f}(x)p(f')\,df'} 
 \in [1/C, C] 
\end{equation}
for all $x \in \XX'$. But then also 
\[
(1-C) p(f) \le (1/C - 1) p(f) \le p(f\given x) - p(f) \le  (C - 1) p(f),
\]
that is, $|p(f\given x) - p(f)| \le  (C - 1) p(f)$. 
Also note that since 
\[
p(f\given y) =\int p(f\given x)\,dP_{X\given Y=y}(dx), 
\]
\eqref{eq: piet} implies that $p(f\given y)/p(f) \in [1/C, C]$
as well. In particular, $p(f) \le C p(f\given y)$. 
Together, we get 
\[
|p(f\given x) - p(f)| \le C(C-1) (p(f) \wedge p(f\given y)) 
\]
for all $x \in \XX'$. 
Combining with what we had above, we get 
\begin{align*}
|p(f \given y) - p(f)|  \le 2C(C - 1) (p(f) \wedge p(f\given y)) \|P_{X\given Y= y}- P_X\|_\text{TV},
\end{align*}
and hence 
\[
\frac{|(p(f \given y) - p(f)|^2}
{p(f \given y) \wedge p(f)} \le 4C^2(C-1)^2 p(f) \|P_{X\given Y= y}- P_X\|^2_\text{TV}.
\]
Integrating w.r.t.\ $f$ this gives the bound 
\[
\K(P_F, P_{F\given Y = y}) \le 4C^2(C-1)^2 
\|P_{X\given Y= y}- P_X\|^2_\text{TV}.
\]
Use  Pinsker's inequality (e.g. \cite{tsybakov:2009}, p.\ 88) 
and integrate w.r.t\ $P_Y$ to arrive at  the statement of the theorem.
\end{proof}

\subsection{Bounding $I(F; Y)$: general case}

Let us consider again the  Markov chain \eqref{def: Markov:chain}, but we drop the condition that we have a 
uniform bound on the likelihood ratio. {The following theorem is a slight extension of Lemma 4 of \cite{zhang:2013} 
{(without the independence assumption, see later)} where we allow again that the random variables $X$ and $Y$ are continuous as well, unlike in Lemma 4 of \cite{zhang:2013}, where it was implicitly assumed that they are discrete.}

\begin{theorem}\label{theorem: h2}
For all $C \ge 1$ we have
\[
I(F; Y) \le \log 2 + \frac{\log|\FF_0|}{|\FF_0|}\sum_{f \in \FF_0}
\PP_f\Big(\max_{f_1, f_2 \in \FF_0} \frac{d \mathbb{P}_{f_1}}{d \mathbb{P}_{f_2}}(X) > C\Big) 
+ 2{C^2(C-1)^2}I(X; Y). 
\]
\end{theorem}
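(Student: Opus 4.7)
The plan is a truncation-at-likelihood-ratio-$C$ argument that reduces the general case to Theorem \ref{theorem: h1}. I would define the ``good'' set
\[
\XX' = \Big\{x \in \XX : \max_{f_1, f_2 \in \FF_0} \frac{d\PP_{f_1}}{d\PP_{f_2}}(x) \le C\Big\},
\]
on which the hypothesis of Theorem \ref{theorem: h1} is satisfied by construction, and let $J = \mathbf{1}\{X \in \XX'\}$ be its indicator. Since $J$ is a deterministic function of $X$, one retains a Markov chain $F \to X \to (Y, J)$.

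First I would apply the chain rule for mutual information,
\[
I(F; Y) \le I(F; (Y, J)) = I(F; J) + I(F; Y \mid J),
\]
and bound $I(F; J) \le H(J) \le \log 2$, producing the first term in the statement. Next I would decompose
\[
I(F; Y \mid J) = \PP(J=1)\, I(F; Y \mid J=1) + \PP(J=0)\, I(F; Y \mid J=0),
\]
and control the $J = 0$ branch using $I(F; Y \mid J=0) \le H(F) = \log|\FF_0|$ together with $\PP(J=0) = |\FF_0|^{-1}\sum_{f \in \FF_0} \PP_f(X \notin \XX')$ (which follows from the uniformity of $F$); this produces the middle term.

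The heart of the argument is bounding the last term $\PP(J=1) I(F; Y \mid J=1)$ by $2C^2(C-1)^2 I(X; Y)$. Conditional on $J=1$, the data $X$ is supported in $\XX'$ where the likelihood ratios are bounded by $C$; since the proof of Theorem \ref{theorem: h1} only exploits this boundedness and not any specific property of the law of $F$, I would invoke it in the conditional setting to obtain $I(F; Y \mid J=1) \le 2C^2(C-1)^2 I(X; Y \mid J=1)$. To close the bound, I would observe that because $J$ is a function of $X$, the chain rule $I(X; (Y,J)) = I(X;J) + I(X; Y \mid J) = I(X;Y) + I(X; J \mid Y)$ together with $H(J\mid Y) \le H(J) = I(X;J)$ yields $I(X;Y \mid J) \le I(X;Y)$, and therefore $\PP(J=1) I(X;Y \mid J=1) \le I(X;Y\mid J) \le I(X;Y)$.

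The main obstacle will be the conditional application of Theorem \ref{theorem: h1}: upon conditioning on $\{J=1\}$, the prior on $F$ is no longer uniform and the conditional likelihood ratios acquire extra factors $\pi_{f_2}/\pi_{f_1}$ with $\pi_f = \PP_f(X \in \XX')$. I would handle this by verifying that $\pi_{f_2}/\pi_{f_1} \in [1/C, C]$, which follows by integrating the pointwise likelihood ratio bound over $\XX'$, so that a careful rerun of the Theorem \ref{theorem: h1} argument preserves the stated constant. An alternative, more robust route is to mimic that proof directly in the unconditional setting by splitting the integral representation $p(f \mid y) - p(f) = \int (p(f \mid x) - p(f))\,(dP_{X \mid Y=y}(x) - dP_X(x))$ over $\XX'$ and $(\XX')^c$, using the bounded likelihood ratio on $\XX'$ and absorbing the remainder into the $\PP_f(X \notin \XX')$ term that already appears on the right-hand side.
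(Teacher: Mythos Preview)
Your proposal is correct and matches the paper's proof almost step for step: the paper defines the same good set $B$ and indicator $E$, uses the same chain-rule decomposition $I(F;Y)\le I(F;E)+I(F;Y\mid E)$, bounds $I(F;E)\le\log 2$, handles the $E=0$ branch via $H(F\mid E=0)\le\log|\FF_0|$, and proves \eqref{eq: tp3} by rerunning the proof of Theorem~\ref{theorem: h1} conditionally on $E=1$, then uses $I(X;Y\mid E)\le I(X;Y)$ (obtained there from $I(X;Y\mid E)=I(Y;(X,E))-I(Y;E)\le I(Y;X)$). One small slip: you wrote $I(F;Y\mid J=0)\le H(F)$, but the correct intermediate bound is $H(F\mid J=0)$, which is still $\le\log|\FF_0|$ since $F$ is finite-valued; the conclusion is unaffected.
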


\begin{proof}
Define the set 
\[
B = \Big\{x: \in \XX: \max_{f_1, f_2 \in \FF_0} \frac{d \mathbb{P}_{f_1}}{d \mathbb{P}_{f_2}}(x) \le C\Big\}
\]
and the indicator variable $E = 1_{X \in B}$. With this notation the statement
of the theorem reads
\[
I(F; Y) \le \log 2 + P_{(X,F)}(E=0)\log|\FF_0| + 2{C^2(C-1)^2}I(X; Y),
\]
where $P_{(X,F)}$ is the probability measure defined by the Markov chain, 
i.e.\ the measure  under which $F$ is uniform and $X \given (F=f) \sim \PP_f$.

By the chain rule and the fact that {the mutual} information is nonnegative, 
$I(F; (Y,E)) = I(F; Y) + I(F; E\given Y) \ge I(F,Y)$.
On the other hand, applying the chain rule with $Y$ and $E$ reversed shows that
$I(F; (Y,E)) = I(F; Y \given E) + I(F; E)$.
Hence, we have the inequality
\[
I(F;Y) \le I(F; Y \given E) + I(F; E).
\]
The second term on the right involves only discrete variables and can be bounded by $H(E)$.
This is the entropy of a Bernoulli variable, which is bounded by $\log 2$. 
The first term equals
\[
pI(F; Y \given E=1) + (1-p)I(F; Y \given E=0),
\]
where $p = P_X(E = 1)$. 
Below we prove that 
\begin{equation}\label{eq: tp3}
I(F; Y \given E=1) \le 2{C^2(C-1)^2}I(X,Y \given E=1).
\end{equation}

By the chain rule, 
$I(X;Y\given E) = I(Y; (X,E)) - I(Y; E) \le I(Y; (X,E))$.
Since $E$ is a function of $X$, the last quantity  equals $I(Y; X)$. 
Next, observe that it follows from the definitions 
that 
\[
I(F; Y \given E= 0) = H(F \given E=0) - 
{\int \int p(f|y)\log\frac{1}{p(f|y)}df \,dP_{Y\given E=0}(y)} \le H(F \given E=0). 
\]
Since $F \given E=0$ lives in the finite set $\FF_0$, this is further bounded 
by $\log|\FF_0|$.

It remains to establish \eqref{eq: tp3}. This essentially follows
from conditioning on $E=1$ everywhere in the proof of Theorem \ref{theorem: h1}. 
Indeed, conditioning in the first part of the proof shows that 
\[
I(F; Y \given E = 1)  \le \int\Big(\int \frac{|(p(f \given y, E=1) - p(f\given E=1)|^2}
{p(f \given y, E=1) \wedge p(f\given E=1)}\,df\Big)\,dP_{Y\given E=1}(dy)
\]
and
\begin{align*}
|p(f \given y, E=1) - p(f\given E=1)|  &\le 2 \sup_{x \in B}\Big|p(f\given x, 
E=1)- p(f\given E=1)\Big|\\
&\qquad\times \ \|P_{X\given Y= y, E=1}- P_{X\given E=1}\|_\text{TV}.
\end{align*}
Now observe that since the likelihood ratio is uniformly bounded by $C$ 
for $x \in B$, Bayes formula implies that 
\[
\frac{p(f\given x, E=1)}{p(f\given E=1)} \in {[1/C, C]}
\]
for all $x \in B$. We can then follow the  rest of the proof of Theorem \ref{theorem: h1}
and arrive at \eqref{eq: tp3}.
\end{proof}

\subsection{Bounding $I(F; Y)$: extra independence assumption}\label{sec: mutual:independence}

Next we consider one additional assumption on the structure of 
the problem. We assume that the
data $X$ is a $d$-dimensional vector of 
the form $X = (X_1, \ldots, X_d)$, 
and that $F$ is a $d$-dimensional vector as well such that 
for all coordinates $j \in \{1, \ldots, d\}$, it holds 
that {$F_j$ and} $X_j|(F_j=f_j)$ are independent of $F_{-j}$.
More precisely, we assume that  for the marginal conditional 
density of $X_j$ it holds that 
\[
{p(x_j|f) = p(x_j\given f_{j})}
\]
for every $j$. Note that this is an assumption on the statistical 
model for the data $X$ and is not related to the distribution of $F$. 

{The following theorem is an extended version of Lemma 3 of \cite{zhang:2013} (now also with the independence assumption) as it holds also for continuous random variables $X$ and $Y$, unlike the result derived in \cite{zhang:2013} .}

\begin{theorem}
Suppose that $F$ and $X$ are $d$-dimensional and that $X_j\given F$ only 
depends on $F_j$ {(i.e.\ $p(x_j|f)=p(x_j|f_j)$)}. Moreover, suppose that for the marginal densities $p(x_j \given f_j)$
it holds that 
\[
\sup_{x_j} \max_{f \not =  f'} \frac{p(x_j \given f_j)}{p(x_j \given f'_j)} \le C
\]
for a constant $C \ge 1$. Then
\[
I(F; Y) \le 2C^2(C - 1)^2\sum_{j=1}^d I(X_j; Y \given F_{1:j-1}).
\]

\end{theorem}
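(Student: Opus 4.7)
The natural approach is to iterate the chain rule for mutual information and then apply Theorem \ref{theorem: h1} conditionally on each prefix. First I would use Proposition \ref{prop: chain} inductively to write
\[
I(F; Y) = \sum_{j=1}^d I(F_j; Y \given F_{1:j-1}),
\]
which matches the form of the summation in the desired bound.

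Next, for each fixed $j$ and (almost every) realization $f_{1:j-1}$, my plan is to view the conditional law of $(F_j, X_j, Y)$ given $F_{1:j-1} = f_{1:j-1}$ as a Markov chain $F_j \to X_j \to Y$ whose conditional likelihood ratio of $X_j$ given $F_j$ is bounded by $C$. The likelihood bound is immediate from the hypothesis $\sup_{x_j} \max_{f \ne f'} p(x_j \given f_j)/p(x_j \given f'_j) \le C$, since the conditional density of $X_j$ given $F=f$ depends only on $f_j$. Once the conditional Markov chain structure is in place, applying Theorem \ref{theorem: h1} pathwise in $f_{1:j-1}$ yields
\[
I(F_j; Y \given F_{1:j-1} = f_{1:j-1}) \le 2C^2(C-1)^2 \, I(X_j; Y \given F_{1:j-1} = f_{1:j-1}),
\]
and integrating over $f_{1:j-1}$ preserves the inequality; summing over $j$ gives the theorem.

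The main obstacle I foresee is verifying the conditional Markov property $Y \perp F_j \given (X_j, F_{1:j-1})$. Under the original chain $F \to X \to Y$, the conditional law of $Y$ given $X$ does not depend on $F$; the remaining issue is that $X_{-j}$ carries information about $F_{-j}$, which in turn could carry information about $F_j$ beyond what is already captured in $F_{1:j-1}$. Ruling this out requires the coordinates of $F$ to be (conditionally) independent—an assumption that is implicit in the formulation and is satisfied in the application to Lemma \ref{lem: mutual_regression}, where $F$ is uniform on $\{-1,+1\}^d$. Granted coordinate-wise independence of $F$, one has $F_j \perp F_{-j} \given F_{1:j-1}$, and combined with the conditional independence of $X_{-j}$ and $X_j$ given $F$ (coming from $p(x_k \given f)=p(x_k\given f_k)$), this yields $X_{-j} \perp F_j \given (X_j, F_{1:j-1})$, and therefore $Y \perp F_j \given (X_j, F_{1:j-1})$, as required. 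Once this conditional Markov property is checked, the rest of the argument is a mechanical combination of the chain rule and Theorem \ref{theorem: h1}.
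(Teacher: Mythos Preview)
Your proposal is correct and follows essentially the same approach as the paper: both start from the chain rule $I(F;Y)=\sum_j I(F_j;Y\mid F_{1:j-1})$ and then bound each summand via the likelihood-ratio argument of Theorem~\ref{theorem: h1}. The only cosmetic difference is that the paper re-runs the proof of Theorem~\ref{theorem: h1} inline in the conditional setting (with the $\tilde p$ notation) rather than invoking it as a black box, and it uses the conditional Markov identity $\tilde p(f_j\mid y)=\int \tilde p(f_j\mid x_j)\,\tilde p(x_j\mid y)\,dx_j$ without explicitly isolating the independence hypothesis that you correctly flag as the key point.
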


\begin{proof}
By the chain rule,
\[
I(F; Y) = \sum_{j=1}^d I(F_j; Y \given F_{1:j-1}). 
\]
So consider term $j$ in the sum. By definition of conditional mutual information and Fubini's theorem, 
\begin{align*}
& I(F_j; Y \given F_{1:j-1})  = \\
& \int p(f_{1:j-1})
\Big(\int p(y\given f_{1:j-1})
\Big(\int {p(f_j\given y, f_{1:j-1})\log\frac{p(f_j\given y, f_{1:j-1})}
{p(f_j\given f_{1:j-1})}\,df_j\Big)\,dy}\Big)\,df_{1:j-1}.
\end{align*}

We are first going to analyze the inner integral. So fix $f_{1:j-1}$ for now. 
To simplify the notation somewhat
we are going to write densities that are conditional on $f_{1:j-1}$ by $\pp$ instead of $p$.
Then the inner integral becomes 
\[
\int \pp(f_j\given y)\log\frac{\pp(f_j\given y)}
{\pp(f_j)}\,df_j.
\]
Since KL-divergence is nonnegative, it follows that the inner integral  is bounded by 
\[
\int (\pp(f_j \given y) - \pp(f_j))\log\frac{\pp(f_j\given y)}{\pp(f_j)}\,df_j.
\]
In view of the inequality $|\log (a/b)| \le |a-b|/(a \wedge b)$ we obtain 
the further bound 
\[
 \int \frac{|(\pp(f_j \given y) - \pp(f_j)|^2}
{\pp(f_j \given y) \wedge \pp(f_j)}\,df_j.
\]
(we will see ahead that the denominator in the integrand is always strictly positive).
By conditioning on $x_j$ (and still on $f_{1: j-1}$)  we see that 
\begin{align}\label{eq: klaas}
\pp(f_j\given y)  = \int \pp(f_j\given x_j)\pp(x_j\given y)\,dx_j, \qquad
\pp(f_j)  = \int \pp(f_j\given x_j)\pp(x_j)\,dx_j.
\end{align}
By subtracting these relations and using also that 
$0   = \int \pp(f_j)(\pp(x_j\given y) - \pp(x_j))\,dx_j$,
we obtain
\begin{align}\label{eq: henk}
|(\pp(f_j \given y) - \pp(f_j)| & = \Big| \int (\pp(f_j \given x_j) - \pp(f_j))
(\pp(x_j\given y) - \pp(x_j))\,dx_j\Big|\nonumber\\
& \le 2 \sup_{x_j \in \XX'}|\pp(f_j \given x_j) - \pp(f_j)| \ \|\tilde P_{X_j\given Y= y}- 
\tilde P_{X_j}\|_\text{TV}.
\end{align}
Now by Bayes' formula, 
\[
\frac{\pp(f_j \given x_j)}{\pp(f_j)} = \frac{\pp(x_j\given f_j)}
{\int \pp(x_j\given f'_j)\,df'_{j}}.
\]
But by the conditional independence assumption, 
$\pp(x_j\given f_j) = p(x_j\given f_{1:j}) = p(x_j\given f_j)$.
Hence, by the assumed bound on the marginal likelihood-ratio, 
${\pp(f_j \given x_j)}/{\pp(f_j)} \in [1/C, C]$
for all $x_j \in \XX'$. 
But then also 
\[
(1-C) \pp(f_j) \le (1/C - 1) \pp(f_j) \le \pp(f_j\given x_j) - \pp(f_j) \le  (C - 1) \pp(f_j),
\]
that is, $|\pp(f_j\given x_j) - \pp(f_j)| \le  (C - 1) \pp(f_j)$. 
Also note that the first identity in \eqref{eq: klaas} implies that  
 $\pp(f_j\given y)/\pp(f_j) \in [1/C, C]$
as well. In particular, $\pp(f_j) \le C \pp(f_j\given y)$. 
Together, we get 
\[
|\pp(f_j\given x_j) - \pp(f_j)| \le C(C-1) (\pp(f_j) \wedge \pp(f_j\given y)) 
\]
for all $x_j \in \XX'$. 
Combining with what we had above, we get 
\begin{align*}
|\pp(f_j \given y) - \pp(f_j)|  \le 2C(C - 1) 
(\pp(f_j) \wedge \pp(f_j\given y)) \|\tilde P_{X_j\given Y= y}- \tilde P_{X_j}\|_\text{TV},
\end{align*}
and hence 
\[
\frac{|(\pp(f_j \given y) - \pp(f_j)|^2}
{\pp(f_j \given y) \wedge \pp(f_j)} \le 4C^2(C-1)^2 \pp(f_j) 
\|\tilde P_{X_j\given Y= y}- \tilde P_{X_j}\|^2_\text{TV}.
\]
Integrating w.r.t.\ $f_j$ this gives the bound 
\[
\int \pp(f_j\given y)\log\frac{\pp(f_j\given y)}
{\pp(f_j)}\,df_j \le 4C^2(C-1)^2 
\|\tilde P_{X_j\given Y= y}- \tilde P_{X_j}\|^2_\text{TV}.
\]
By Pinsker's inequality, 
\[
\|\tilde P_{X_j\given Y= y}- \tilde P_{X_j}\|^2_\text{TV} \le 
\frac12 K(\tilde P_{X_j\given Y= y}, \tilde P_{X_j}).
\]
Hence, by multiplying by $\pp(y)$ and integrating we find that 
\[
\int \pp(y) \Big(\int \pp(f_j\given y)\log\frac{\pp(f_j\given y)}
{\pp(f_j)}\,df_j \Big)\,dy \le 2C^2(C-1)^2I(X_j; Y \given F_{{1:j-1}} = f_{1:j-1}).
\]
Multiplying by $p(f_{1:j-1})$ and integrating gives
\[
I(F_j; Y \given F_{1:j-1}) \le 2C^2(C-1)^2I(X_j; Y \given F_{1:j-1}).
\]
\end{proof}


We also have the version of the preceding result for the case
that we do not have the likelihood ratio bound everywhere. 
{This result is an extended version of Lemma 4 of \cite{zhang:2013}.}

\begin{theorem}\label{thm: mutualUB:independent}
Suppose that $F$ and $X$ are $d$-dimensional and that 
{$F_j$ and $X_j|F_j$ are independent of  $F_{-j}$}. For $C \ge 1$, define
\[
B_j = \Big\{ x_j: \max_{f \not =  f'} \frac{p(x_j \given f_j)}{p(x_j \given f'_j)} \le C\Big\}
\]
for a constant $C \ge 1$. Then
\begin{align*}
I(F; Y) & \le \sum_{j=1}^d \Big((\log 2) \sqrt{P_{X_j}(X_j \not\in B_j)}+
\log|\FF_0| P_{X_j}(X_j \not \in B_j)\Big)+ 2{C^2(C-1)^2} I(X;Y).
\end{align*}
\end{theorem}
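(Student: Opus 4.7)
The plan is to combine the coordinate-wise chain-rule argument used in the preceding (clean-likelihood) theorem with the ``restrict to the likelihood-bounded set'' decomposition from the proof of Theorem~\ref{theorem: h2}. The starting point is Proposition~\ref{prop: chain} applied coordinate by coordinate,
\[
I(F; Y) = \sum_{j=1}^d I(F_j; Y \given F_{1:j-1}),
\]
reducing the task to a uniform bound on each summand that collapses to the stated expression after summation.

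For each coordinate $j$ I would set $E_j = \mathbf{1}\{X_j \in B_j\}$, which is a measurable function of $X_j$ alone, and mimic the manipulation in the proof of Theorem~\ref{theorem: h2} under the additional conditioning on $F_{1:j-1}$ to split
\[
I(F_j; Y \given F_{1:j-1}) \le I(F_j; Y \given F_{1:j-1}, E_j) + I(F_j; E_j \given F_{1:j-1}),
\]
where the first piece decomposes further as a convex combination over $\{E_j=0\}$ and $\{E_j=1\}$. On the ``bad'' event $\{E_j=0\}$ the crude estimate $I(F_j; Y\given \cdots) \le H(F_j\given \cdots) \le \log|\FF_0|$ applies; combined with the fact that the independence assumption $p(x_j\given f) = p(x_j\given f_j)$ makes $P(E_j=0\given F_{1:j-1}) = P_{X_j}(X_j\notin B_j)$ independent of $F_{1:j-1}$, this produces the middle term $\log|\FF_0|\,P_{X_j}(X_j\notin B_j)$ in the statement. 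On the ``good'' event $\{E_j=1\}$ the marginal likelihood ratio of $X_j$ is uniformly bounded by $C$, so the Bayes-formula + Pinsker computation underlying the proof of the earlier theorem (with ratios $\pp(f_j\given x_j)/\pp(f_j)$) carries through to yield
\[
I(F_j; Y \given F_{1:j-1}, E_j=1) \le 2C^2(C-1)^2\, I(X_j; Y \given F_{1:j-1}, E_j=1).
\]
Summing over $j$ and using the chain rule together with the data-processing inequality for $X\to Y$ collapses these into a quantity dominated by $2C^2(C-1)^2\, I(X; Y)$, i.e.\ the third term of the bound.

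The first term arises from $I(F_j; E_j\given F_{1:j-1}) \le H(E_j\given F_{1:j-1})$. Writing $q_j = P_{X_j}(X_j\notin B_j)$, concavity of the binary entropy and Jensen's inequality let us pass to $H(q_j)$, which we then bound by a sharpened estimate of the form $H(q) \lesssim \sqrt{q}$ (obtained, for instance, from $-q\log q \le (2/e)\sqrt{q}$ together with the symmetric estimate for $-(1-q)\log(1-q)$). This produces the advertised $(\log 2)\sqrt{q_j}$ contribution. The main obstacle is the transfer of the Bayes/Pinsker computation from Theorem~\ref{theorem: h1} to the present setting: one must verify that, conditionally on both $F_{1:j-1}$ and $\{E_j=1\}$, the ratio $\pp(f_j\given x_j)/\pp(f_j)$ still belongs to $[1/C, C]$. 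This boils down to unpacking Bayes' formula for the conditional law of $F_j$ given $X_j$ and $F_{1:j-1}$, using the independence assumption $p(x_j\given f) = p(x_j\given f_j)$ to reduce everything to the $j$-th marginal likelihood, and then invoking the uniform control on this marginal over $B_j$. Once this conditional version of the Theorem~\ref{theorem: h1} estimate is in place, the three contributions combine termwise to give the stated inequality.
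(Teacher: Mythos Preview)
Your outline follows the paper's proof almost exactly: chain rule to reduce to $I(F_j;Y\given F_{1:j-1})$, then for each $j$ the same conditioning on $E_j=\mathbf 1\{X_j\in B_j\}$ as in Theorem~\ref{theorem: h2}, yielding the three contributions in the statement. The entropy bound $H(E_j\given F_{1:j-1})\le H(E_j)\le (\log 2)\sqrt{P_{X_j}(X_j\notin B_j)}$ and the Bayes/Pinsker computation on the good set are handled the same way in the paper.

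The one place where you are too quick is the sentence ``Summing over $j$ and using the chain rule together with the data-processing inequality for $X\to Y$ collapses these into a quantity dominated by $2C^2(C-1)^2\, I(X; Y)$.'' After absorbing the $E_j=1$ conditioning via the usual $E_j$-is-a-function-of-$X_j$ trick (exactly as in Theorem~\ref{theorem: h2}), what remains is $\sum_j I(X_j;Y\given F_{1:j-1})$. The chain rule gives $I(X;Y)=\sum_j I(X_j;Y\given X_{1:j-1})$, so what you actually need is
\[
I(X_j;Y\given F_{1:j-1})\;\le\;I(X_j;Y\given X_{1:j-1})\qquad\text{for each }j,
\]
and this is \emph{not} a direct instance of the data-processing inequality: there is no obvious Markov chain of the form $X_j\to (Y,X_{1:j-1})\to (Y,F_{1:j-1})$ to invoke. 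In the paper this step is isolated as a separate result (Lemma~\ref{lem: mutual:combination}) whose proof combines the Markov property $F\to X\to Y$ (giving $p(x_j\given y,f_{1:j-1},x_{1:j-1})=p(x_j\given y,x_{1:j-1})$) with the independence assumptions (giving $p(x_j\given f_{1:j-1})=p(x_j)=p(x_j\given x_{1:j-1})$), and then closes via a nonnegativity-of-mutual-information argument showing $I(X_j;X_{1:j-1}\given Y,F_{1:j-1})\ge 0$ is exactly the required gap. You should either supply this argument or flag that the step is nontrivial; ``data-processing inequality'' alone does not cover it.
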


\begin{proof}
Again we start with 
\[
I(F; Y) = \sum_{j=1}^d I(F_j; Y \given F_{1:j-1}). 
\]
Now for fixed $j$ we argue as in {Theorem \ref{theorem: h2}} (but with $\log 2$ replaced by $H(1_{X_j\in B_j})$, see the argument above \eqref{eq: tp3}), but conditional on $F_{1:j-1}$, to get
\begin{align*}
I(F_j; Y \given F_{1:j-1})& \le H(1_{X_j \in B_j}\given F_{1:j-1}) + 
\log|\FF_0| P_{X_j}(X_j \not \in B_j)\\
& \qquad+ 2{C^2(C-1)^2}I(X_j,Y \given F_{1:j-1}).
\end{align*}
Since conditioning decreases entropy, 
$$H(1_{X_j \in B_j}\given F_{1:j-1})
\le H(1_{X_j \in B_j}) \le (\log 2) \sqrt{P_{X_j}(X_j \not\in B_j)}.$$
Combining the preceding computations we get that
\begin{align*}
I(F, Y) & \le \sum_{j=1}^d \Big((\log 2) \sqrt{P_{X_j}(X_j \not\in B_j)}+
\log|\FF_0| P_{X_j}(X_j \not \in B_j)\\
&\qquad\qquad+ 2{C^2(C-1)^2}I(X_j,Y \given F_{1:j-1})\Big).
\end{align*}
Then the statement of the theorem follows from Lemma \ref{lem: mutual:combination} (below) and by applying the chain rule of information, i.e.
$$\sum_{j=1}^d I(X_j;Y|F_{1:j-1})\leq \sum_{j=1}^d I(X_j;Y|X_{1:j-1})=I(X;Y).$$

\end{proof}

\begin{lemma}\label{lem: mutual:combination}
Under the assumption that {$X_j|F_j$ and $F_j$ are independent of $F_{-j}$} we have that
\begin{align*}
I(X_j;Y|F_{1:j-1})\leq I(X_j;Y|X_{1:j-1}).
\end{align*}
\end{lemma}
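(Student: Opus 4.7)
The plan is to reduce the inequality to a conditional data-processing inequality applied along a suitable Markov chain. To begin, I would use the independence assumption to rewrite both conditional mutual informations in a comparable form. Since $(F_j,X_j)$ is independent of $F_{-j}$, and under the standard conditional independence of $X_{1:j-1}$ given $F_{1:j-1}$, the variable $X_j$ is independent of both $F_{1:j-1}$ and $X_{1:j-1}$, so $H(X_j\given F_{1:j-1})=H(X_j\given X_{1:j-1})=H(X_j)$. Consequently
\[
I(X_j;Y\given X_{1:j-1})-I(X_j;Y\given F_{1:j-1})
=H(X_j\given Y,F_{1:j-1})-H(X_j\given Y,X_{1:j-1}),
\]
and by the identity $H(X_j\given Y,Z)=H(X_j\given Y)-I(X_j;Z\given Y)$ the right-hand side equals $I(X_j;X_{1:j-1}\given Y)-I(X_j;F_{1:j-1}\given Y)$. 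Thus it suffices to show
\[
I(X_j;F_{1:j-1}\given Y)\le I(X_j;X_{1:j-1}\given Y).
\]

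Next I would establish the Markov chain $F_{1:j-1}\to X_{1:j-1}\to (X_j,Y)$. Intuitively, $F_{1:j-1}$ influences $(X_j,Y)$ only through $X_{1:j-1}$: since $F_{1:j-1}\perp F_{j:d}$ and $X_k$ depends only on $F_k$, the pair $(X_j,X_{j+1:d})$ is independent of $F_{1:j-1}$, and $Y$ is generated from the whole vector $X$ via the kernel $Q(dy\given x)$, which does not involve $F$ directly. Hence the conditional law of $(X_j,Y)$ given $(F_{1:j-1},X_{1:j-1})$ depends only on $X_{1:j-1}$, which is the required Markov property. The precise verification would proceed by writing the joint density and integrating out $F_{j:d}$.

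From this Markov chain I would deduce the conditional one $F_{1:j-1}\to X_{1:j-1}\to X_j$ holding given $Y=y$, for $P_Y$-a.e.\ $y$. Indeed, the chain $F_{1:j-1}\to X_{1:j-1}\to (X_j,Y)$ gives $P(F_{1:j-1}\given X_{1:j-1},X_j,Y)=P(F_{1:j-1}\given X_{1:j-1})=P(F_{1:j-1}\given X_{1:j-1},Y)$, which is exactly the Markov property of $F_{1:j-1}\to X_{1:j-1}\to X_j$ under the conditional law given $Y=y$. Applying the data-processing inequality (Proposition \ref{prop: dataproc}) under this conditional law yields $I(X_j;F_{1:j-1}\given Y=y)\le I(X_j;X_{1:j-1}\given Y=y)$, and integrating against $P_Y$ gives the desired inequality.

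I expect the main obstacle to be the rigorous justification of the Markov chain $F_{1:j-1}\to X_{1:j-1}\to (X_j,Y)$ from the bare hypothesis that $F_j$ and $X_j\given F_j$ are independent of $F_{-j}$; in particular, one needs the (implicit in the model) conditional independence of the coordinates $X_1,\dots,X_d$ given $F$, so that $X_{1:j-1}$ is informationally a function of $F_{1:j-1}$ alone and independent of $(F_{j:d},X_{j:d})$. Once this independence structure is cleanly written down, the two-step deduction via conditional DPI is straightforward.
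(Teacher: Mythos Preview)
Your proof is correct and follows essentially the same route as the paper: both use $H(X_j\mid F_{1:j-1})=H(X_j\mid X_{1:j-1})=H(X_j)$ and then reduce the remaining comparison of $H(X_j\mid Y,F_{1:j-1})$ and $H(X_j\mid Y,X_{1:j-1})$ to the Markov identity $p(x_j\mid y,f_{1:j-1},x_{1:j-1})=p(x_j\mid y,x_{1:j-1})$ together with nonnegativity of $I(X_j;X_{1:j-1}\mid Y,F_{1:j-1})$. Your conditional data-processing formulation is just a repackaging of the paper's explicit density computation, and your caveat about the implicit conditional independence of the $X_k$'s given $F$ is exactly the extra structure the paper also silently uses.
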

\begin{proof}
\begin{align}
I(X_j;Y|F_{1:j-1})&=\int\int\int p(x_j,y|f_{1:j-1})\log\frac{p(x_j,y|f_{1:j-1})}{p(x_j|f_{1:j-1})p(y|f_{1:j-1})}d x_jd y\, p(f_{1:j-1})df_{1:j-1}\nonumber\\
&=\int\int\int p(x_j,y,f_{1:j-1})\log p(x_j|y,f_{1:j-1})dx_jdyd f_{1:j-1}\nonumber\\
&\qquad-\int\int p(x_j,f_{1:j-1})\log p(x_j|f_{1:j-1})dx_j df_{1:j-1}.\label{eq: UB_mutual}
\end{align}
Next we note that since $X_j$ is independent of $F_{1:j-1}$ we have $p(x_j)=p(x_j|f_{1:j-1})$, furthermore {since} $X_j$ and $X_{1:j-1}$ {are independent} we get $p(x_j)=p(x_j|x_{1:j-1})$. Besides, we show below that
\begin{align}
&\int\int\int p(x_j,y,f_{1:j-1})\log p(x_j|y,f_{1:j-1})dx_jdyd f_{1:j-1}\nonumber\\
&\qquad\leq 
\int\int\int\int p(x_j,y,f_{1:j-1},x_{1:j-1})\log p(x_j|y,f_{1:j-1},x_{1:j-1})dx_jdyd f_{1:j-1}dx_{1:j-1}.\label{eq: hulp1}
\end{align}
Combining the preceding assertions we get that the right hand side of \eqref{eq: UB_mutual} is further bounded from above by
\begin{align*}
&\int\int\int\int p(x_j,y,f_{1:j-1},x_{1:j-1})\log p(x_j|y,f_{1:j-1},x_{1:j-1})dx_jdyd f_{1:j-1}dx_{1:j-1}\\
&\qquad-\int\int p(x_j,x_{1:j-1})\log p(x_j|x_{1:j-1})dx_j dx_{1:j-1}\\
&= \int\int\int p(x_j,y,x_{1:j-1})\log p(x_j|y,x_{1:j-1})dx_jdydx_{1:j-1}\\
&\qquad-\int\int\int p(x_j,y,x_{1:j-1})\log p(x_j|x_{1:j-1})dx_j dy dx_{1:j-1}\\
&=\quad I(X_j;Y|X_{1:j-1}),
\end{align*}
where {in} the first equation we used the Markov property of the chain \eqref{def: Markov:chain} combined with the independence of $X_{j:d}$ and $F_{1:j-1}$, i.e.
\begin{align*}
p(x_j|y,f_{1:j-1},x_{1:j-1})=\frac{p(x_j,y|f_{1:j-1},x_{1:j-1})}{p(y|f_{1:j-1},x_{1:j-1})}=\frac{p(x_j,y|x_{1:j-1})}{p(y|x_{1:j-1})}=p(x_j|y,x_{1:j-1}).
\end{align*}

It remained to verify the inequality \eqref{eq: hulp1}, which follows from
\begin{align*}
&\int\int\int\int p(x_j,y,f_{1:j-1},x_{1:j-1})\log p(x_j|y,f_{1:j-1},x_{1:j-1})dx_jdyd f_{1:j-1}dx_{1:j-1}\\
&\qquad-\int\int\int p(x_j,y,f_{1:j-1})\log p(x_j|y,f_{1:j-1})dx_jdyd f_{1:j-1}\\
&\quad = \int\int\int\int  p(x_j,y,f_{1:j-1},x_{1:j-1})\log \frac{p(x_j|y,f_{1:j-1},x_{1:j-1})}{ p(x_j|y,f_{1:j-1})}dx_jdyd f_{1:j-1}dx_{1:j-1}\\
&\quad = \int\int\int\int  p(x_j,y,f_{1:j-1},x_{1:j-1})\log {\frac{p(x_j,x_{1:j-1}|y,f_{1:j-1})}{ p(x_j|y,f_{1:j-1}) p(x_{1:j-1}|y,f_{1:j-1})}}dx_jdyd f_{1:j-1}dx_{1:j-1}\\
&\quad = {I(X_j;X_{1:j-1}|Y,F_{1:j-1} )}\geq 0.
\end{align*}
\end{proof}

\subsection{Bounding $I(F; Y)$: decomposable Markov chain}\label{sec: mutual:decompose}
Suppose  now in addition that the data 
can be decomposed as $X = (X^{(1)}, \ldots X^{(m)})$
and that under  $\mathbb{P}_f$, 
the $X^{(i)}$ are independent and $X^{(i)} \sim \mathbb{P}^{(i)}_f$. 
This is intended to describe a setting in which the 
data is distributed over
 $m$ different local machines. 
The machines have independent data and each machine 
has its local statistical model $(\mathbb{P}^{(i)}_f: f \in \FF)$.  Next 
we have for every $i$ a Markov chain $X^{(i)} \to Y^{(i)}$
defined by some Markov transition kernel $Q^{(i)}$. In other words, 
every machine processes or transforms its local data in some way. 
Next the processed data is aggregated into a vector $Y=(Y^{(1)}, \ldots, Y^{(m)})$. 
As before we consider the collection $\EEE(Y)$ of all estimators that 
are measurable functions of this aggregated, processed data $Y$. 
In this distributed setting we have by Proposition \ref{prop: ind} that
\begin{align}
I(F; Y) \le \sum_i I(F; Y^{(i)}). \label{eq: UB:mutual:decompose}
\end{align}
Then the statement of Theorem \ref{thm: mutualUB:independent} can be reformulated as

\begin{theorem}\label{thm: mutualUB:independent:general}
Let us assume that the data $X$ is decomposable as above and suppose that $F$ and $X$ are $d$-dimensional and that {$X_j \given F_j$ and $F_j$ are independent of
$F_{-j}$}. For $C \ge 1$, define
\[
B_j = \Big\{ x_j: \max_{f \not =  f'} \frac{p(x_j \given f_j)}{p(x_j \given f'_j)} \le C\Big\}
\]
for a constant $C \ge 1$. Then
\begin{align*}
I(F; Y) & \le \sum_{i=1}^m\sum_{j=1}^d \Big((\log 2) \sqrt{P_{X_j^{(i)}}(X_j^{(i)} \not\in B_j)}+
\log|\FF_0| P_{X_j^{(i)}} ({X_j^{(i)}}\not \in B_j)\Big)\\
&\qquad\qquad+ 2{C^2(C-1)^2} \sum_{i=1}^{m} I(X^{(i)};Y^{(i)}).
\end{align*}
Here $I(X^{(i)};Y^{(i)})$ is the mutual information between $X^{(i)}$ and $Y^{(i)}$ 
in the Markov chain $F \to X^{(i)} \to Y^{(i)}$, where $F$ has a 
uniform distribution on $\FF_0$ and  $X^{(i)} \given (F=f) \sim \mathbb{P}^{(i)}_f=P_{X^{(i)}|F=f}$. 
\end{theorem}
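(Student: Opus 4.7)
The proof combines two ingredients: the sub-additivity of mutual information across independent local channels, already recorded as \eqref{eq: UB:mutual:decompose}, and the single-machine bound supplied by Theorem \ref{thm: mutualUB:independent}. So the plan is really just to reduce the decomposable case to $m$ parallel invocations of the earlier theorem.

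First, I would verify that $Y^{(1)}, \ldots, Y^{(m)}$ are conditionally independent given $F$. Under $\mathbb{P}_f$ the local data $X^{(1)}, \ldots, X^{(m)}$ are mutually independent by the decomposability assumption, and each $Y^{(i)}$ is produced from $X^{(i)}$ alone through its own Markov kernel $Q^{(i)}$, so this independence is inherited by the transmitted messages. Applying Proposition \ref{prop: ind} inductively (conditionally on $F$) then yields
\[
I(F; Y) = I\bigl(F; (Y^{(1)}, \ldots, Y^{(m)})\bigr) \le \sum_{i=1}^{m} I(F; Y^{(i)}),
\]
which is exactly \eqref{eq: UB:mutual:decompose}.

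Second, for each fixed $i$ the Markov chain $F \to X^{(i)} \to Y^{(i)}$ satisfies the hypotheses of Theorem \ref{thm: mutualUB:independent}: $F$ and $X^{(i)}$ are $d$-dimensional, the coordinate-wise independence assumption that $F_j$ and $X^{(i)}_j\given F_j$ are independent of $F_{-j}$ is inherited from the global model, and by definition of $B_j$ the marginal likelihood ratios $p(x_j \given f_j)/p(x_j \given f'_j)$ are bounded by $C$ on $B_j$. Applying that theorem to the $i$th chain gives
\[
I(F;Y^{(i)}) \le \sum_{j=1}^d \Bigl((\log 2)\sqrt{P_{X_j^{(i)}}(X_j^{(i)} \notin B_j)} + \log|\FF_0|\, P_{X_j^{(i)}}(X_j^{(i)} \notin B_j)\Bigr) + 2C^2(C-1)^2\, I(X^{(i)};Y^{(i)}).
\]
Summing this inequality over $i = 1,\ldots, m$ and plugging into the first step immediately produces the stated bound.

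There is no substantive obstacle: everything is a direct concatenation of results already established. The only non-mechanical check is the conditional independence of the $Y^{(i)}$ given $F$ needed to invoke Proposition \ref{prop: ind}; this is where the product structure of $\mathbb{P}_f$ and the fact that the kernels $Q^{(i)}$ act on disjoint blocks of coordinates actually enter.
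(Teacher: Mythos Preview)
Your proposal is correct and follows essentially the same approach as the paper: combine the sub-additivity bound \eqref{eq: UB:mutual:decompose} with an application of Theorem \ref{thm: mutualUB:independent} to each local chain $F\to X^{(i)}\to Y^{(i)}$, then sum over $i$. The paper's own proof is in fact just a one-line reference to these two ingredients, so your write-up simply spells out the verification of conditional independence that the paper leaves implicit.
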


\begin{proof}
The statement of the theorem follows by combining assertion \eqref{eq: UB:mutual:decompose} with Theorem \ref{thm: mutualUB:independent}.
\end{proof}

\subsection{Bounding $I(F;Y)$: Decomposable and conditionally independent Markov chain}
We consider a slightly modified setting compared to the preceding sections, i.e. 
we have a Markov chain $F \to (T,X) \to Y$ 
and we assume that $F$ has a uniform distribution on a 
finite set $\FF_0\subset \mathbb{R}^{d}$ and $F$ is independent of $T$. Suppose furthermore that the data 
can be decomposed as $(T,X) = \big((T^{(1)},X^{(1)}), \ldots, (T^{(m)},X^{(m)})\big)$ and conditionally on $F$ the pairs $(T^{(i)},X^{(i)})$, $i=1,...,m$, are independent. This is intended to describe the random design regression setting in which the 
data is distributed over $m$ different local machines, where $T^{(i)}\in[0,1]^{n/m}$ is the 
vector of observed design points in machine $i$, and $X^{(i)}\in\mathbb{R}^{n/m}$ is the vector 
of corresponding noisy observations of the regression function in these points.

 Next we have for every $i$ a Markov chain $(T^{(i)},X^{(i)}) \to Y^{(i)}$
defined by some Markov transition kernel $Q^{(i)}$.  
In this modified distributed setting we still have by Proposition \ref{prop: ind} that
\begin{align}
I(F; Y) \le \sum_i I(F; Y^{(i)}). \label{eq: UB:mutual:decompose}
\end{align}
Let $X^{(i)}_{[j_1:j_2]}=(X^{(i)}_{j_1},X^{(i)}_{j_1+1},...,X^{(i)}_{j_2})$ and assume that conditionally on $T^{(i)}=t\in[0,1]^{n/m}$ there exist $1=\ell_0\leq\ell_1\leq\ell_2\leq... \ell_d\leq n/m$ indexes such that the local data $X^{(i)}\in\mathbb{R}^{n/m}$ satisfies that $F_j\in \mathbb{R}$ and $X_{[(\ell_{j-1}+1):\ell_j]}^{(i)}|(F_j=f_j, T^{(i)}=t)$ are independent of $F_{-j}\in\mathbb{R}^{d-1}$.
More precisely, we assume that  for the marginal conditional 
density of $X_{[(\ell_{j-1}+1):\ell_j]}^{(i)}\in\mathbb{R}^{\ell_j-\ell_{j-1}}$ it holds that 
\[
{p(x|f,t) = p(x\given f_{j},t)},\quad x\in\mathbb{R}^{\ell_j-\ell_{j-1}}
\]
for every $j$. Note that by data processing inequality and the independence of $T^{(i)}$ and $F$ we get that
\begin{align}
I(F; Y^{(i)})=I(F; Y^{(i)}|T^{(i)}).\label{eq: dataprocess:condind}
\end{align}
Then in view of Theorem \ref{thm: mutualUB:independent:general} we get the following upper bound on the conditional mutual information.

\begin{theorem}\label{thm: mutualUB:condindependent}
Let us assume that conditionally on the local design $T^{(i)}$ the local data $X^{(i)}$ satisfies that $X_{[(\ell_{j-1}+1):\ell_j]}^{(i)} \given (F_j,T^{(i)})$ and $F_j$ are independent of
$F_{-j}$, and $T^{(i)}$ is independent of $F$. For $C \ge 1$, define
\[
B_j(t) = \Big\{ x\in \mathbb{R}^{\ell_j-\ell_{j-1}}: \max_{f \not =  f'\in \mathcal{F}_0} \frac{p(x \given f_j,t)}{p(x\given f'_j,t)} \le C\Big\}.
\]
Then
\begin{align*}
I(F; Y^{(i)}|T^{(i)}=t) & \le \sum_{j=1}^d \Big((\log 2) \sqrt{\mathbb{P}_{f_j}\big(X_{[\ell_{j-1}+1:\ell_j]}^{(i)} \not\in B_j(t)|T^{(i)}=t\big)}\\
&\qquad+\log|\FF_0| \mathbb{P}_{f_j}\big(X_{[\ell_{j-1}+1:\ell_j]}^{(i)} \not\in B_j(t)|T^{(i)}=t\big)\Big)\\
&\qquad+ 2{C^2(C-1)^2}  I(X^{(i)};Y^{(i)}|T^{(i)}=t).
\end{align*}
Here $I(X^{(i)};Y^{(i)}|T^{(i)}=t)$ is the conditional mutual information between $X^{(i)}$ and $Y^{(i)}$ given $T^{(i)}=t$
in the Markov chain $F \to (T^{(i)},X^{(i)}) \to Y^{(i)}$, where $F$ has a 
uniform distribution on $\FF_0$ and  $X^{(i)} \given (F=f, T^{(i)}=t) \sim \mathbb{P}^{(i)}_{f}(\cdot|T^{(i)}=t)=P_{X^{(i)}|(F=f,T^{(i)}=t)}$. \end{theorem}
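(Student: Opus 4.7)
The plan is to reduce the assertion to Theorem \ref{thm: mutualUB:independent} by working pointwise in $t$, i.e.\ conditionally on $T^{(i)} = t$. Since $T^{(i)}$ is independent of $F$, conditioning on $T^{(i)} = t$ preserves the uniform distribution of $F$ on $\mathcal{F}_0$. The blocks $X_{[(\ell_{j-1}+1):\ell_j]}^{(i)}$ for $j=1,\ldots,d$ play the role of the $d$ coordinates $X_j$ in Theorem \ref{thm: mutualUB:independent}: by hypothesis, the conditional density $p(x \mid f, t)$ of a block depends on $f$ only through $f_j$, and $B_j(t)$ is exactly the conditional analogue of the likelihood-ratio set $B_j$ used there, with $\mathcal{F}_0$-wide likelihood ratios computed against the $t$-dependent kernel $p(\cdot \mid f_j, t)$.

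With that identification, I would follow the proof of Theorem \ref{thm: mutualUB:independent} step by step, but with every expression further conditioned on $T^{(i)} = t$. The chain rule for mutual information gives
\begin{align*}
I(F; Y^{(i)} \mid T^{(i)} = t) = \sum_{j=1}^d I\bigl(F_j; Y^{(i)} \mid F_{1:j-1}, T^{(i)} = t\bigr).
\end{align*}
For each $j$, mimicking the proof of Theorem \ref{theorem: h2} conditionally on $(F_{1:j-1}, T^{(i)} = t)$ splits the chain according to the indicator $E_j = \mathbf{1}\{X_{[\ell_{j-1}+1:\ell_j]}^{(i)} \in B_j(t)\}$: on $B_j(t)$, the bounded likelihood ratio yields the factor $2 C^2 (C-1)^2$ in front of the conditional mutual information between the block and $Y^{(i)}$; on its complement, the usual concavity of Bernoulli entropy produces the $\sqrt{\mathbb{P}_{f_j}(\cdot \notin B_j(t) \mid T^{(i)}=t)}$ term together with $\log|\mathcal{F}_0| \cdot \mathbb{P}_{f_j}(\cdot \notin B_j(t) \mid T^{(i)}=t)$.

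The main obstacle will be passing from the conditional mutual information terms $I(X_{[\ell_{j-1}+1:\ell_j]}^{(i)}; Y^{(i)} \mid F_{1:j-1}, T^{(i)} = t)$ to a single overall quantity bounded by $I(X^{(i)}; Y^{(i)} \mid T^{(i)} = t)$, since the $F_{1:j-1}$ conditioning is not directly amenable to the chain rule on the $X$-side. This is precisely the role of Lemma \ref{lem: mutual:combination}, and the conditional version required here goes through because the block-wise conditional independence structure is preserved upon conditioning on $T^{(i)} = t$: given $T^{(i)} = t$, each block $X_{[\ell_{j-1}+1:\ell_j]}^{(i)}$ depends on $F$ only through $F_j$ and the blocks are conditionally independent given $F$, so the same chain-rule-plus-nonnegativity manipulation yields $I(X_{[\ell_{j-1}+1:\ell_j]}^{(i)}; Y^{(i)} \mid F_{1:j-1}, T^{(i)} = t) \le I(X_{[\ell_{j-1}+1:\ell_j]}^{(i)}; Y^{(i)} \mid X_{1:\ell_{j-1}}^{(i)}, T^{(i)} = t)$. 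Summing in $j$ and applying the chain rule of mutual information to $I(X^{(i)}; Y^{(i)} \mid T^{(i)} = t)$ then delivers the claimed bound.
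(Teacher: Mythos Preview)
Your proposal is correct and follows exactly the intended approach: the paper states this theorem as the conditional-on-$T^{(i)}=t$ analogue of Theorem~\ref{thm: mutualUB:independent}, with the blocks $X_{[(\ell_{j-1}+1):\ell_j]}^{(i)}$ playing the role of the coordinates $X_j$, and the proof is obtained by running the argument of Theorem~\ref{thm: mutualUB:independent} (including the appeal to Lemma~\ref{lem: mutual:combination}) pointwise under the conditional law given $T^{(i)}=t$. The paper does not spell this out beyond the setup in the surrounding section, and your write-up fills in precisely those details.
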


\section{Definitions and notations for wavelets}\label{sec: wavelets}
In this section we give a brief introduction to wavelets. A more detailed and elaborate description of wavelets can be found for instance in \cite{hardle:2012, gine:nickl:2016}.

In our work we consider the Cohen, Daubechies and Vial construction of compactly supported, orthonormal, $N$-regular wavelet basis of $L_2[0,1]$, see for instance \cite{cohen:1993}. First for any $N\in\mathbb{N}$ one can follow Daubechies' construction of the father $\phi(.)$ and mother $\psi(.)$ wavelets with $N$ vanishing moments and bounded support on $[0,2N-1]$ and $[-N+1,N]$, respectively, see for instance \cite{daubechies:1992}. Then we obtain the basis functions
\begin{align*}
\big\{ \phi_{j_0m},\psi_{jk}:\, m\in\{0,...,2^{j_0}-1\},\quad j> j_0,\quad k\in\{0,...,2^{j}-1\} \big\},
\end{align*}
for some sufficiently large resolution level $j_0\geq0$. The basis functions (on $x\in[0,1]$) are given as $\psi_{jk}(x)=2^{j/2}\psi(2^jx-k)$, for $k\in [N-1,2^j-N]$, and $\phi_{j_0 k}(x)=2^{j_0}\phi(2^{j_0}x-m)$, for $m\in [0,2^{j_0}-2N]$, while for other values of $k$ and $m$, the basis functions are specially constructed, to form a basis with the required smoothness property. For convenience we introduce the notation $\psi_{j_0k}:=\phi_{j_0k}$ for $k=0,...,2^{j_0}-1$. This does not mean, however, that $\phi_{j_0k}(x)=2^{-1}\psi_{j_0+1, k}(2^{-1}x)$. Then the function $f\in L_2[0,1]$ can be represented in the form
\begin{align*}
f=\sum_{j=j_0}^{\infty}\sum_{k=0}^{2^{j}-1}f_{jk}\psi_{jk},
\end{align*}
with $f_{jk}=\langle f,\psi_{jk}\rangle$. Note that in view of the orthonormality of the wavelet basis the {$L_2$-norm} of the function $f$ is equal to
\begin{align*}
\|f\|_2^2=\sum_{j=j_0}^{\infty}\sum_{k=0}^{2^{j}-1}f_{jk}^2.
\end{align*}
For notational convenience we will take $j_0$ to be 0 in our paper, this can be done without loss of generality.

Next we introduce the Besov spaces we are considering in our analysis. Let us define the Besov (Sobolev-type) norm for $s\in(0,N)$ as
\begin{align*}
\|f\|_{B_{2,\infty}^s}^2=\sup_{j\geq j_0} 2^{2js}\sum_{k=0}^{2^j-1}f_{jk}^2.
\end{align*}
 Then the Besov space $B_{2,\infty}^s$ and Besov ball $B_{2,\infty}^s(L)$ of radius $L>0$ are defined as
\begin{align*}
B_{2,\infty}^s=\{f\in L_2[0,1]:\, \|f\|_{B_{2,\infty}^s}<\infty \},\quad\text{and}\quad B_{2,\infty}^s(L)=\{f\in L_2[0,1]:\, \|f\|_{B_{2,\infty}^s}<L \},
\end{align*}
respectively. We note that the present Besov space is larger than the standard Sobolev space where instead of the supremum one would take the sum over the resolution levels $j$. Then we introduce the Besov (H\"older-type) norm for $s\in(0,N)$ as
\begin{align*}
\|f\|_{B_{\infty,\infty}^s}= \sup_{j\geq j_0, k}\{ 2^{j(s+1/2)}|f_{jk}|\}.
\end{align*}
Then similarly to before we define the Besov space $B_{\infty,\infty}^s$ and Besov ball $B_{\infty,\infty}^s(L)$ of radius $L>0$ as
\begin{align*}
&B_{\infty,\infty}^s=\{f\in L_2[0,1]:\, \|f\|_{B_{\infty,\infty}^s}<\infty \}\quad\text{and}\\
&B_{\infty,\infty}^s(L)=\{f\in L_2[0,1]:\, \|f\|_{B_{\infty,\infty}^s}<L \},
\end{align*}
respectively. For $s\neq N$ these spaces are equivalent to the classical H\"older spaces, while for integer $s$ they are equivalent to the so called Zygmond spaces, see \cite{cohen:1993}.

\section{Collection of technical lemmas}
In this section we give a collection of technical lemmas from the literature, we have used in the paper, for easier reference. 

\begin{lemma}[Lemma 5 of \cite{lacour:08}.]\label{lem4:chagny}
Let $\zeta_1,...,\zeta_n$ be iid random variables and define $\nu_n(r)=\sum_{i=1}^n r(\zeta_i)-E r(\zeta_i)$, where $r$ belongs to a countable class $\mathcal{R}$ of real valued measurable functions. Then for $\eps>0$, 
\begin{align*}
E\Big[ \sup_{r\in\mathcal{R}}\nu_n(r)^2-2(1+2\eps)H^2\Big]_+\leq \frac{4}{K_1} \Big\{ \frac{v}{n}e^{-\frac{K_1\eps nH^2}{v}}+\frac{49 M_1^2}{K_1 C^2(\eps)n^2}e^{-\frac{\sqrt{2\eps}K_1C(\eps)nH}{7M_1}} \Big\},
\end{align*}
where $K_1=1/6$, $C(\eps)=(\sqrt{\eps+1}-1)\wedge 1$,
\begin{align*}
\sup_{r\in\mathcal{R}}\|r\|_\infty\leq M_1,\quad E\sup_{r\in\mathcal{R}}|\nu_n(r)|\leq H,\quad\text{and}\quad \sup_{r\in\mathcal{R}}V(r(\zeta_1))\leq v.
\end{align*}
\end{lemma}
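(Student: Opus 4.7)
The plan is to derive this Talagrand-type moment inequality from Bousquet's concentration inequality for the supremum of a centered empirical process, combined with an integration-by-parts conversion of tail bounds into the expectation of the positive part. Setting $Z := \sup_{r\in\mathcal{R}} |\nu_n(r)|$, one has $\sup_{r \in \mathcal{R}} \nu_n(r)^2 \le Z^2$ (with equality once $\mathcal{R}$ is symmetrized by closure under $r \mapsto -r$, which preserves the three hypotheses), so the task reduces to controlling $E[Z^2 - 2(1+2\epsilon) H^2]_+$.

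First I would invoke Bousquet's version of Talagrand's inequality, which under $\|r\|_\infty \le M_1$, $V(r(\zeta_1)) \le v$ and $E Z \le H$ yields, for every $x>0$,
\[
P\!\left(Z \ge H + \sqrt{2x(nv + 2 M_1 H)} + \tfrac{M_1 x}{3}\right) \le e^{-x}.
\]
Splitting the square root via $\sqrt{a+b} \le \sqrt{a}+\sqrt{b}$ and then using a Young-type inequality $2\sqrt{M_1 H x} \le \alpha H + M_1 x/\alpha$, with $\alpha$ tuned through the quantity $C(\epsilon) = (\sqrt{1+\epsilon}-1)\wedge 1$, rearranges this into a cleaner form
\[
P\!\left(Z \ge \sqrt{1+\epsilon}\, H + \sqrt{2nvx} + \kappa(\epsilon) M_1 x\right) \le e^{-x},
\]
with $\kappa(\epsilon)$ of order $1/C(\epsilon)$. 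I would then square using $(a+b)^2 \le (1+\epsilon)a^2 + (1+1/\epsilon)b^2$, so that the event $\{Z^2 \ge 2(1+2\epsilon)H^2 + s\}$ forces the deterministic excess $\sqrt{2nvx}+\kappa(\epsilon)M_1 x$ to exceed an explicit threshold $u(s)$ of order $\sqrt{\epsilon H^2 + s}$.

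Inverting this threshold condition in the two regimes where either the Gaussian term $\sqrt{2nvx}$ or the linear term $\kappa(\epsilon) M_1 x$ is dominant, I would obtain
\[
P\!\left(Z^2 \ge 2(1+2\epsilon)H^2 + s\right) \le \exp\!\Big(-\tfrac{K_1 \epsilon n (H^2+s)}{v}\Big) + \exp\!\Big(-\tfrac{K_1 C(\epsilon) n \sqrt{\epsilon H^2 + s}}{M_1}\Big).
\]
The final step is to write $E[Z^2 - 2(1+2\epsilon)H^2]_+ = \int_0^\infty P(Z^2 - 2(1+2\epsilon)H^2 \ge s)\,ds$ and integrate the two exponentials: the Gaussian part integrates to order $(v/n)\,e^{-K_1 \epsilon n H^2/v}$, and after the substitution $u=\sqrt{\epsilon H^2 + s}$ the sub-exponential part integrates to order $(M_1^2/(C(\epsilon)^2 n^2))$ times $e^{-\sqrt{2\epsilon}\, K_1 C(\epsilon) n H/(7 M_1)}$. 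The main obstacle is not conceptual but the careful bookkeeping of constants: tracing the slack $\epsilon$ through the Young split, through Bousquet's inequality, and through the two integrations, so that the numerical constants $K_1 = 1/6$, the factor $49$ appearing in front of the second term, and the exact exponents involving $C(\epsilon)$ and $\sqrt{2\epsilon}$ emerge precisely as stated.
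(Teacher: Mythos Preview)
The paper does not prove this lemma: it is stated as Lemma~5 of \cite{lacour:08} in the ``Collection of technical lemmas'' section and simply cited from the literature without proof, so there is no paper proof to compare your attempt against. Your outline via Bousquet's concentration inequality followed by integration of the resulting tail bound is precisely the standard route by which such Talagrand-type moment inequalities are established (in Lacour's paper the proof is in fact attributed further back to Klein--Rio and Birg\'e--Massart). One caveat on the statement rather than your argument: the definition $\nu_n(r)=\sum_{i=1}^n r(\zeta_i)-E r(\zeta_i)$ as written (an unnormalized sum) is inconsistent with the $v/n$ and $M_1^2/n^2$ scaling in the bound; the intended normalization is $\nu_n(r)=\frac{1}{n}\sum_{i=1}^n \big(r(\zeta_i)-E r(\zeta_i)\big)$, which is also how the lemma is actually applied in the paper. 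With that normalization your sketch is sound, the only nontrivial labor being, as you say, the constant-tracking.
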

Note that standard density arguments allow to apply the above lemma to the unit sphere of a finite dimensional linear space.

\begin{lemma}[Theorem 4.1.9 of \cite{gine:nickl:2016}]\label{lem:chisquare}
For $\zeta_1,...,\zeta_n\stackrel{iid}{\sim}N(0,1)$ and $\lambda_1,...,\lambda_n\in\mathbb{R}$ we have that
\begin{align*}
P\big( \sum_{i=1}^n \lambda_i^2 (\zeta_i^2-1) \geq t\big)\leq\exp\Big\{ -\frac{t^2}{4(\sum_{i}\lambda_i^2+t\max|\lambda_i|)} \Big\}.
\end{align*}
\end{lemma}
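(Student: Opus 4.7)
My plan is to prove the lemma by the classical Chernoff-plus-moment-generating-function route, which is the standard path to Bernstein-type concentration for sub-exponential variables.

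The first step is to control the log-MGF of a single centered chi-square. For $\zeta\sim N(0,1)$ and $s\in[0,1/2)$, a direct Gaussian integral gives $E[e^{s(\zeta^2-1)}]=e^{-s}(1-2s)^{-1/2}$, whose logarithm $-\tfrac12\log(1-2s)-s$ expands as $\sum_{k\ge 2}(2s)^k/(2k)$. Bounding $(2s)^k/(2k)\le (2s)^k/4$ for $k\ge 2$ and summing the resulting geometric series yields the clean sub-exponential estimate
\begin{equation*}
\log E[e^{s(\zeta^2-1)}]\le \frac{s^2}{1-2s},\qquad s\in[0,1/2).
\end{equation*}

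Next, by independence of the $\zeta_i$, applying the single-variable bound with $s$ replaced by $s\lambda_i^2$ (valid whenever $2s\max_i\lambda_i^2<1$) gives
\begin{equation*}
\log E\!\left[\exp\!\Big(s\sum_{i=1}^n\lambda_i^2(\zeta_i^2-1)\Big)\right]\le \sum_{i=1}^n\frac{s^2\lambda_i^4}{1-2s\lambda_i^2}\le \frac{s^2(\max_i\lambda_i^2)\sum_i\lambda_i^2}{1-2s\max_i\lambda_i^2},
\end{equation*}
using $\lambda_i^4\le\lambda_i^2\max_j\lambda_j^2$ and the monotonicity of $x\mapsto 1/(1-x)$ in the denominator.

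Finally, Markov's inequality gives $P(X\ge t)\le \exp(-st+\log E[e^{sX}])$ for $X=\sum_i\lambda_i^2(\zeta_i^2-1)$, and it remains to optimise $s$. Taking $s=t\bigl/\bigl(2(\max_i\lambda_i^2)(\sum_i\lambda_i^2)+2t\max_i\lambda_i^2\bigr)$ one checks $2s\max_i\lambda_i^2<1$ and that the two exponential terms combine (by the standard Bernstein algebra $-st+s^2\sigma^2/(1-2sc)=-t^2/(4(\sigma^2+ct))$ at the stated $s$) to produce the claimed bound
\begin{equation*}
P(X\ge t)\le \exp\!\Big\{-\frac{t^2}{4(\sum_i\lambda_i^2+t\max_i|\lambda_i|)}\Big\}.
\end{equation*}
There is no real obstacle: the only mildly delicate point is the algebraic step that produces the correct constants in the denominator when the optimal $s$ is substituted back, and making sure the chosen $s$ lies in the admissible range $[0,1/(2\max_i\lambda_i^2))$, which is immediate from the definition.
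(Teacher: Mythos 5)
The paper itself gives no argument for this lemma: it is listed among the technical results quoted from the literature (Theorem 4.1.9 of \cite{gine:nickl:2016}), so there is no in-paper proof to compare against, and a self-contained Chernoff--Bernstein derivation like yours is a reasonable thing to attempt. Your first two steps are correct: $\log E e^{s(\zeta^2-1)}\le s^2/(1-2s)$ for $s\in[0,1/2)$, and hence, for $2s\max_i\lambda_i^2<1$, the log-MGF of $X=\sum_i\lambda_i^2(\zeta_i^2-1)$ is at most $s^2\sigma^2/(1-2sc)$ with $\sigma^2=(\max_i\lambda_i^2)(\sum_i\lambda_i^2)$ (or the sharper $\sum_i\lambda_i^4$) and $c=\max_i\lambda_i^2$. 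The Bernstein algebra you invoke is also correct: with $s=t/(2(\sigma^2+ct))$ one gets $P(X\ge t)\le\exp\{-t^2/(4(\sigma^2+ct))\}$.

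The gap is in the very last line, the step you yourself flag as the delicate one: what your optimization yields is
\begin{equation*}
P\Big(\sum_i\lambda_i^2(\zeta_i^2-1)\ge t\Big)\le\exp\Big\{-\frac{t^2}{4\big((\max_i\lambda_i^2)(\sum_i\lambda_i^2)+t\max_i\lambda_i^2\big)}\Big\},
\end{equation*}
whereas the lemma asserts the denominator $4(\sum_i\lambda_i^2+t\max_i|\lambda_i|)$. These are not the same; your bound implies the stated one only when $\max_i|\lambda_i|\le1$ (so that $\max_i\lambda_i^2\sum_i\lambda_i^2\le\sum_i\lambda_i^2$ and $t\max_i\lambda_i^2\le t\max_i|\lambda_i|$), and no choice of $s$ can repair this in general: for arbitrary real $\lambda_i$ the printed inequality is in fact false (take $n=1$, $\lambda$ large and $t=\lambda^2$; the left side is $P(\zeta^2\ge2)\approx0.16$ while the right side is $\exp\{-\lambda^2/(4(1+\lambda))\}\to0$). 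In other words, the statement as printed carries a typo --- the correct Laurent--Massart/Bernstein form for weights $\lambda_i^2$ has $\sum_i\lambda_i^4$ and $\max_i\lambda_i^2$ in the denominator, which is exactly what your argument proves, and which is all that is needed where the lemma is used (in the proof of Lemma \ref{lem: Lem:8.2.1redo} the eigenvalues are of order $n_r^{-1}$, hence bounded by $1$). So your proof is essentially sound, but as a proof of the displayed claim it is incomplete: you must either add the hypothesis $\max_i|\lambda_i|\le1$ and note the comparison of denominators explicitly, or restate the conclusion with $\sum_i\lambda_i^4$ and $\max_i\lambda_i^2$.
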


The next lemma is Gershgorin circle theorem \cite{gershgorin:1931}.
\begin{lemma}\label{lem:Gershgorin}
Every eigenvalue of a matrix $\mathbb{R}^{n\times n}$ satisfies
\begin{align*}
|\lambda-A_{ii}|\leq \sum_{j\neq i}A_{ij}, \quad i\in\{1,...,n\}.
\end{align*}
\end{lemma}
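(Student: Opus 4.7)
The plan is to give the classical one-line argument via the eigenvector's largest coordinate. Let $\lambda$ be any eigenvalue of $A$ with a corresponding eigenvector $v \neq 0$, and pick an index $i \in \{1,\ldots,n\}$ such that $|v_i| = \max_{k} |v_k|$; note $|v_i| > 0$ since $v \neq 0$.

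From the eigenvalue relation $Av = \lambda v$, reading off the $i$-th coordinate gives $\sum_{j} A_{ij} v_j = \lambda v_i$, which I would rewrite as
\begin{equation*}
(\lambda - A_{ii}) v_i = \sum_{j \neq i} A_{ij} v_j.
\end{equation*}
Dividing by $v_i$ (which is nonzero) and applying the triangle inequality yields
\begin{equation*}
|\lambda - A_{ii}| \leq \sum_{j \neq i} |A_{ij}| \cdot \frac{|v_j|}{|v_i|} \leq \sum_{j \neq i} |A_{ij}|,
\end{equation*}
where the last bound uses the maximality of $|v_i|$ among the $|v_k|$.

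There is no real obstacle here: the result is entirely elementary and the choice of the index $i$ at which $|v_i|$ is maximized is the only ingredient. The only thing worth remarking is that the inequality is claimed for \emph{every} eigenvalue $\lambda$, but the index $i$ realizing the bound may depend on $\lambda$; this is harmless since the statement only asserts existence of some such $i$ for each $\lambda$. If one wanted a uniform bound for the application in the main text (bounding the spectrum of the banded covariance matrix $\Sigma_T$), one simply takes the maximum over $i$ of the right-hand side, which yields the operator-norm-type bound $|\lambda| \leq \max_i \sum_j |A_{ij}|$ used in the proof of Lemma \ref{lem: Lem:8.2.1redo}.
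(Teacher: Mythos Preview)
Your argument is the standard, correct proof of Gershgorin's circle theorem. The paper itself does not supply a proof at all; the lemma is merely quoted from the literature with a citation to \cite{gershgorin:1931}, so there is nothing to compare against beyond noting that your proof is the classical one. Your parenthetical remark that the index $i$ depends on $\lambda$ (so the correct reading of the statement is ``for each eigenvalue there exists some $i$'') and your observation about the absolute values $|A_{ij}|$ are both on point and in fact clarify minor infelicities in the lemma's phrasing.
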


For convenience we also recall Lemma 5.3 of \cite{chagny:2013}. Let $S(m)=\{\sum_{j=1}^{D_m}b_{j}\phi_j:\, \sum_{j=1}^{D_m}b_{j}^2=1 \}$ denote the unite sphere in the linear subspace spanned by the basis functions $\phi_{j}$, $j=1,...,D_m$. 

\begin{lemma}\label{lem:5.3}
Let $\nu: L^{2}[0,1]\mapsto \mathbb{R}$ be a linear functional. Then 
\begin{align*}
\sup_{g\in S(m)}\nu(g)^2= \sum_{j=1}^{D_m} \nu(\phi_j)^2.
\end{align*}
\end{lemma}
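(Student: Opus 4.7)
The plan is to reduce the supremum over the unit sphere $S(m)$ to a finite-dimensional optimization via the coordinate vector $b = (b_1,\dots,b_{D_m})$, and then recognize the problem as maximizing a linear functional on the Euclidean unit sphere, which is solved by Cauchy--Schwarz with a known extremizer.

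First I would use linearity of $\nu$ to write, for any $g = \sum_{j=1}^{D_m} b_j \phi_j \in S(m)$, the identity $\nu(g) = \sum_{j=1}^{D_m} b_j \nu(\phi_j)$. Squaring and applying the Cauchy--Schwarz inequality in $\mathbb{R}^{D_m}$ yields $\nu(g)^2 \le \bigl(\sum_j b_j^2\bigr)\bigl(\sum_j \nu(\phi_j)^2\bigr) = \sum_{j=1}^{D_m} \nu(\phi_j)^2$, where the last equality uses the constraint $\sum_j b_j^2 = 1$ defining $S(m)$. This gives the upper bound $\sup_{g\in S(m)}\nu(g)^2 \le \sum_{j=1}^{D_m}\nu(\phi_j)^2$.

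For the matching lower bound, I would exhibit an explicit extremizer. If $\sum_j \nu(\phi_j)^2 = 0$ both sides vanish trivially. Otherwise set $b_j^{*} := \nu(\phi_j)/\bigl(\sum_k \nu(\phi_k)^2\bigr)^{1/2}$ and $g^{*} := \sum_{j=1}^{D_m} b_j^{*} \phi_j$. Then $\sum_j (b_j^{*})^2 = 1$, so $g^{*}\in S(m)$, and a direct calculation gives $\nu(g^{*}) = \sum_j b_j^{*} \nu(\phi_j) = \bigl(\sum_j \nu(\phi_j)^2\bigr)^{1/2}$, hence $\nu(g^{*})^2 = \sum_{j=1}^{D_m} \nu(\phi_j)^2$, which establishes the reverse inequality and therefore equality.

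There is essentially no obstacle here: the statement is a one-line consequence of Cauchy--Schwarz once one expands in the orthonormal basis, and the only thing to be careful about is to note that the constraint set in $S(m)$ is defined via the $\ell^2$ norm of the coefficient vector (which matches the $L^2[0,1]$ norm of $g$ precisely because $\{\phi_j\}$ is orthonormal in $L^2[0,1]$; this orthonormality is implicit in the statement through the indexing, and I would state it explicitly at the outset for clarity).
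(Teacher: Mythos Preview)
Your proof is correct and complete: the identity is exactly the Cauchy--Schwarz equality case for the linear form $b \mapsto \sum_j b_j \nu(\phi_j)$ on the Euclidean unit sphere, and you handle both directions cleanly, including the degenerate case. The paper does not give its own proof of this lemma but simply recalls it from \cite{chagny:2013}; your argument is the standard one and matches what any proof of this fact would do.
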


Finally we give a version of Bernstein's inequality, given for instance in Proposition 2.9 of \cite{massart2007concentration}.
\begin{lemma}\label{lem: Bernstein}[Bernstein's inequality]
Let $X_1, ..., X_n$ be independent real valued random variables. Assume that there exist some positive numbers
$v$ and $c$ such that $\sum_{i=1}^n EX_i^2\leq v$ and for all integers $k\geq 3$
\begin{align*}
\sum_{i=1}^n E (X_i)_+^k\leq (k!/2)v c^{k-2}.
\end{align*}
Then 
\begin{align*}
P(\sum_{i=1}^n (X_i-EX_i) \geq \sqrt{2vx}+cx)\leq e^{-x}.
\end{align*}
\end{lemma}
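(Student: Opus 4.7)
The plan is to apply the classical Cram\'er--Chernoff method. Writing $S_n = \sum_{i=1}^n (X_i - EX_i)$, Markov's inequality applied to $e^{\lambda S_n}$ for $\lambda > 0$ yields, by independence,
\[
P(S_n \geq t) \leq e^{-\lambda t}\prod_{i=1}^n E[e^{\lambda (X_i - EX_i)}],
\]
so the task reduces to bounding each centered moment generating function in terms of $v$ and $c$, and then optimising in $\lambda$.

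The crucial step is the sub-gamma bound
\[
\log E\big[e^{\lambda(X_i - EX_i)}\big] \leq \frac{\lambda^2 EX_i^2}{2} + \sum_{k\ge 3}\frac{\lambda^k E(X_i)_+^k}{k!}, \qquad \lambda>0.
\]
I would prove this by splitting the pointwise bound on $e^{\lambda X_i}$ according to the sign of $X_i$: on $\{X_i\le 0\}$ the elementary inequality $e^y \le 1 + y + y^2/2$ (valid for $y\le 0$) applies, while on $\{X_i > 0\}$ the full Taylor expansion of $e^{\lambda X_i}$ can be written with $X_i = (X_i)_+$. Taking expectations and using that $(X_i)_+^k$ vanishes on $\{X_i\le 0\}$ gives
\[
E[e^{\lambda X_i}] \le 1 + \lambda EX_i + \frac{\lambda^2 EX_i^2}{2} + \sum_{k\ge 3}\frac{\lambda^k E(X_i)_+^k}{k!}.
\]
The identity $\log E[e^{\lambda(X_i-EX_i)}] = \log E[e^{\lambda X_i}] - \lambda EX_i$ together with $\log(1+u)\le u$ then yields the displayed bound. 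Summing over $i$, plugging in both moment hypotheses, and recognising the geometric series $\sum_{k\ge 3}(\lambda c)^{k-2} = \lambda c/(1-\lambda c)$ produces the cumulant bound
\[
\log E[e^{\lambda S_n}] \le \frac{\lambda^2 v/2}{1-\lambda c}, \qquad 0<\lambda<1/c.
\]

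It then remains to optimise in $\lambda$. For $t=\sqrt{2vx}+cx$, setting $\beta = \sqrt{2vx}$ and choosing $\lambda^\star = \beta/(v + c\beta) \in (0, 1/c)$, a direct algebraic computation gives $1 - \lambda^\star c = v/(v+c\beta)$, whence
\[
\lambda^\star t - \frac{(\lambda^\star)^2 v/2}{1-\lambda^\star c} = \frac{\beta^2(2v+c\beta)}{2v(v+c\beta)} - \frac{\beta^2}{2(v+c\beta)} = \frac{\beta^2}{2v} = x.
\]
Combined with Chernoff's inequality, this delivers $P(S_n \ge \sqrt{2vx}+cx) \le e^{-x}$, as required.

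The main obstacle will be the sub-gamma moment bound: since the Bernstein condition is posed on the one-sided moments $E(X_i)_+^k$ rather than on $E|X_i|^k$, the familiar route of bounding $E[(X_i-EX_i)^k]$ by absolute moments is unavailable (indeed for even $k$ one can have $E|X_i|^k \gg E(X_i)_+^k$). The split-by-sign trick circumvents this by absorbing the negative part of $X_i$ into the second-moment contribution---already paid for by the variance budget $v$---while invoking the one-sided hypothesis only on $\{X_i>0\}$, where $(X_i)_+$ and $X_i$ coincide.
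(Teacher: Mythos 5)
Your proof is correct. Note that the paper does not prove this lemma at all: it is quoted verbatim, with a citation to Proposition 2.9 of \cite{massart2007concentration}, in the collection of technical lemmas, so there is no in-paper argument to compare against. Your Cram\'er--Chernoff derivation --- the sign-split pointwise bound on $e^{\lambda X_i}$ that invokes the one-sided moments $E(X_i)_+^k$ only on $\{X_i>0\}$, the geometric-series summation giving $\log E e^{\lambda S_n}\le \lambda^2 v/\bigl(2(1-\lambda c)\bigr)$ for $0<\lambda<1/c$, and the choice $\lambda^\star=\sqrt{2vx}/(v+c\sqrt{2vx})$ which makes the exponent exactly $-x$ at $t=\sqrt{2vx}+cx$ --- is the standard textbook proof of this one-sided Bernstein inequality, and all of your computations check out.
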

\end{document}